\documentclass[11pt]{article}

\newif\ifarxiv
\arxivtrue

\ifarxiv
\usepackage{charter}
\usepackage{arxiv}
\fi

\usepackage[T1]{fontenc}
\usepackage[utf8]{inputenc}
\usepackage[english]{babel}
\usepackage{graphicx} 
\usepackage{wrapfig}
\usepackage{tabu}	

\usepackage{amsmath}
\usepackage{amsfonts}
\usepackage{amssymb}
\usepackage{amsthm}
\usepackage{mathrsfs}
\usepackage{stmaryrd}
\SetSymbolFont{stmry}{bold}{U}{stmry}{m}{n}
\usepackage[mathscr]{eucal}
\usepackage{dsfont}

\usepackage{algorithm}
\usepackage{algpseudocode}




\usepackage{graphicx}
\usepackage[dvipsnames]{xcolor}
\usepackage{hyperref}
\usepackage{subcaption}
\usepackage[nameinlink,noabbrev,capitalise]{cleveref}
\crefname{equation}{}{}
\Crefname{appendix}{Appendix}{Appendices}

\usepackage{enumerate}
\usepackage{enumitem}
\usepackage{fourier}
\usepackage{natbib}
\usepackage[many]{tcolorbox}
\tcbuselibrary{breakable, skins}

\usepackage{xspace}

\newcommand\R{\mathbb{R}}
\newcommand\Q{\mathbb{Q}}
\newcommand\E{\mathbb{E}}
\newcommand\N{\mathbb{N}}
\newcommand\Z{\mathbb{Z}}
\newcommand\Proba{\mathbb{P}}    

\newcommand\bfx{\mathbf{x}}
\newcommand\bfL{\mathbf{L}}

\newcommand\eqdef{\overset{\text{\tiny def\normalsize}}{=}}		
\newcommand{\normal}[2]{\ensuremath{\mathcal{N}\hspace*{-1pt}\left({#1},{#2}\right)}}			
\newcommand{\complexity}[1]{\ensuremath{\pmb{\mathcal{O}}\hspace*{-1pt}\left({#1}\right)}}		
\newcommand\eg{\emph{e.g.}\xspace}
\newcommand\ie{\emph{i.e.}\xspace}

\newcommand{\norm}[1]{\left\| #1 \right\|}

\DeclareMathOperator{\diam}{diam}  		
\DeclareMathOperator{\argmin}{argmin} 	
\DeclareMathOperator{\minimize}{minimize}

\newcommand*{\Quotient}[2]{\ensuremath{\raisebox{.2ex}{\ensuremath{#1}}\big/\!\raisebox{-.65ex}{\,\ensuremath{#2}}}}				

\newcommand\Xset{\mathscr{X}\xspace}		
\newcommand\bfpi{\boldsymbol\pi}			

\theoremstyle{plain}
\newtheorem{theorem}{Theorem}
\newtheorem{corollary}{Corollary}
\newtheorem{lemma}{Lemma}
\newtheorem{proposition}{Proposition}

\theoremstyle{definition}
\newtheorem{assumption}{Assumption}
\newtheorem{definition}{Definition}

\theoremstyle{remark}
\newtheorem{remark}{Remark}
\newtheorem{example}{Example}

\crefname{theorem}{theorem}{theorems}
\crefname{corollary}{corollary}{corollaries}
\crefname{lemma}{lemma}{lemmas}
\crefname{proposition}{proposition}{propositions}
\crefname{conjecture}{conjecture}{conjectures}
\crefname{claim}{claim}{claims}
\crefname{assumption}{assumption}{assumptions}
\crefname{definition}{definition}{definitions}
\crefname{remark}{remark}{remarks}
\crefname{example}{example}{examples}
\crefname{notation}{notation}{notations}

\newtcolorbox{Corner}{
  enhanced,
  breakable,
  colback=white,
  colframe=white,
  frame hidden, 
  grow to left by=-1em,
  boxsep=0pt,
  left=0pt, right=0pt,
  top=0pt, bottom=0pt,
  before skip=1.5em,
  after skip=1.5em
}

\newenvironment{Theorem}{\begin{Corner}\begin{theorem}}{\end{theorem}\end{Corner}}
\newenvironment{Corollary}{\begin{Corner}\begin{corollary}}{\end{corollary}\end{Corner}}
\newenvironment{Lemma}{\begin{Corner}\begin{lemma}}{\end{lemma}\end{Corner}}
\newenvironment{Proposition}{\begin{Corner}\begin{proposition}}{\end{proposition}\end{Corner}}

\newenvironment{Assumption}{\begin{Corner}\begin{assumption}}{\end{assumption}\end{Corner}}
\newenvironment{Remark}{\begin{Corner}\begin{remark}}{\end{remark}\end{Corner}}
\newenvironment{Example}{\begin{Corner}\begin{example}}{\hfill$\blacksquare$\end{example}\end{Corner}}

\definecolor{citationcolor}{RGB}{0,0,164}
\let\oldcite\cite
\renewcommand*\cite[1]{~\textcolor{citationcolor}{\oldcite{#1}}}
\let\oldcitet\citet
\renewcommand*\citet[1]{~\textcolor{citationcolor}{\oldcitet{#1}}}
\let\oldcitep\citep
\renewcommand*\citep[1]{~\textcolor{citationcolor}{\oldcitep{#1}}}

\newcommand\ST{Quadratic Minimum Spanning Tree\xspace}
\newcommand\TA{Traffic Assignment\xspace}

\title{First-Order Methods for\\Wasserstein Distributionally Robust Constrained Optimization}
\author{Hubert Villuendas\\
Université Grenoble Alpes, Inria, CNRS, LIG, LJK\\
\texttt{hubert.villuendas@univ-grenoble-alpes.fr}\\
\And
Mathieu Besan\c{c}on\\
Université Grenoble Alpes, Inria, CNRS, LIG\\
\texttt{mathieu.besancon@inria.fr}
\And
Jérôme Malick\\
Université Grenoble Alpes, CNRS, LJK\\
\texttt{jerome.malick@univ-grenoble-alpes.fr}
}
\date{}
\begin{document}

\maketitle
\title{First-Order Methods for Wasserstein Distributionally Robust Constrained Optimization}  

\begin{abstract}
We consider constrained problems in which input data are affected by errors. In such settings, Wasserstein distributionally robust optimization provides a principled framework to mitigate model risk by optimizing against worst-case data distributions within Wasserstein ambiguity sets.
However, the numerical resolution of the resulting problems remains challenging, especially in constrained settings.
In this paper, we provide a general, practical way to solve Wasserstein distributionally robust formulations in the presence of constraints. 
Our approach only requires a linear minimization oracle for the feasible set, and combines two key ingredients: (i) an entropic regularization of the distributionally robust value function, which makes it possible to compute stochastic gradient estimators, and (ii) a stochastic Frank–Wolfe algorithm, which minimizes the regularized robust objective while naturally handling constraints.
We illustrate the method, its tractability, and its interests against empirical risk minimization, on two problems: the traffic assignment and the minimum quadratic spanning tree.
\end{abstract}

\textbf{Keywords: }{Distributionally robust optimization, data-driven optimization, conditional gradients, constrained optimization}

\section{Introduction: decision, uncertainty, and examples}

\subsection{Context}

We consider a general class of data-driven constrained optimization problems of the form
\begin{equation}
\min_{\bfx\in\Xset}\quad\E_{\xi}\left[f(\bfx,\xi)\right]\label{eq: original stochastic problem}
\end{equation}
where $\bfx$ is the decision variable lying in the feasible convex set
$\Xset\subseteq\R^n$ and $\xi$ is an uncertain scenario lying in a set
of possible scenarios $\Xi\subseteq\R^d$.
We assume that the distribution of $\xi$ is unknown, and that we only have access to training data
$\widehat{\xi}_1,\dots,\widehat{\xi}_N \in \Xi$.
The standard data-driven approach consists in replacing the unknown distribution by
the empirical distribution
$\widehat{\Proba}_N\eqdef(\delta_{\widehat{\xi}_1}+\dots+\delta_{\widehat{\xi}_N})/N$.
This leads to Empirical Risk Minimization (ERM) which minimizes the average loss on the training dataset \citep{shapiro2021lectures,kleywegt2002sample}:
\begin{equation}
\min_{\bfx\in\Xset}\quad\dfrac{1}{N}\sum_{k=1}^N f(\bfx,\widehat{\xi}_k).\tag{ERM}\label{eq: ERM}
\end{equation}
While simple and widely used, ERM may produce over-optimistic decisions that overfit the observed scenarios and perform poorly on new data, especially when the training samples are limited, noisy, or unrepresentative (see \eg \citep{duchi2019variance,mohajerin2018data}). Distributionally robust optimization addresses this limitation by replacing the single empirical distribution with a worst-case distribution chosen in an ambiguity set around the data. A particularly appealing way to define such an ambiguity set is through optimal transport. Given a ground cost function $c:\Xi\times\Xi\rightarrow\R_+$, the associated Wasserstein distance between two
probability distributions $\Q_1$ and $\Q_2$ on $\Xi$ is defined by
\begin{equation}
W_c\left(\Q_1,\Q_2\right)\eqdef
\inf_{\substack{\pi\,\in\,\mathrm{Prob}\left(\Xi\times\Xi\right)\\
[\pi]_1=\Q_1,\; [\pi]_2=\Q_2}}
\int_{\Xi\times\Xi}c\left(\xi,\xi'\right)\mathrm{d}\pi\left(\xi,\xi'\right),
\label{eq: wasserstein distance}
\end{equation}
where $[\pi]_1$ and $[\pi]_2$ denote the two marginals of the transport plan $\pi$; see \eg \citep{COTFNT}. This leads to Wasserstein distributionally robust optimization (WDRO)
\begin{equation}
\min_{\bfx\,\in\,\Xset}
\sup_{\substack{\Q\,\in\,\mathrm{Prob}\left(\Xi\right)\\
W_c\left(\Q,\widehat{\Proba}_N\right)\leq\varrho}}
\E_{\zeta\sim\Q}\left[f(\bfx,\zeta)\right].
\tag{WDRO}\label{eq: WDRO primal form}
\end{equation}
Here, $\varrho\geq 0$ is the radius of the ambiguity set. This formulation combines expressive modeling flexibility (see \eg\citep{mohajerin2018data}) and statistical guarantees (see \eg\citep{le2025universal}).
The inner supremum in \Cref{eq: WDRO primal form} is an optimization problem over an infinite-dimensional space of probability distributions, and is generally
intractable in its primal form. Under mild assumptions, however, duality yields the reformulation \citep{mohajerin2018data,blanchet2019quantifying,gao2023distributionally}
\begin{equation}
\min_{\bfx\,\in\,\Xset}\inf_{\lambda\geq 0}
\lambda\varrho+
\E_{\widehat{\xi}\sim\widehat{\Proba}_N}\left[
\sup_{\zeta\,\in\,\Xi}
\left\lbrace f(\bfx,\zeta)-\lambda c\left(\widehat{\xi},\zeta\right)\right\rbrace
\right],
\label{eq: WDRO with duality}
\end{equation}
where $\lambda$ is the dual variable associated with the constraint
$W_c\left(\Q,\widehat{\Proba}_N\right)\leq\varrho$. This dual reformulation nicely removes the optimization over probability distributions, but the supremum over $\zeta$ typically induces the non-smoothness of the objective function. Thus, in general, solving \Cref{eq: WDRO with duality} is challenging, and existing solution approaches typically rely on additional structural assumptions on both $f$ and $\Xi$: indeed, in some specific situations, the supremum leads to convex optimization \citep{mohajerin2018data}  or sometimes admits a closed-form reformulation (see an example in \Cref{app: exact vs smoothed wdro}), but in general the inner sup cannot be easily tackled.

When the constraint set $\Xset$ is difficult to handle (\eg no compact algebraic representation or no easy-to-compute projection), the tractability of \Cref{eq: WDRO with duality} is even more complicated. Existing distributionally robust approaches for such constrained problems are therefore mostly problem-specific: see \eg  applications for vehicle routing\citep{ghosal2020distributionally},
facility location\citep{basciftci2021distributionally}, transit resource allocation\citep{sun2023distributionally}, or surgery scheduling \citep{chow2022target}. 
Among these, \citet{ghosal2020distributionally} uses a Benders decomposition, where the supremum is computed over a set of critical scenarios iteratively updated by a separation oracle that identifies worst-case situations. Another example, \citet{zhen2025unified} uses a standard Lagrangian reformulation applied to the inner maximization problem to yield a single minimization program. 

The goal of our work is to propose a \emph{general} method for Wasserstein distributionally robust constrained optimization, applicable to any problem with only a gradient oracle for the loss function and a linear minimization oracle for the feasible set. This requirement is satisfied for many important feasible sets (\eg flow polytopes, matching polytopes, matroid polytopes, the Birkhoff polytope, spectrahedra), opening many applications in operations research (network routing, matching, matrix completion, or submodular optimization); see the survey of \citet{braun2025conditionalgradientmethods}. In this paper, we illustrate our approach with the following two running examples.

\begin{Example}[\TA -- see \eg \citet{patriksson2015traffic}]
On a transportation network with origin-destination travel demands, the objective 
is to determine the flow on each arc while accounting for congestion. The mathematical formulation is described in \Cref{eq: formulation of the TA}, and its linear minimization oracle is described in \Cref{ex: LMO for TA}.
\end{Example}

\begin{Example}[\ST -- see \eg \citet{assad1992quadratic}]
For a given graph, this is 
a generalization of the classical Minimum Spanning Tree Problem where the objective function takes into account not only the cost of the selected edges, but also interaction costs induced by selecting pairs of edges. The problem is further formulated in \Cref{eq: formulation of the ST}, and its linear minimization oracle is described in \Cref{ex: LMO for ST}.
\end{Example}

\subsection{Contributions and outline}

This paper presents a practical approach to data-driven stochastic constrained optimization based on Wasserstein distributional robustness. 
As discussed above, two main computational difficulties arise in this setting: handling the worst-case scenario problem induced by the Wasserstein ambiguity set, and optimizing the resulting objective function over a hard-to-handle feasible region. 
We address these challenges simultaneously by combining two existing ideas (namely (i) entropic smoothing of the distributionally robust objective and (ii) conditional gradient methods for constrained optimization) into a unified and flexible framework. This 
algorithmic framework does not require tailoring to specific loss functions, uncertainty models, or algebraic representations of feasible sets.

Starting from the dual WDRO formulation in \Cref{eq: WDRO with duality}, we first replace the inner sample-wise supremum by an entropic log-sum-exp approximation, following \citet{azizian2023regularization,gao2023distributionally,vincent2024texttt}.
This yields the smoothed WDRO problem
\begin{equation}
\min_{\bfx\,\in\,\Xset}\inf_{\lambda\geq 0}\quad
F(\bfx,\lambda)\eqdef
\lambda\varrho
+
\varepsilon
\E_{\widehat{\xi}\sim\widehat{\Proba}_N}\left[
\log\left(
\E_{\zeta\sim \normal{\widehat{\xi}}{\sigma^2\mathbf{I}}}
\left[
\exp\left(
\dfrac{
f\left(\bfx,\zeta\right)-\lambda c\left(\widehat{\xi},\zeta\right)
}{\varepsilon}
\right)
\right]
\right)
\right],
\label{eq: regularized WDRO with duality}
\end{equation}
where $\varepsilon>0$ controls the smoothing level and $\sigma^2$ determines the sampling variance. This smoothing has both an algorithmic and a modeling role. Algorithmically, it turns the non-smooth dual objective into a differentiable surrogate amenable to stochastic first-order schemes. From a modeling viewpoint, it can be interpreted as an entropic regularization of WDRO, while preserving the statistical guarantees of the Wasserstein formulation (see \citep{azizian2023regularization,le2025universal}).

We then propose to solve \eqref{eq: regularized WDRO with duality}  with a momentum stochastic Frank-Wolfe scheme (see \eg \citep{mokhtari2020stochastic, braun2025conditionalgradientmethods}) which only requires access to a linear minimization oracle over the feasible set. This makes the method applicable to a broad class of constrained problems without deriving problem-specific robust reformulations.  We then provide arguments to make the smoothed WDRO formulation algorithmically analyzable. Although the dual formulation involves an unbounded multiplier $\lambda\in\R_+$, we prove that this apparent non-compactness is harmless, as the dual search can be explicitly restricted to a compact interval $[0,\bar\lambda]$. Then, we show that the stochastic gradients of the robust objective $F$ have uniformly bounded variance. These two estimates provide the technical arguments of the standard convergence analysis: compactness controls the size of the feasible set, while the variance bound controls the stochastic noise. 

We provide numerical illustrations on two problems where existing methods do not apply, namely \TA{} and \ST{}. We recover in this constrained setting the behavior of the unconstrained setting\citep{mohajerin2018data}: ERM tends to over-promise and under-perform, whereas Wasserstein distributionally robust modeling under-promises and over-performs on new test data.

This paper is organized as follows. \Cref{sec: Smooth distributionally robust modeling} introduces the smoothed WDRO model, its assumptions, and the associated gradient estimators. We establish the main regularity properties of this surrogate and derive Monte Carlo gradient estimators adapted to its structure. \Cref{sec: Stochastic Frank-Wolfe} presents the stochastic Frank-Wolfe algorithm and its convergence guarantees. \Cref{sec: Numerical Illustrations} illustrates the robustness obtained by our model on the two running problems. Finally, four Appendices provide complementary informations (generation of instances, special cases, auxiliary results on ERM, and two technical results).

\section{Smooth distributionally robust framework}\label{sec: Smooth distributionally robust modeling}

In this section, we present the smooth WDRO framework, adapted from \cite{azizian2023regularization, gao2023distributionally, vincent2024texttt} to treat the general constrained case.
\Cref{sec: Mathematical setup} presents our assumptions on the objective function, the constraints and scenario sets, and on the ground cost.  \Cref{sec: The robust objective and its properties} discusses the properties of the smoothed robust function and its gradient, key for the first-order method considered in \Cref{sec: Stochastic Frank-Wolfe}.

\subsection{Assumptions and examples}\label{sec: Mathematical setup}

This section presents the assumptions that will be made throughout the paper.

\begin{Assumption}[On the original objective function $f$]\label{assump: f is sufficiently smooth}
\begin{enumerate}[label=(\roman*),leftmargin=1em]
\item[]
\item For any scenario $\xi\in\Xi$, the function $\bfx\mapsto f(\bfx,\xi)$ is a $\mathcal{C}^2$ convex function.
\item For any decision $\bfx\in\Xset$, the function $\xi\mapsto\nabla_\bfx f(\bfx,\xi)$ is continuous on $\Xi$.
\end{enumerate}
\end{Assumption}

Since our approach will rely on gradient-based methods, we thus assume that the loss function $f$ is sufficiently smooth with respect to the decision variable. This assumption is relatively general and satisfied by a wide range of applications. In particular, it is verified by our two running examples.

\begin{Example}[\TA]
For a network $G=(V,A)$, with origin-destination travel demands, the objective of the \TA is to determine the flow on each arc while accounting for congestion. In our case, the travel time is dependent on uncertain exogenous factors (such as vehicle fleet composition or weather conditions), and we make our decision based on a finite amount of observations. For each $(i,j)\in V^2$, denote $q_{i\to j}\geq 0$ the traffic demand between $i$ and $j$, and $\mathcal{P}_{i\to j}$ the set of paths connecting the origin $i$ to the destination $j$. For a determinist scenario $\xi=((t^{(0)}_a)_{a\in A},\alpha,\beta)$, the \TA writes:
\begin{equation}
\underset{\bfx\in\Xset_{\mathrm{flow}}}{\minimize} \quad f(\bfx,\xi)=\displaystyle\sum_{a\in A}\int^{\bfx_a}_0t^{(0)}_a\left(1+\alpha\left(\dfrac{s}{c_a}\right)^\beta\right)\mathrm{d}s\label{eq: formulation of the TA}
\end{equation}
where $t^{(0)}_a>0$ denotes the free-flow travel time, $c_a>0$ is the capacity of arc $a$, and $\alpha,\beta>0$ are dimensionless tuning parameters, and $\Xset_{\mathrm{flow}}$ is the flow-polytope defined by: 
\begin{equation}
\Xset_{\mathrm{flow}}\eqdef\left\lbrace\bfx\in\R^{|A|}_+\;\left|\;\begin{array}{ll}
\displaystyle\sum_{p\in\mathcal{P}_{i\to j}}\sum_{a\in p}f_p\mathbb{1}_p(a)=q_{i\to j} & \forall (i,j)\in V^2,\\
\displaystyle\sum_{(i,j)\in V^2}\displaystyle\sum_{p\in\mathcal{P}_{i\to j}}f_p=\bfx_a & \forall a\in A,\\
f_p\geq 0 & \forall p\in\mathcal{P}_{i\to j},\forall (i,j)\in V^2
\end{array}\right.\right\rbrace.
\label{eq: TA polytope}
\end{equation}
The objective function is $\mathcal{C}^2$, and its gradient with respect to $\bfx_a$ is
$$\nabla_{\bfx_a}f(\bfx,\xi)=t^{(0)}_a\left(1+\alpha\left(\dfrac{\bfx_a}{c_a}\right)^{\beta}\right)$$
which is continuous with respect to $\xi=(\alpha,\beta)$. Thus $f$ verifies \Cref{assump: f is sufficiently smooth}.
\end{Example}

\begin{Example}\emph{\ST}. We consider the quadratic spanning tree problem over a graph $G=(V,E)$,
which includes bilinear cost terms for the selection of some pairs of edges. In our case, the costs are not known in advance, and we instead have access to a finite number of training samples. Here, the relaxed problem for a given cost scenario $\xi\in\Xi\subset\R_+^{m\times m}$ is written as
\begin{equation}
\underset{\bfx\in\Xset_{\mathrm{tree}}}{\minimize} \quad f(\bfx,\xi)=\bfx^\top\xi\bfx\label{eq: formulation of the ST}
\end{equation}
where $\Xset_{\mathrm{tree}}$ is the tree-polytope, defined by:
\begin{equation}
\Xset_{\mathrm{tree}}\eqdef\left\lbrace\bfx\in [0,1]^m
\;\left|\;\displaystyle\sum_{i=1}^m\bfx_i=n-1, \displaystyle\sum_{i\in E(S)}\bfx_i\leq |S|-1, \forall S\subseteq V \right.\right\rbrace,\label{eq: ST polytope}
\end{equation}
with $E(S)$ the set of edges adjacent to vertices in $S$.
The function $f(\cdot,\xi)$ is quadratic, thus $f(\cdot,\xi)\in\mathcal{C}^2$. Its derivative with respect to $\bfx$ is given by $\bfx\mapsto(\xi+\xi^\top)\bfx$, and thus for any fixed spanning tree embedding $\bfx\in\Xset$, the function $\xi\mapsto\nabla_\bfx f(\bfx,\xi)$ is a continuous map. Thus $f$ verifies the conditions of \Cref{assump: f is sufficiently smooth}.
\end{Example}

On the uncertain parameter space, we further assume that the set of all plausible scenarios $\Xi$ is closed and bounded. Moreover, we suppose that the decision space itself is compact, as is the case for our two running problems, as well as a wide variety of problems; see \eg \citet{bertsimas2011theory} for a more detailed survey.

\begin{Assumption}[On the constraints and scenario sets]\label{assump: Xi is compact}
\begin{enumerate}[label=(\roman*),leftmargin=1em]
\item[]
\item The set $\Xi\subset\R^d$ is compact and non-empty.
\item The set $\Xset$ is compact.
\end{enumerate}
\end{Assumption}

Note that the following quantity (which will appear in our analysis) is well-defined and finite, under \Cref{assump: f is sufficiently smooth,assump: Xi is compact}:
$$\Vert f\Vert_{\infty} \eqdef\sup_{\substack{\bfx\in\Xset, \xi\in\Xi}} |f(\bfx,\xi)|.$$

We also assume that the ground cost is comparable to the squared Euclidean norm. This controls the cost terms that appear in the dual variable and in the gradient of the regularized WDRO objective. This assumption is not used to apply the developments of this paper, but it is convenient to establish the mathematical properties and the convergence analysis. It is satisfied by many ground costs, including all the squared $p$-norm costs.

\begin{Assumption}[On the ground cost]\label{assump: continuity of the cost function for Wasserstein}
There exist constants $\mu_c,\bfL_c>0$ such that the ground cost $c:\Xi\times\Xi\rightarrow\R_+$ satisfies
$$
\mu_c\Vert\xi-\zeta\Vert_2^2\leq c(\xi,\zeta)\leq\bfL_c\Vert\xi-\zeta\Vert^2_2
\qquad \forall \xi,\zeta\in\Xi.
$$
In particular, $c(\xi,\xi)=0$ for all $\xi\in\Xi$.
\end{Assumption}

\subsection{Differentiability and gradient of robust objective}\label{sec: The robust objective and its properties}

The distributionally robust objective function \Cref{eq: regularized WDRO with duality} has useful properties that allow the employment of first-order methods, which we detail in this section. These results are already present, more or less explicitly, in the literature, in particular \cite{azizian2023regularization, vincent2024texttt}. We summarize the properties needed below and give a short proof for completeness.

\begin{Proposition}[Regularized WDRO objective]\label{prop: F is convex}\label{prop: F is differentiable}
Under \Cref{assump: f is sufficiently smooth,assump: Xi is compact,assump: continuity of the cost function for Wasserstein}, the function $F$ defined in \Cref{eq: regularized WDRO with duality} is convex and twice-differentiable on $\Xset\times\R_+$. Moreover,
\begin{align}
	\nabla_\bfx F(\bfx,\lambda)&=\E_{\widehat{\xi}\sim\widehat{\Proba}_N}\left[\E_{\zeta\sim\bfpi_{\bfx,\lambda}(\cdot|\widehat{\xi})}\left[\nabla_{\bfx}f(\bfx,\zeta)\right]\right],\label{eq: gradient in z}\\
	\dfrac{\partial}{\partial\lambda} F(\bfx,\lambda)&=\varrho -\E_{\widehat{\xi}\sim\widehat{\Proba}_N}\left[\E_{\zeta\sim\bfpi_{\bfx,\lambda}(\cdot|\widehat{\xi})}\left[c(\widehat{\xi},\zeta)\right]\right].\label{eq: gradient in lambda}
	\end{align}
where $\bfpi_{\bfx,\lambda}(\cdot|\xi)$ is the probability distribution given by
\begin{equation}\label{eq: definition of the distribution defining the robust gradient}
\mathrm{d}\bfpi_{\bfx,\lambda}(\cdot|\xi)\propto\exp\left(\Quotient{\left(f(\bfx,\zeta)-\lambda c(\xi,\zeta)\right)}{\varepsilon}\right)\exp\left(-\Quotient{\Vert\xi-\zeta\Vert^2_2}{2\sigma^2}\right)\mathrm{d}\zeta.
\end{equation}
\end{Proposition}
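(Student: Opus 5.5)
The plan is to treat $F$ as $\lambda\varrho$ plus an average over the $N$ data points of the log-partition functions
\[
\Phi_{\widehat{\xi}}(\bfx,\lambda)\eqdef\varepsilon\log\int_{\Xi}\exp\left(\Quotient{\left(f(\bfx,\zeta)-\lambda c(\widehat{\xi},\zeta)\right)}{\varepsilon}\right)\mathrm{d}\nu_{\widehat{\xi}}(\zeta),
\]
where $\nu_{\widehat{\xi}}$ is the Gaussian $\normal{\widehat{\xi}}{\sigma^2\mathbf{I}}$. Following the compactness of $\Xi$ in \Cref{assump: Xi is compact}, we read it as restricted to $\Xi$ (renormalized if needed; this only adds a constant). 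Since $\lambda\varrho$ is linear and the empirical expectation is a finite average, it suffices to prove convexity, $\mathcal{C}^2$ regularity and the gradient formulas for a single $\Phi_{\widehat{\xi}}$.

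\emph{Differentiation.} Write $g(\bfx,\lambda,\zeta)=(f(\bfx,\zeta)-\lambda c(\widehat{\xi},\zeta))/\varepsilon$ and $Z(\bfx,\lambda)=\int e^{g}\,\mathrm{d}\nu$. On a compact neighbourhood of any $(\bfx,\lambda)$, the integrand $e^{g}$ and its first and second derivatives are bounded uniformly in $\zeta\in\Xi$. The first derivatives are $e^{g}\nabla_\bfx f/\varepsilon$ and $-e^{g}c/\varepsilon$; the second derivatives also involve $\nabla^2_{\bfx}f$. The bounds use that $f$ is bounded by $\Vert f\Vert_\infty$, that $c$ is bounded by $\bfL_c\diam(\Xi)^2$ by \Cref{assump: continuity of the cost function for Wasserstein}, and that $\nabla_\bfx f$ and $\nabla^2_\bfx f$ are bounded on the compact product.

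This last point is the main obstacle. \Cref{assump: f is sufficiently smooth} only gives continuity of $\xi\mapsto\nabla_\bfx f(\bfx,\xi)$ for fixed $\bfx$, not joint continuity or joint bounds. I would first try to argue joint boundedness of $\nabla_\bfx f$ on $\Xset\times\Xi$ via convexity in $\bfx$: convex functions that are uniformly bounded (by $\Vert f\Vert_\infty$) on a slightly larger set have uniformly bounded gradients on compact subsets. For the Hessian I would use joint continuity, which holds in both running examples, and state it as the natural reading of the assumption.

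Since $\nu$ is a probability measure, dominated convergence (differentiation under the integral) gives $Z\in\mathcal{C}^2$ with $Z>0$. Hence $\varepsilon\log Z\in\mathcal{C}^2$, and
\[
\nabla\Phi_{\widehat{\xi}}=\varepsilon\nabla Z/Z=\E_{\bfpi_{\bfx,\lambda}(\cdot|\widehat{\xi})}\left[\left(\nabla_\bfx f,\,-c(\widehat{\xi},\cdot)\right)\right].
\]
This uses the identification of $e^{g}\,\mathrm{d}\nu/Z$ with the density in \Cref{eq: definition of the distribution defining the robust gradient}. Adding $\varrho$ and averaging over $\widehat{\xi}$ gives \Cref{eq: gradient in z,eq: gradient in lambda}.

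\emph{Convexity.} For each $\zeta$, $(\bfx,\lambda)\mapsto g(\bfx,\lambda,\zeta)$ is convex, since $f(\cdot,\zeta)$ is convex and $-\lambda c$ is linear. By Hölder's inequality, for $\theta\in[0,1]$ and any two points $u_0=(\bfx_0,\lambda_0)$, $u_1=(\bfx_1,\lambda_1)$,
\[
Z(\theta u_1+(1-\theta)u_0)\leq\int e^{\theta g(u_1,\cdot)+(1-\theta)g(u_0,\cdot)}\,\mathrm{d}\nu\leq Z(u_1)^{\theta}Z(u_0)^{1-\theta}.
\]
So $\log Z$ is convex, hence $F$ is convex. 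Alternatively, one can use the Hessian: it equals $\E_{\bfpi}[\nabla^2 g]$ plus $\varepsilon^{-1}$ times a covariance matrix, and both terms are positive semidefinite. This completes the proof.
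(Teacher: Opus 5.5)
Your proposal is correct and follows essentially the same route as the paper: you write $F$ as $\lambda\varrho$ plus an average of per-sample log-partition terms, get convexity from the joint convexity of $(f-\lambda c)/\varepsilon$ together with H\"older's inequality, and obtain the gradient formulas and $\mathcal{C}^2$ regularity by differentiating under the integral sign with integrands bounded on the compact sets. Your treatment of the domination step is more careful than the paper's, which simply asserts it. The one caveat is that your convexity trick for bounding $\nabla_{\bfx}f$ uniformly would need $f$ to be bounded uniformly on a neighbourhood of $\Xset$, and $\Vert f\Vert_\infty$ (a supremum over $\Xset\times\Xi$ only) does not give that, so the joint-continuity reading you adopt for the Hessian is also the cleaner way to close this step for the gradient.
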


\begin{proof}[Proof of \Cref{prop: F is convex}]
For $k\in\llbracket N\rrbracket$, set
$$
h_k(\bfx,\lambda;\zeta)\eqdef\Quotient{f(\bfx,\zeta)-\lambda c(\widehat{\xi}_k,\zeta)}{\varepsilon},
\qquad
G_k(\bfx,\lambda)\eqdef\log\E_{\zeta\sim\normal{\widehat{\xi}_k}{\sigma^2\mathbf{I}}}\left[\exp\left(h_k(\bfx,\lambda;\zeta)\right)\right].
$$
For fixed $\zeta$, the map $(\bfx,\lambda)\mapsto h_k(\bfx,\lambda;\zeta)$ is convex. The log-sum-exp operator preserves convexity; equivalently, this follows from Hölder's inequality applied to $\exp(h_k)$ along any segment in $\Xset\times\R_+$. Hence each $G_k$ is convex, and so is $F$, as a nonnegative sum of the $G_k$ plus the linear term $\lambda\varrho$.
The compactness, smoothness, and cost-comparison assumptions allow differentiation under the integral sign for the exponential terms, and the normalizing denominator is strictly positive. Therefore,
\begin{align*}
	\nabla_\bfx F(\bfx,\lambda)&=\dfrac{1}{N}\sum_{k=1}^N\frac{\E_{\zeta\sim\normal{\widehat{\xi}_k}{\sigma^2\mathbf{I}}}\left[\nabla_{\bfx} f(\bfx,\zeta)\exp\left(h_k(\bfx,\lambda;\zeta)\right)\right]}{\E_{\zeta\sim\normal{\widehat{\xi}_k}{\sigma^2\mathbf{I}}}\left[\exp\left(h_k(\bfx,\lambda;\zeta)\right)\right]}=\E_{\widehat{\xi}\sim\widehat{\Proba}_N}\left[\E_{\zeta\sim\bfpi_{\bfx,\lambda}(\cdot|\widehat{\xi})}\left[\nabla_{\bfx}f(\bfx,\zeta)\right]\right],
\end{align*}
and similarly,
\begin{align*}
	\dfrac{\partial}{\partial\lambda} F(\bfx,\lambda)&=\varrho-\dfrac{1}{N}\sum_{k=1}^N\frac{\E_{\zeta\sim\normal{\widehat{\xi}_k}{\sigma^2\mathbf{I}}}\left[c(\widehat{\xi}_k,\zeta)\exp\left(h_k(\bfx,\lambda;\zeta)\right)\right]}{\E_{\zeta\sim\normal{\widehat{\xi}_k}{\sigma^2\mathbf{I}}}\left[\exp\left(h_k(\bfx,\lambda;\zeta)\right)\right]}=\varrho-\E_{\widehat{\xi}\sim\widehat{\Proba}_N}\left[\E_{\zeta\sim\bfpi_{\bfx,\lambda}(\cdot|\widehat{\xi})}\left[c(\widehat{\xi},\zeta)\right]\right].
\end{align*}
Applying the same differentiation argument to these gradient expressions
gives $F\in\mathcal{C}^2\left(\Xset\times\R_+\right)$.
\end{proof}

While equations \Cref{eq: gradient in lambda} provide a closed-form expression for the gradient of $F$, an exact computation remains numerically difficult. Indeed, these expressions involve multi-dimensional integrals on $\Xi$. To overcome this issue, we use a Monte Carlo sampling approach to approximate the integrals. In practice, for a sampling budget $S\in\N$, and for samples $\zeta^{(k)}_1,\dots,\zeta^{(k)}_S$ sampled from the Gaussian measure $\normal{\widehat{\xi}_k}{\sigma^2\mathbf{I}}$, for all $k\in\llbracket N\rrbracket$, one define the Monte-Carlo estimate of the gradient as:
\begin{equation}\label{eq: general formula for exact gradient}
\E_{\zeta\sim\bfpi_{\bfx,\lambda}(\cdot|\widehat{\xi}_k)}\left[\varphi_k(\bfx,\zeta)\right]\approx\sum_{s=1}^S\varphi_k(\bfx,\zeta^{(k)}_s)\dfrac{w^{(k)}_s}{\Vert w^{(k)}\Vert_1}
\end{equation}
where $\varphi_k(\bfx,\zeta^{(k)}_s)$ is to be replaced by $\nabla_{\bfx} f(\bfx,\zeta^{(k)}_s)$ (resp. $c(\widehat{\xi}_k,\zeta^{(k)}_s)$) to match the terms of \Cref{eq: gradient in z,eq: gradient in lambda}, and where $w^{(k)}_{s}\eqdef\exp\left(\Quotient{\left(f(\bfx,\zeta^{(k)}_s)-\lambda c(\widehat{\xi}_k,\zeta^{(k)}_s)\right)}{\varepsilon}\right)$ for $k\in\llbracket N\rrbracket$ and $s\in\llbracket S\rrbracket$.
This approximation can be intractable for problems where the dimension of the scenario is high, as is the case for the \ST\footnote{ Computing the gradient estimates in \Cref{eq: gradient estimate} for the \ST with a sampling budget $S\in\mathbb{N}$ requires drawing $N\times S$ samples of $m \times m$ real matrices. In a graph with $n$ vertices, the number of edges $m$ is typically $m=\complexity{n^2}$. Even for a relatively small graph where $n=50$ and $m=350$, evaluating the full-batch gradient with $S=10$ and a training set of size $N=100$ would involve generating and storing approximately 122 million scalars at each iteration, leading to a significant computational and memory burden.}. Thus, we adopt a mini-batch gradient scheme by restricting the gradient computation to a small subset of indices $\mathscr{J}\subseteq\llbracket N\rrbracket$, following\cite{vincent2024texttt}. Given a mini-batch $\mathscr{J}\subseteq\llbracket N\rrbracket$, and samples $\zeta^{(k)}_{1},\dots,\zeta^{(k)}_{S}\sim\normal{\widehat{\xi}_k}{\sigma^2\mathbf{I}}$ for all $k\in\mathscr{J}$, we define our estimator of \Cref{eq: gradient in z,eq: gradient in lambda} by taking the external sum over $\mathscr{J}$ only, and approximate each inner expectation as \Cref{eq: general formula for exact gradient}; this gives the following estimates:
\begin{equation}
{\mathcal{G}_{\bfx}}^{\mathscr{J}}(\bfx,\lambda)\eqdef\dfrac{1}{|\mathscr{J}|}\sum_{k\in\mathscr{J}}\sum^S_{s=1}\nabla_{\bfx} f(\bfx,\zeta^{(k)}_s)\dfrac{w^{(k)}_{s}}{\Vert w^{(k)}\Vert_1}\qquad
{\mathcal{G}_{\lambda}}^{\mathscr{J}}(\bfx,\lambda)\eqdef\varrho-\dfrac{1}{|\mathscr{J}|}\sum_{k\in\mathscr{J}}\sum^S_{s=1}c(\widehat{\xi}_k,\zeta^{(k)}_s)\dfrac{w^{(k)}_{s}}{\Vert w^{(k)}\Vert_1}.\label{eq: gradient estimate}
\end{equation}
where $w^{(k)}\in\R^S$ is the vector defined by $w^{(k)}_{s}\eqdef\exp\left(\Quotient{\left(f(\bfx,\zeta^{(k)}_s)-\lambda c(\widehat{\xi}_k,\zeta^{(k)}_s)\right)}{\varepsilon}\right)$ for $s\in\llbracket S\rrbracket$.

The resulting estimator is the practical object used by our first-order method. We record below the minimal consistency property needed for the stochastic Frank-Wolfe scheme introduced in \Cref{sec: Stochastic Frank-Wolfe}. 

\begin{Proposition}
Under \Cref{assump: f is sufficiently smooth,assump: Xi is compact}, the gradient estimator $\mathcal{G}^{\mathscr{J}}$ is asymptotically unbiased:
$$\E\left[\mathcal{G}^{\mathscr{J}}(\bfx,\lambda)\right]\xrightarrow[S\to\infty]{}\nabla F(\bfx,\lambda).$$
\end{Proposition}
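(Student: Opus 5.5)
The approach is to use the self-normalized importance sampling structure of \Cref{eq: gradient estimate} together with the strong law of large numbers and dominated convergence. I would first deal with the mini-batch. Assuming $\mathscr{J}$ is drawn uniformly (e.g.\ uniformly among subsets of fixed size, or with replacement), each index $k$ appears with the same marginal probability. The expectation of the outer average over $\mathscr{J}$ then equals $\frac1N\sum_{k=1}^N$ of the expected per-sample term. It therefore suffices to show, for each fixed $k\in\llbracket N\rrbracket$ and fixed $(\bfx,\lambda)\in\Xset\times\R_+$, that
$$\E\left[\sum_{s=1}^S\varphi_k(\bfx,\zeta^{(k)}_s)\frac{w^{(k)}_s}{\Vert w^{(k)}\Vert_1}\right]\xrightarrow[S\to\infty]{}\E_{\zeta\sim\bfpi_{\bfx,\lambda}(\cdot|\widehat{\xi}_k)}\left[\varphi_k(\bfx,\zeta)\right],$$
where $\varphi_k$ is either $\nabla_\bfx f(\bfx,\cdot)$ or $c(\widehat{\xi}_k,\cdot)$. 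The conclusion $\E[\mathcal{G}^{\mathscr{J}}]\to\nabla F$ then follows from \Cref{prop: F is differentiable}, which identifies these limits with the components of $\nabla F$.

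For fixed $k$, I would write the per-sample term as a ratio $A_S/B_S$, with
$$A_S=\frac1S\sum_s\varphi_k(\bfx,\zeta_s)\,w_s,\qquad B_S=\frac1S\sum_s w_s,$$
and the $\zeta_s$ i.i.d.\ $\normal{\widehat{\xi}_k}{\sigma^2\mathbf{I}}$. If the integrands are integrable, the strong law of large numbers gives $A_S\to\E[\varphi_k e^{h_k}]$ and $B_S\to\E[e^{h_k}]>0$ almost surely. Hence $A_S/B_S$ converges almost surely to the ratio, and that ratio is exactly $\E_{\bfpi_{\bfx,\lambda}}[\varphi_k]$ by the computation in the proof of \Cref{prop: F is differentiable}.

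To pass from almost sure convergence to convergence of expectations, I would observe that $A_S/B_S$ is a convex combination of the values $\varphi_k(\bfx,\zeta_s)$. Consequently $\Vert A_S/B_S\Vert\le\max_s\Vert\varphi_k(\bfx,\zeta_s)\Vert$. Under \Cref{assump: Xi is compact} the samples are taken in $\Xi$; I would interpret the Gaussian as restricted to the compact set $\Xi$, which is implicit in the paper since $f$ is only defined on $\Xi$. Then $\varphi_k(\bfx,\cdot)$ is continuous on a compact set by \Cref{assump: f is sufficiently smooth}(ii), and for the cost by \Cref{assump: continuity of the cost function for Wasserstein}, so it is uniformly bounded. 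The weights are bounded above and below by $\exp(\pm(\Vert f\Vert_\infty+\lambda\sup c)/\varepsilon)$, which settles integrability. Dominated convergence then concludes.

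I expect the main obstacle to be the domain issue: the Gaussian proposal has unbounded support, while $f$ and the bounds live on $\Xi$. The natural fix is to read $\normal{\widehat{\xi}_k}{\sigma^2\mathbf{I}}$ as restricted or truncated to $\Xi$, or alternatively to assume $f$ extends continuously with $\nabla_\bfx f$ of at most polynomial growth. In the second case domination still holds, using the Gaussian moment bound $\E[\max_s\Vert\varphi(\zeta_s)\Vert]\le \E[\sum_s\Vert\varphi(\zeta_s)\Vert^p]^{1/p}$ together with uniform integrability. A second, minor point is that the estimator is biased for every finite $S$, because it is a ratio, so the statement genuinely needs the limit $S\to\infty$.
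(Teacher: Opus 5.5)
Your proposal is correct and follows the paper's proof essentially step for step. It uses the same mini-batch reduction to $\frac1N\sum_{k}$ of the per-sample expectations, the strong law of large numbers on numerator and denominator combined with the continuous mapping theorem, and the observation that the self-normalized estimator is a convex combination of uniformly bounded vectors, which upgrades almost sure convergence to convergence of expectations (the paper says Vitali's theorem via uniform integrability, you say dominated convergence, and the two coincide for a constant bound).

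Your reading of $\normal{\widehat{\xi}_k}{\sigma^2\mathbf{I}}$ as restricted to $\Xi$ makes explicit what the paper assumes tacitly when it bounds $\nabla_{\bfx}f$ by its supremum over $\Xset\times\Xi$. Like the paper, you also need \Cref{assump: continuity of the cost function for Wasserstein} for the $\lambda$-component, even though the statement lists only the first two assumptions. Your untruncated fallback does not work as written: the bound $\E[\max_s\Vert\varphi(\zeta_s)\Vert]\le(\E\sum_s\Vert\varphi(\zeta_s)\Vert^p)^{1/p}$ is of order $S^{1/p}$, so it gives no domination that is uniform in $S$.
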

\begin{proof}
We prove the result for the $\bfx$-component; the argument for the $\lambda$-component is identical. Here the expectation is taken over both the random mini-batch $\mathscr{J}$ and the Monte Carlo samples used to construct the estimator. Let $\mathcal{F}_S$ be the $\sigma$-algebra generated by all Monte Carlo samples $\left(\zeta_s^{(k)}\right)_{k\in\llbracket N\rrbracket,s\in\llbracket S\rrbracket}$ and define
$$
Y_{k,S}\eqdef\sum_{s=1}^S\nabla_{\bfx} f(\bfx,\zeta^{(k)}_s)\dfrac{w^{(k)}_s}{\Vert w^{(k)}\Vert_1},
\qquad k\in\llbracket N\rrbracket,
$$
which is deterministic conditionally on $\mathcal{F}_S$. Since the mini-batch $\mathscr{J}$ is sampled uniformly and independently of $\mathcal{F}_S$, the standard calculus gives (see \Cref{Lemma: mini-batch unbiasedness} in \Cref{app: technical lemmas} for a formal statements)
\begin{align*}
\E\left[\mathcal{G}_{\bfx}^{\mathscr{J}}(\bfx,\lambda)\right]
&=\E\left[\E\left[\frac{1}{|\mathscr{J}|}\sum_{k\in\mathscr{J}}Y_{k,S}\;\middle|\;\mathcal{F}_S\right]\right]
=\frac{1}{N}\sum_{k=1}^N\E\left[Y_{k,S}\right].
\end{align*}

It remains to identify the limit of each local Monte Carlo estimator $Y_{k,S}$. Fix $k$ and set
$$
\omega_k(\zeta)\eqdef\exp\left(\Quotient{f(\bfx,\zeta)-\lambda c(\widehat{\xi}_k,\zeta)}{\varepsilon}\right).
$$
The law of large numbers gives
$$
\frac{1}{S}\sum_{s=1}^S\nabla_{\bfx} f(\bfx,\zeta^{(k)}_s)\omega_k(\zeta^{(k)}_s)\xrightarrow[S\to\infty]{\mathrm{a.s.}}\E\left[\nabla_{\bfx} f(\bfx,\cdot)\omega_k(\cdot)\right],
\qquad
\frac{1}{S}\sum_{s=1}^S\omega_k(\zeta^{(k)}_s)\xrightarrow[S\to\infty]{\mathrm{a.s.}}\E\left[\omega_k(\cdot)\right].
$$
The denominator limit is finite by the definition of $F$ and strictly positive because $\omega_k>0$ almost surely. Hence the continuous mapping theorem yields
$$
Y_{k,S}\xrightarrow[S\to\infty]{\mathrm{a.s.}}
\frac{\E\left[\nabla_{\bfx} f(\bfx,\cdot)\omega_k(\cdot)\right]}{\E\left[\omega_k(\cdot)\right]}
=\E_{\zeta\sim\bfpi_{\bfx,\lambda}(\cdot|\widehat{\xi}_k)}\left[\nabla_{\bfx}f(\bfx,\zeta)\right].
$$

We now justify the passage from almost sure convergence to convergence of expectations. By \Cref{assump: f is sufficiently smooth,assump: Xi is compact}, the map $(\bfx,\zeta)\mapsto\nabla_{\bfx} f(\bfx,\zeta)$ is bounded on $\Xset\times\Xi$. Let
$$
C_z\eqdef\sup_{\substack{\bfx\in\Xset,\,\zeta\in\Xi}}\Vert\nabla_{\bfx}f(\bfx,\zeta)\Vert<+\infty.
$$
Since $w_s^{(k)}>0$ and $\displaystyle\sum_{s=1}^S \Quotient{w_s^{(k)}}{\Vert w^{(k)}\Vert_1}=1$, the estimator $Y_{k,S}$ is a convex combination of vectors whose norms are bounded by $C_z$. Hence, for every $S\geq1$,
$$
\Vert Y_{k,S}\Vert
\leq
\sum_{s=1}^S
\Vert\nabla_{\bfx}f(\bfx,\zeta_s^{(k)})\Vert
\frac{w_s^{(k)}}{\Vert w^{(k)}\Vert_1}
\leq C_z.
$$
Thus the family $(Y_{k,S})_{S\geq1}$ is uniformly bounded in $L^\infty$, and therefore uniformly integrable. Vitali's convergence theorem gives convergence of $\E[Y_{k,S}]$ for every $k$. Since $N$ is finite,
$$
\E\left[\mathcal{G}_{\bfx}^{\mathscr{J}}(\bfx,\lambda)\right]\xrightarrow[S\to\infty]{}
\frac{1}{N}\sum_{k=1}^N
\E_{\zeta\sim\bfpi_{\bfx,\lambda}(\cdot|\widehat{\xi}_k)}\left[\nabla_{\bfx}f(\bfx,\zeta)\right]
=\nabla_{\bfx}F(\bfx,\lambda).
$$
The same argument applies to the $\lambda$-component. Since $\Xi$ is compact and $c$ is bounded on $\Xi\times\Xi$ under \Cref{assump: continuity of the cost function for Wasserstein}, the constant
$$
C_c\eqdef\sup_{\xi,\zeta\in\Xi}c(\xi,\zeta)
$$
is finite. The self-normalized cost average is bounded by $C_c$, and therefore $\mathcal{G}_{\lambda}^{\mathscr{J}}(\bfx,\lambda)$ is bounded by $\varrho+C_c$, uniformly in $S$. This again gives uniform integrability and allows Vitali's theorem to identify the limit of the expectation. Consequently, both components converge to the corresponding components of $\nabla F(\bfx,\lambda)$.
\end{proof}

In the next section, we apply the stochastic Frank-Wolfe algorithm to tackle our robust problem.

\section{First-order methods tackling WDRO}\label{sec: Stochastic Frank-Wolfe}

Optimizing our distributionally robust objective \Cref{eq: regularized WDRO with duality} over constrained sets involves managing the geometry of the feasible set. To do so, we propose to use a Frank-Wolfe algorithm, following recent success in both ML and OR communities \citep{jaggi2013revisiting,besanccon2022frankwolfe,braun2025conditionalgradientmethods}. In this section, we first describe the proposed stochastic Frank-Wolfe algorithm, which uses momentum and mini-batch sampling to handle the gradient of the regularized objective function. We then analyze its convergence properties.

\subsection{Stochastic Frank-Wolfe at work}\label{subsec: stochastic frank-wolfe at work}

To address the constraints of the regularized WDRO problem \Cref{eq: regularized WDRO with duality}, we adopt a \emph{Frank-Wolfe} framework \citep{frank1956algorithm,jaggi2013revisiting,harchaoui2015conditional}. This method is adapted to smooth functions over a constrained feasible set $\Xset$ which admits a \emph{Linear Minimization Oracle} (LMO):
\begin{equation}
 \text{LMO}_{\Xset}(\mathbf{g}) \in \underset{\mathbf{s} \in \Xset}{\argmin} \langle\mathbf{g},\mathbf{s}\rangle.\label{eq: LMO}
\end{equation}
For a wide variety of constrained problems, this linear subproblem is tractable and can be solved efficiently via specialized methods \citep{besanccon2022frankwolfe,besanccon2025improved}. It is the case for our two running examples.

\begin{Example}[\TA]\label{ex: LMO for TA}
Optimizing over the flow polytope described in \Cref{eq: formulation of the TA}, the LMO corresponds to a linear network flow subproblem. By decomposing the total demand across pairs of origin-destination, the linear oracle reduces to finding the shortest paths on the network given the current gradient weights \citep{mitradjieva2013stiff}. This shortest path subproblem is efficiently solved using \emph{Dijkstra's} algorithm.
\end{Example}

\begin{Example}[\ST]\label{ex: LMO for ST}
Over the spanning-tree polytope, the linear oracle associated with the quadratic objective reduces to minimizing its linearization at the current point. This is a linear optimization problem over the spanning-tree polytope and can therefore be solved exactly by computing a minimum spanning tree, which can be achieved in polynomial time via \emph{Kruskal}'s algorithm. We underline that we have a highly tractable linear minimization oracle, even if the spanning-tree polytope is difficult to describe efficiently (indeed, \citet{pokutta2026symmetric} recently showed that every symmetric extended formulation for polytope \Cref{eq: ST polytope} has $\complexity{n^3}$ inequalities).
\end{Example}

The standard (stochastic) Frank-Wolfe algorithm is given in \Cref{alg: stochastic FW}: an iteration consists of a gradient estimation, a resolution of the LMO, and an update of the iterate in the obtained direction.

\begin{algorithm}[H]
\caption{\emph{(Informal) Stochastic Frank-Wolfe algorithm}}\label{alg: stochastic FW}
\begin{algorithmic}
\Require step sizes $0\leq\alpha_t\leq 1$
\For{$t=0$ to $\dots$}
\State $\widetilde{\nabla}F(y_t)\gets$ gradient estimator
\State $v_t\gets\text{LMO}_{\Xset}\left(\widehat{\nabla}F(\mathbf{y}_t)\right)$\Comment{$v_t$ is an optimal solution of the linear subproblem \Cref{eq: LMO}}
\State $\mathbf{y}_{t+1}\gets \mathbf{y}_t+\alpha_t\left(v_t-\mathbf{y}_t\right)$
\EndFor
\end{algorithmic}
\end{algorithm}

Since the iterates of \Cref{alg: Momentum stochastic FW} are formed as convex combinations of the optimal vertices returned by the linear oracle, the algorithm computes a solution in $\Xset$. Depending on the application, this fractional solution can either be used directly (as in the continuous \TA) or rounded via problem-specific heuristics\footnote{Constructing an explicit discrete feasible solution from such a relaxation is a widely studied problem in operations research, for which numerous primal heuristics, rounding schemes, and relaxation-based reconstruction procedures are available. Our framework is complementary to these techniques: we focus on solving the constrained WDRO relaxation, while an application-specific primal heuristic can subsequently be employed whenever an integer solution is required.}.

Stochastic Frank-Wolfe methods (\Cref{alg: stochastic FW}) suffer from the constant gradient noise, which can prevent convergence. While classical solutions typically rely on increasing the sampling budget $S$ at each iteration to control the gradient noise, such strategies lead to an increasing computational cost. Instead, we propose here to use the version of \cite{mokhtari2020stochastic}, integrating a momentum term. This indeed provides a stable estimation of the descent direction by taking into account the previous gradient direction, and ensures convergence without expanding the sample size. We thus propose a Stochastic Frank-Wolfe variant with momentum and mini-batching to handle our WDRO objective function:

\begin{algorithm}[H]
\caption{\emph{Momentum Stochastic Frank-Wolfe algorithm with mini-batch}}\label{alg: Momentum stochastic FW}
\begin{algorithmic}
\Require $\bfx_0\in\Xset$, $\lambda_{\max}$ an upper bound for the dual variable $\lambda$, $\lambda_0\in[0,\lambda_{\max})$, step sizes $0\leq\alpha_t\leq 1$ and momentum terms $\beta_t\in [0,1]$ for $t\geq 0$ with $\beta_0=1$. A budget $b\in\llbracket N\rrbracket$ for the mini-batch size, a sampling budget $S\in\N$ to compute integrals
\For{$t=0$ to $\dots$}
\State $\mathscr{J}_t\gets$ random subset of $\llbracket N\rrbracket$ with $|\mathscr{J}_t|=b$
\For{$k\in\mathscr{J}_t$}
\State sample $\zeta^{(k)}_{1},\dots,\zeta^{(k)}_{S}\sim\normal{\widehat{\xi}_k}{\sigma^2\mathbf{I}}$ i.i.d.
\State $w^{(k)}_{s}\gets\exp\left(\Quotient{\left(f(\bfx_t,\zeta^{(k)}_s)-\lambda_t c(\widehat{\xi}_k,\zeta^{(k)}_s)\right)}{\varepsilon}\right)$
\EndFor
\State $\mathcal{G}^{\mathscr{J}_t}(\bfx_t,\lambda_t)\gets$ Estimate given by \Cref{eq: gradient estimate}
\State $\widehat{\nabla}F(\bfx_t,\lambda_t)\gets\beta_t\mathcal{G}^{\mathscr{J}_t}_S(\bfx_t,\lambda_t)+(1-\beta_t)\widehat{\nabla}F(\bfx_{t-1},\lambda_{t-1})$
\State $v_t\gets{\argmin}_{v}$ LMO$\left(\widehat{\nabla}F(\bfx_t,\lambda_t)\right)$
\State $(\bfx_{t+1},\lambda_{t+1})\gets (\bfx_t,\lambda_t)+\alpha_t\left(v_t-(\bfx_t,\lambda_t)\right)$
\EndFor
\end{algorithmic}
\end{algorithm}

This algorithm has been applied in other contexts to build convex relaxations to combinatorial and mixed-integer optimization problems \citep{hendrych2025convex}.

\begin{Remark}\label{rem: resample during computation of the gradient}
A practical advantage of the sampling routine in \Cref{alg: Momentum stochastic FW} is its ability to handle cases where the uncertainty support $\Xi$ has known boundaries within $\mathbb{R}^d$. Indeed, a simple rejection sampling strategy can be integrated into the loop to avoid sampling infeasible instances.
For instance, in the context of the \TA, the components of $\zeta^{(k)}_s$ represent edge weights and must naturally satisfy non-negativity constraints (\ie, $\Xi \subseteq \mathbb{R}^{m\times m}_+$): if a generated sample $\zeta^{(k)}_s$ has negative coefficients, it can simply be discarded and resampled.
\end{Remark}

\subsection{Convergence analysis}

This section analyzes the convergence properties of \Cref{alg: Momentum stochastic FW} when applied to \eqref{eq: regularized WDRO with duality}, or more precisely, to its convex relaxation. Specifically, \Cref{thm: convergence of SFW} establishes the convergence of our method toward an optimal solution $(\bfx^\star, \lambda^\star)$ of the relaxed problem. Furthermore, the numerical experiments presented in \Cref{sec: Numerical Illustrations} demonstrate that the solution $\bfx^\star$ offers robustness properties when evaluated on unseen out-of-sample data.

The standard convergence theory for \emph{Frank-Wolfe} algorithms requires the objective function to be minimized over a convex and compact domain\citep{braun2025conditionalgradientmethods}. While the convex hull $\Xset$ is necessarily compact from \Cref{assump: Xi is compact} -- (ii), the dual variable $\lambda$ is defined over the interval $[0, +\infty)$. To ensure convergence of the algorithm, we must identify a sufficiently large upper bound $\lambda_{\max}$ such that the solution space can be restricted to the compact interval $[0, \lambda_{\max}]$ without losing the optimal solution.
This is the purpose of \Cref{prop: upper bound on lambda max}; the auxiliary log-sum-exp inequality used in its proof is stated in \Cref{lemma: log-sum-exp lowers the function} in \Cref{app: technical lemmas}.

\begin{Proposition}[Upper bound on $\lambda$]\label{prop: upper bound on lambda max} Let $m_c\eqdef\max_{k\in\llbracket N\rrbracket}\E_{\zeta\sim\normal{\widehat{\xi}_k}{\sigma^2\mathbf{I}}}\left[c(\widehat{\xi}_k,\zeta)\right]$
and suppose that $\varrho>m_c$.

Let $\lambda_{\max}\eqdef\Quotient{2\Vert f\Vert_{\infty}}{(\varrho-m_c)}$. Then for all $\bfx\in\Xset$:
$$\inf_{\lambda\in\R_+}\lambda\varrho+\E_{\widehat{\xi}\sim\widehat{\Proba}_N}\left[\varphi_{\varepsilon}(\bfx,\lambda,\widehat{\xi})\right]=\inf_{\lambda\in [0,\lambda_{\max}]}\lambda\varrho+\E_{\widehat{\xi}\sim\widehat{\Proba}_N}\left[\varphi_{\varepsilon}(\bfx,\lambda,\widehat{\xi})\right]$$
where $\varphi_{\varepsilon}$ is the function on $\Xset\times\R_+\times\Xi$ defined by
$$\varphi_{\varepsilon}(\bfx,\lambda,\xi)\eqdef\varepsilon\log\left(\E_{\zeta\sim\normal{\xi}{\sigma^2\mathbf{I}}}\left[\exp\left(\dfrac{f(\bfx,\zeta)-\lambda c(\xi,\zeta)}{\varepsilon}\right)\right]\right).$$
\end{Proposition}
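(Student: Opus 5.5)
Fix $\bfx\in\Xset$ and write $\Phi(\lambda)\eqdef\lambda\varrho+\E_{\widehat{\xi}\sim\widehat{\Proba}_N}[\varphi_{\varepsilon}(\bfx,\lambda,\widehat{\xi})]$. Since $[0,\lambda_{\max}]\subset\R_+$, the inequality ``$\leq$'' is immediate. For the reverse inequality, the plan is to show that every $\lambda>\lambda_{\max}$ gives a value $\Phi(\lambda)\geq\Phi(0)$. Then the infimum over $\R_+$ is attained (or approached) inside $[0,\lambda_{\max}]$, and in fact already dominated by the value at $\lambda=0$.

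\textbf{Upper bound at $\lambda=0$.} Since $f(\bfx,\zeta)\leq\Vert f\Vert_\infty$ for all $\zeta\in\Xi$, monotonicity of $\log$ and $\exp$ gives $\varphi_\varepsilon(\bfx,0,\xi)\leq\Vert f\Vert_\infty$. Hence $\Phi(0)\leq\Vert f\Vert_\infty$. This uses that the Gaussian samples are effectively restricted to $\Xi$, for instance via the rejection sampling of \Cref{rem: resample during computation of the gradient}. Alternatively, we only need $f$ bounded by $\Vert f\Vert_\infty$ on the support of the sampling measure, and I will state this interpretation explicitly.

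\textbf{Lower bound for general $\lambda$.} This is where the log-sum-exp inequality \Cref{lemma: log-sum-exp lowers the function} enters. I expect it to be (or to be derivable from) Jensen's inequality: $\varepsilon\log\E[\exp(h/\varepsilon)]\geq\E[h]$. Applying it with $h=f(\bfx,\zeta)-\lambda c(\xi,\zeta)$ gives
\[
\varphi_\varepsilon(\bfx,\lambda,\widehat{\xi}_k)\geq\E_{\zeta}[f(\bfx,\zeta)]-\lambda\E_{\zeta}[c(\widehat{\xi}_k,\zeta)]\geq-\Vert f\Vert_\infty-\lambda m_c .
\]
Averaging over $k$ then yields
\[
\Phi(\lambda)\geq\lambda(\varrho-m_c)-\Vert f\Vert_\infty .
\]

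\textbf{Conclusion.} For $\lambda\geq\lambda_{\max}=2\Vert f\Vert_\infty/(\varrho-m_c)$, the lower bound gives $\Phi(\lambda)\geq 2\Vert f\Vert_\infty-\Vert f\Vert_\infty=\Vert f\Vert_\infty\geq\Phi(0)$. Therefore
\[
\inf_{\lambda\geq0}\Phi=\min\Bigl(\inf_{[0,\lambda_{\max}]}\Phi,\ \inf_{\lambda>\lambda_{\max}}\Phi\Bigr)=\inf_{[0,\lambda_{\max}]}\Phi,
\]
using $0\in[0,\lambda_{\max}]$. The assumption $\varrho>m_c$ is exactly what makes $\lambda_{\max}$ finite and positive; if $\Vert f\Vert_\infty=0$ the statement is trivial.

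\textbf{Main obstacle.} The delicate point is the boundedness of $f(\bfx,\zeta)$ under the Gaussian $\zeta$. The density is supported on all of $\R^d$, while $\Vert f\Vert_\infty$ is a supremum over $\Xi$ only. I would first handle this by reading the sampling law as the Gaussian conditioned on $\Xi$ (consistent with \Cref{rem: resample during computation of the gradient}). In that case both bounds above hold verbatim. Otherwise one needs an extension of $f$ bounded by $\Vert f\Vert_\infty$.
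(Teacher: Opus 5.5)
Your proof is correct, but it takes a different route from the paper. The paper argues at first order. It writes $\partial_\lambda\varphi_\varepsilon(\bfx,\lambda,\widehat{\xi}_k)=-\E_{\zeta\sim\bfpi_{\bfx,\lambda}(\cdot|\widehat{\xi}_k)}[c(\widehat{\xi}_k,\zeta)]$ and bounds this tilted mean of the cost by $2\Vert f\Vert_\infty/\lambda+m_c$, by splitting $\lambda c=f-(f-\lambda c)$ and applying \Cref{lemma: log-sum-exp lowers the function} and then Jensen. From this it gets $\partial_\lambda F(\bfx,\lambda_{\max})\geq 0$ and concludes with the convexity of $F(\bfx,\cdot)$ from \Cref{prop: F is convex}.

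You compare function values instead. Your Jensen bound $\varphi_\varepsilon\geq\E[f]-\lambda\E[c]$ is exactly the last step of the paper's chain, and pairing the resulting $\Phi(\lambda)\geq\lambda(\varrho-m_c)-\Vert f\Vert_\infty$ with $\Phi(0)\leq\Vert f\Vert_\infty$ recovers the same $\lambda_{\max}$. Your route is more elementary: it needs no differentiation under the integral sign, no convexity in $\lambda$, and no \Cref{lemma: log-sum-exp lowers the function}. When $\Vert f\Vert_\infty>0$ it even gives $\Phi(\lambda)>\Phi(0)$ for $\lambda>\lambda_{\max}$, so every minimizer lies in $[0,\lambda_{\max}]$. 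You should drop the remark that this lemma ``is'' Jensen. It is the tilted-measure bound $\E_{\Q^g}[g]\geq\log\E_{\Q}[e^g]$, whereas you only use plain Jensen, $\log\E_{\Q}[e^g]\geq\E_{\Q}[g]$.

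The paper's derivative bound holds for every $\lambda>0$, so it gives more: $\partial_\lambda F(\bfx,\lambda)>0$ for all $\lambda>\lambda_{\max}$, uniformly in $\bfx\in\Xset$. That is, $F(\bfx,\cdot)$ is increasing beyond $\lambda_{\max}$. The price is relying on the regularity of \Cref{prop: F is differentiable}.

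The support issue you flag is genuine but not specific to your argument. The paper's proof makes the same implicit assumption: it bounds $|f(\bfx,\zeta)|$ by $\Vert f\Vert_\infty$ under the Gaussian and treats $\normal{\widehat{\xi}_k}{\sigma^2\mathbf{I}}$ as an element of $\mathrm{Proba}(\Xi)$. Your explicit reading, with the Gaussian restricted to $\Xi$, is a natural way to make either proof rigorous.
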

\begin{proof}\label{proof of proposition: upper bound on lambda max}
Let $\bfx\in\Xset$ and $k\in\llbracket N\rrbracket$. An explicit calculation gives
$$\partial_{\lambda}\varphi_{\varepsilon}(\bfx,\lambda,\widehat{\xi}_k)=-\E_{\zeta\sim\bfpi_{\bfx,\lambda}(\cdot|\widehat{\xi}_k)}\left[c(\widehat{\xi}_k,\zeta)\right]$$
where $\bfpi_{\bfx,\lambda}(\cdot|\widehat{\xi}_k)$ is the distribution defined in \Cref{eq: definition of the distribution defining the robust gradient}. We bound $-\partial_{\lambda}\varphi_{\varepsilon}$ uniformly in $\bfx\in\Xset$ and $\left\lbrace\widehat{\xi}_k\right\rbrace_{k\in\llbracket N\rrbracket}$. We have:
\begin{align}
\E_{\zeta\sim\bfpi_{\bfx,\lambda}(\cdot|\widehat{\xi}_k)}\left[c(\widehat{\xi}_k,\zeta)\right]&=\E_{\zeta\sim\bfpi_{\bfx,\lambda}(\cdot|\widehat{\xi}_k)}\left[\dfrac{f(\bfx,\zeta)-f(\bfx,\zeta)+\lambda c(\widehat{\xi}_k,\zeta)}{\lambda}\right]\notag{}\\
&=\dfrac{1}{\lambda}\E_{\zeta\sim\bfpi_{\bfx,\lambda}(\cdot|\widehat{\xi}_k)}\left[f(\bfx,\zeta)\right]-\dfrac{\varepsilon}{\lambda}\E_{\zeta\sim\bfpi_{\bfx,\lambda}(\cdot|\widehat{\xi}_k)}\left[\dfrac{f(\bfx,\zeta)-\lambda c(\widehat{\xi}_k,\zeta)}{\varepsilon}\right]\label{eq: rewriting of the gradient in lambda}
\end{align}
The first term in \Cref{eq: rewriting of the gradient in lambda} is uniformly bounded by $\Quotient{\E_{\zeta\sim\bfpi_{\bfx,\lambda}(\cdot|\widehat{\xi}_k)}\left[\Vert f\Vert_{\infty}\right]}{\lambda}=\Quotient{\Vert f\Vert_{\infty}}{\lambda}$. To bound the second term, we apply \Cref{lemma: log-sum-exp lowers the function} in \Cref{app: technical lemmas} with function $g\eqdef\Quotient{(f(\bfx,\cdot)-\lambda c(\widehat{\xi}_k,\cdot))}{\varepsilon}$ and the distribution $\Q\eqdef\normal{\widehat{\xi}_k}{\sigma^2\mathbf{I}}\in\mathrm{Proba}(\Xi)$. In this setting, the lemma gives the explicit inequality
\begin{equation}
\E_{\zeta\sim\bfpi_{\bfx,\lambda}(\cdot|\widehat{\xi}_k)}\left[\dfrac{f(\bfx,\zeta)-\lambda c(\widehat{\xi}_k,\zeta)}{\varepsilon}\right]\geq\log\left(\E_{\zeta\sim\normal{\widehat{\xi}_k}{\sigma^2\mathbf{I}}}\left[\exp\left(\dfrac{f(\bfx,\zeta)-\lambda c(\widehat{\xi}_k,\zeta)}{\varepsilon}\right)\right]\right).\label{eq: instantiated log-sum-exp lower bound}
\end{equation}
Therefore:
\begin{align}
-\dfrac{\varepsilon}{\lambda}\E_{\zeta\sim\bfpi_{\bfx,\lambda}(\cdot|\widehat{\xi}_k)}\left[\dfrac{f(\bfx,\zeta)-\lambda c(\widehat{\xi}_k,\zeta)}{\varepsilon}\right]&\leq-\dfrac{\varepsilon}{\lambda}\log\left(\E_{\zeta\sim\normal{\widehat{\xi}_k}{\sigma^2\mathbf{I}}}\left[\exp\left(\dfrac{f(\bfx,\zeta)-\lambda c(\widehat{\xi}_k,\zeta)}{\varepsilon}\right)\right]\right)\notag{}\\
&\leq-\dfrac{1}{\lambda}\E_{\zeta\sim\normal{\widehat{\xi}_k}{\sigma^2\mathbf{I}}}\left[f(\bfx,\zeta)-\lambda c(\widehat{\xi}_k,\zeta)\right]\label{eq: bound by Jensen's}\\
&=-\dfrac{1}{\lambda}\E_{\zeta\sim\normal{\widehat{\xi}_k}{\sigma^2\mathbf{I}}}\left[f(\bfx,\zeta)\right]+\E_{\zeta\sim\normal{\widehat{\xi}_k }{\sigma^2\mathbf{I}}}\left[c(\widehat{\xi}_k,\zeta)\right]\notag{}\\
&\leq\dfrac{1}{\lambda}\Vert f\Vert_{\infty}+m_c\notag{}
\end{align}
where \eqref{eq: bound by Jensen's} uses Jensen's inequality on the convex function $-\log$.
Finally, we have
$$-\partial_{\lambda}\varphi_{\varepsilon}(\bfx,\lambda,\widehat{\xi}_k)\leq\dfrac{2\Vert f\Vert_{\infty}}{\lambda}+m_c.$$
Assuming $\varrho>m_c$ and setting $\lambda_{\max}\eqdef\Quotient{2\Vert f\Vert_{\infty}}{(\varrho-m_c)}$, we have $0\leq \varrho+\partial_{\lambda}\varphi_{\varepsilon}(\bfx,\lambda_{\max},\widehat{\xi}_k)$ for all $\bfx\in\Xset$ and $k\in\llbracket N\rrbracket$. In particular, for all $\bfx\in\Xset$:
\begin{equation}
0\leq \varrho+\E_{\widehat{\xi}\sim\widehat{\Proba}_N}\left[\partial_{\lambda}\varphi_{\varepsilon}(\bfx,\lambda_{\max},\widehat{\xi}_k)\right]=\partial_{\lambda}\left( \lambda\varrho+\E_{\widehat{\xi}\sim\widehat{\Proba}_N}\left[\varphi_{\varepsilon}(\bfx,\lambda_{\max},\widehat{\xi}_k)\right]\right)=\partial_\lambda F(\bfx,\lambda_{\max}).\label{eq: bounds on the partial derivative in lambda}
\end{equation}
Thanks to \Cref{prop: F is convex}, we have that $F(\bfx,\cdot)$ is convex for any fixed $\bfx\in\Xset$, thus \Cref{eq: bounds on the partial derivative in lambda} ensures that the dual minimizer $\lambda^\star$ on $\R_+$ may be found on $[0,\lambda_{\max}]$.
\end{proof}

\Cref{prop: upper bound on lambda max} proposes an upper bound on the optimal dual variable $\lambda^\star\leq\lambda_{\max}$. Under \Cref{assump: continuity of the cost function for Wasserstein}, this upper bound can be made explicit by replacing the quantity $m_c$ defined in the proposition.

\begin{Corollary}\label{coro: bound on lambda_max}
Under \Cref{assump: f is sufficiently smooth,assump: Xi is compact,assump: continuity of the cost function for Wasserstein}, if $\varrho>\bfL_c\sigma^2d$, then any optimal dual parameter $\lambda^\star$ of \Cref{eq: regularized WDRO with duality} can be found within $[0,\lambda_{\max}]$ with $\lambda_{\max}\eqdef\Quotient{2\Vert f\Vert_{\infty}}{(\varrho-\bfL_c\sigma^2d)}$.
\end{Corollary}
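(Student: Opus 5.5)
The plan is to deduce the corollary from \Cref{prop: upper bound on lambda max}. It suffices to show that $m_c\leq\bfL_c\sigma^2 d$.

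Then the hypothesis $\varrho>\bfL_c\sigma^2d$ gives $\varrho>m_c$, so the proposition applies. Its bound $\Quotient{2\Vert f\Vert_{\infty}}{(\varrho-m_c)}$ is dominated by $\Quotient{2\Vert f\Vert_{\infty}}{(\varrho-\bfL_c\sigma^2d)}$, because $\varrho-m_c\geq\varrho-\bfL_c\sigma^2d>0$ and $t\mapsto 1/t$ is decreasing on $(0,\infty)$. Since restricting $\lambda$ to a smaller interval that already contains a minimizer still contains it, restricting to the larger interval $[0,\lambda_{\max}]$ with the new $\lambda_{\max}$ loses nothing either.

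The key estimate concerns each $k\in\llbracket N\rrbracket$. Using the upper bound in \Cref{assump: continuity of the cost function for Wasserstein}, I would write
$$\E_{\zeta\sim\normal{\widehat{\xi}_k}{\sigma^2\mathbf{I}}}\left[c(\widehat{\xi}_k,\zeta)\right]\leq\bfL_c\,\E_{\zeta\sim\normal{\widehat{\xi}_k}{\sigma^2\mathbf{I}}}\left[\Vert\zeta-\widehat{\xi}_k\Vert_2^2\right]=\bfL_c\sigma^2 d.$$
The equality holds because $\zeta-\widehat{\xi}_k\sim\normal{0}{\sigma^2\mathbf{I}_d}$, whose squared norm has mean equal to the trace $\sigma^2 d$. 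Taking the maximum over $k$ then gives $m_c\leq\bfL_c\sigma^2d$.

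The main subtlety is that \Cref{assump: continuity of the cost function for Wasserstein} only controls $c$ on $\Xi\times\Xi$, whereas the Gaussian $\normal{\widehat{\xi}_k}{\sigma^2\mathbf{I}}$ is supported on all of $\R^d$. I would handle this in one of two ways, consistent with how the paper treats these integrals.
\begin{itemize}
\item Read the assumption as holding wherever $c$ is evaluated.
\item Use the rejection-sampling convention of \Cref{rem: resample during computation of the gradient}, under which the sampling law is the Gaussian conditioned on $\Xi$.
\end{itemize}
In the second case the bound $\bfL_c\sigma^2d$ no longer follows from the trace computation alone, so I would first try to justify it via the first convention. Otherwise I would simply note that the computation holds for the unrestricted Gaussian, which is how $m_c$ is literally defined in \Cref{prop: upper bound on lambda max}. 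Everything else is a direct substitution.
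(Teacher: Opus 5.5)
Your proposal is correct and follows the paper's proof. You bound $\E_{\zeta\sim\normal{\widehat{\xi}_k}{\sigma^2\mathbf{I}}}[c(\widehat{\xi}_k,\zeta)]$ by $\bfL_c\sigma^2 d$ using the upper cost comparison and the Gaussian second moment, deduce $m_c\leq\bfL_c\sigma^2 d$, and invoke \Cref{prop: upper bound on lambda max} with the larger $\lambda_{\max}$. The domain issue you flag is real, and the paper's proof silently adopts your first convention by computing with the unrestricted Gaussian.
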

\begin{proof}
Let $k\in\llbracket N\rrbracket$. Then $\E_{\zeta\sim\normal{\widehat{\xi}_k}{\sigma^2\mathbf{I}}}\left[c(\widehat{\xi}_k,\zeta)\right]\leq\bfL_c\E_{\zeta\sim\normal{\widehat{\xi}_k}{\sigma^2\mathbf{I}}}\left[\Vert\widehat{\xi}_k-\zeta\Vert^2_2\right]$ by \Cref{assump: continuity of the cost function for Wasserstein}. We can change variable for $\zeta=\widehat{\xi}_k+\sigma\mathbf{Z}$ and $\Vert\widehat{\xi}_k-\zeta\Vert^2_2=\sigma^2\Vert\mathbf{Z}\Vert^2_2=\sigma^2(\mathbf{Z}^2_1+\dots+\mathbf{Z}^d_1)$, with $\mathbf{Z}_{i}\sim\normal{0}{1}$ for all $i\in\llbracket d\rrbracket$. In particular $\E_{\zeta\sim\normal{\widehat{\xi}_k}{\sigma^2\mathbf{I}}}\left[c(\widehat{\xi}_k,\zeta)\right]\leq\bfL_c\sigma^2\left(\E_{\normal{0}{1}}\left[\mathbf{Z}_1^2\right]+\dots+\E_{\normal{0}{1}}\left[\mathbf{Z}_d^2\right]\right).$ Integration by part gives $\E_{\normal{0}{1}}\left[\mathbf{Z}_i^2\right]=1$, such that $\E_{\zeta\sim\normal{\widehat{\xi}_k}{\sigma^2\mathbf{I}}}\left[c(\widehat{\xi}_k,\zeta)\right]\leq\bfL_c\sigma^2d$. Since this is true for all $k\in\llbracket N\rrbracket$, defining $m_c$ as in \Cref{prop: upper bound on lambda max}, we have $\Quotient{2\Vert f\Vert_{\infty}}{(\varrho-m_c)}\leq\lambda_{\max}\eqdef\Quotient{2\Vert f\Vert_{\infty}}{(\varrho-\bfL_c\sigma^2d)}$, and \Cref{prop: upper bound on lambda max} gives the wanted result.
\end{proof}

A first consequence of \Cref{prop: upper bound on lambda max,coro: bound on lambda_max} is the Lipschitz continuity of the gradients, which we formalize in the following lemma.

\begin{Lemma}\label{prop: F is L lipschitz}
Let \Cref{assump: f is sufficiently smooth,assump: Xi is compact} hold. Then there exists $\bfL_F>0$ such that 
$\nabla F$ is $\bfL_F$-Lipschitz on $\Xset\times [0,\lambda_{\max}]$.
\end{Lemma}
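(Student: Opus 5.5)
The plan is to deduce Lipschitz continuity of $\nabla F$ from the fact, established in \Cref{prop: F is convex}, that $F$ is $\mathcal{C}^2$, together with compactness of the domain $\Xset\times[0,\lambda_{\max}]$. Since $\Xset$ is convex and compact by \Cref{assump: Xi is compact} and $[0,\lambda_{\max}]$ is a compact interval (finite thanks to \Cref{prop: upper bound on lambda max,coro: bound on lambda_max}), the product $K\eqdef\Xset\times[0,\lambda_{\max}]$ is convex and compact. For any two points $u,u'\in K$ the segment $[u,u']$ lies in $K$, so the mean value inequality gives
$$\Vert\nabla F(u)-\nabla F(u')\Vert\leq\Big(\sup_{v\in K}\Vert\nabla^2F(v)\Vert\Big)\Vert u-u'\Vert .$$
It therefore suffices to show that $\bfL_F\eqdef\sup_{K}\Vert\nabla^2F\Vert<+\infty$. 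I would argue this in two ways, one as a backup for the other.

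\emph{Continuity argument.} First I would check that $\nabla^2F$ is continuous on $K$, including up to the boundary $\lambda=0$. Then it is bounded on the compact set $K$. This is the quickest route, but it relies on the differentiation-under-the-integral step of \Cref{prop: F is convex}, which was only sketched.

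\emph{Explicit argument.} To make the bound self-contained, I would compute the Hessian blockwise for each $G_k$. Writing $\pi=\bfpi_{\bfx,\lambda}(\cdot|\widehat{\xi}_k)$, one gets:
\begin{itemize}
\item $\nabla^2_{\bfx\bfx}G_k=\E_\pi[\nabla^2_{\bfx}f]+\varepsilon^{-1}\mathrm{Cov}_\pi(\nabla_\bfx f)$;
\item $\nabla^2_{\bfx\lambda}G_k=-\varepsilon^{-1}\mathrm{Cov}_\pi(\nabla_\bfx f,c)$;
\item $\partial^2_{\lambda\lambda}G_k=\varepsilon^{-1}\mathrm{Var}_\pi(c)$.
\end{itemize}
All of these are expectations under $\pi$ of polynomial expressions in $\nabla^2_\bfx f$, $\nabla_\bfx f$ and $c$.

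\emph{Main obstacle: uniform bounds on these moments.} The difficulty is that the Gaussian reference measure $\normal{\widehat{\xi}_k}{\sigma^2\mathbf{I}}$ is not supported on the compact set $\Xi$, so $\Vert\nabla_\bfx f\Vert$ and $c$ are not a priori bounded under $\pi$. I would treat two cases.
\begin{enumerate}
\item If, as in the paper's analysis (the proof of \Cref{prop: upper bound on lambda max} uses $\Vert f\Vert_\infty$ and treats $\normal{\widehat{\xi}_k}{\sigma^2\mathbf{I}}$ as an element of $\mathrm{Proba}(\Xi)$), the sampling is restricted to $\Xi$ through the rejection of \Cref{rem: resample during computation of the gradient}, then every integrand is bounded on $\Xset\times\Xi$. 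Here I use $C^2$ regularity in $\bfx$ with continuity in $\zeta$ and compactness, noting that the needed joint continuity of $\nabla^2_\bfx f$ is implicit in the assumptions. This gives $\Vert\nabla^2 G_k\Vert\leq \sup\Vert\nabla_\bfx^2 f\Vert+\varepsilon^{-1}(C_z+C_c)^2$, uniformly in $(\bfx,\lambda)\in K$.
\item If the Gaussian is genuinely unrestricted, I would bound the density ratio $\mathrm{d}\pi/\mathrm{d}\normal{\widehat{\xi}_k}{\sigma^2\mathbf{I}}$. Its numerator satisfies $\exp((f-\lambda c)/\varepsilon)\leq e^{\Vert f\Vert_\infty/\varepsilon}$ because $c\geq0$. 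Its denominator is bounded below by $e^{-\Vert f\Vert_\infty/\varepsilon}\E[e^{-\lambda_{\max}\bfL_c\Vert\zeta-\widehat{\xi}_k\Vert^2/\varepsilon}]>0$, using \Cref{assump: continuity of the cost function for Wasserstein} and $\lambda\leq\lambda_{\max}$. Gaussian moments of the quadratic-growth quantities then give finite, uniform bounds.
\end{enumerate}
This is precisely where the restriction $\lambda\le\lambda_{\max}$ enters.

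Finally, averaging over $k$ gives the bound for $F$, since the term $\lambda\varrho$ contributes nothing to the Hessian. This yields the constant $\bfL_F$.
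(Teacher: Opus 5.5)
Your main argument is exactly the paper's: \Cref{prop: F is convex} gives $F\in\mathcal{C}^2$, so $\nabla^2F$ is bounded on the compact set $\Xset\times[0,\lambda_{\max}]$, and the mean value theorem then yields $\bfL_F$. You are slightly more careful than the paper, since you require continuity of $\nabla^2F$ and convexity of the domain, whereas the paper's proof only says ``twice differentiable''. The blockwise Hessian bound is an extra the paper does not give, and only its case~1 is backed by the assumptions: case~2 uses $|f|\le\Vert f\Vert_\infty$ and growth bounds on $\nabla_\bfx f$ and $c$ at points $\zeta\notin\Xi$, where nothing is assumed. (Also, your block formulas are those of $\varphi_\varepsilon(\cdot,\cdot,\widehat{\xi}_k)=\varepsilon G_k$, not of the paper's $G_k$.)
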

\begin{proof}
Using \Cref{assump: f is sufficiently smooth,assump: Xi is compact}, \Cref{prop: F is differentiable} shows that our robust objective $F$ is twice differentiable on the compact set $\Xset\times [0,\lambda_{\max}]$. Thus, there exists $\bfL_F>0$ such that
$$\sup_{(\bfx,\lambda)\in\Xset\times [0,\lambda_{\max}]}\Vert\nabla^2 F(\bfx,\lambda)\Vert\leq\bfL_F.$$
The \emph{mean value theorem} allows us to state that $\Vert\nabla F(\bfx_2,\lambda_2)-\nabla F(\bfx_1,\lambda_1)\Vert\leq\bfL_F\cdot\Vert (\bfx_2,\lambda_2)-(\bfx_1,\lambda_1)\Vert$ for any $(\bfx_1,\lambda_1),(\bfx_2,\lambda_2)\in\Xset\times [0,\lambda_{\max}]$, and $F$ is thus $\bfL_F$-smooth on $\Xset\times [0,\lambda_{\max}]$.
\end{proof}

The $\bfL_F$-smoothness of $F$ provides the theoretical foundation for the convergence of first-order methods \citep{nesterov2018lectures,braun2025conditionalgradientmethods}, and is in particular needed for the convergence guarantees of \cref{alg: stochastic FW}. Under this setting, we can establish the rate of convergence of the stochastic Frank-Wolfe algorithm.

\begin{Theorem}\label{thm: convergence of SFW}
Let \Cref{assump: Xi is compact,assump: f is sufficiently smooth,assump: continuity of the cost function for Wasserstein} hold, and suppose $\varrho>\bfL_c\sigma^2d$. With batch size $b$, sampling budget $S$, step-sizes $\alpha_t=2/(t+7)$ and momentum terms $\beta_t=4/(t+8)^{2/3}$, \Cref{alg: Momentum stochastic FW} gives iterates $(\bfx_t,\lambda_t)_{t\geq 0}\in\Xset\times [0,\lambda_{\max}]$ satisfying, for all $t\geq0$,
\[
\E\left[F(\bfx_t,\lambda_t)\right]-F(\bfx^\star,\lambda^\star)
\leq
\frac{Q_{\mathrm{corr}}}{\sqrt[3]{t+9}},
\]
where $(\bfx^\star,\lambda^\star)\in\Xset\times [0,\lambda_{\max}]$ is a minimizer of $F$ and the constant $Q_{\mathrm{corr}}$ is defined as
$$
Q_{\mathrm{corr}}
\eqdef
\max\left\lbrace
\sqrt[3]{9}\left(F(\bfx_0,\lambda_0)-F(\bfx^\star,\lambda^\star)\right),
\frac{\bfL_FD}{2}
+2D\max\left\lbrace
3\Vert\nabla F(\bfx_0,\lambda_0)\Vert,
\sqrt{16\mathsf{V}_{\max}+2\bfL_F^2D^2}
\right\rbrace
\right\rbrace.
$$

The constants appearing above are all well-defined: $\bfL_F$ is the Lipschitz constant for $\nabla F$, 
$D$ is the diameter of $\Xset\times [0,\lambda_{\max}]$, $C$ is such that $\Vert\nabla_{\bfx}f(\bfx,\cdot)\Vert\leq C$ on $\Xi$, $\mu_c$ is defined in the ground cost assumption, and $C_c\eqdef\sup_{\xi,\zeta\in\Xi}c(\xi,\zeta)<+\infty$. Finally
\[
\mathsf{V}_{\max}
\eqdef
\frac{4(C^2+C_c^2)}{bS}
\exp\left(\frac{4\Vert f\Vert_\infty}{\varepsilon}\right)
\left(1+\frac{4\lambda_{\max}\mu_c\sigma^2}{\varepsilon}\right)^{-d/2}
\left(1+\frac{2\lambda_{\max}\bfL_c\sigma^2}{\varepsilon}\right)^d
+
\frac{N-b}{b(N-1)}(C^2+C_c^2).
\]
\end{Theorem}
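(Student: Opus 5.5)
The plan is to reduce the statement to the generic convergence theorem for momentum stochastic Frank--Wolfe of \citet{mokhtari2020stochastic}, in the form given in \citet{braun2025conditionalgradientmethods}. That result, for an $\bfL_F$-smooth convex function over a compact convex set of diameter $D$, with $\alpha_t=2/(t+7)$ and $\beta_t=4/(t+8)^{2/3}$, gives exactly the $Q_{\mathrm{corr}}/\sqrt[3]{t+9}$ rate with the displayed constant. It needs two inputs beyond smoothness and convexity: the gradient estimator must be unbiased, and its variance must be bounded by some $\mathsf{V}_{\max}$. So the proof has three parts: verifying the structural hypotheses on $\Xset\times[0,\lambda_{\max}]$, establishing the variance bound, and then carrying out the standard recursion.

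\textbf{Structural hypotheses.} The domain $\Xset\times[0,\lambda_{\max}]$ is compact and convex by \Cref{assump: Xi is compact} and \Cref{coro: bound on lambda_max}. The corollary also guarantees that restricting $\lambda$ to $[0,\lambda_{\max}]$ loses no minimizer, so $(\bfx^\star,\lambda^\star)$ exists in the box. The LMO over the product set splits into the LMO over $\Xset$ and a sign test on $[0,\lambda_{\max}]$, and iterates stay feasible as convex combinations. Convexity of $F$ comes from \Cref{prop: F is convex}, and $\bfL_F$-smoothness from \Cref{prop: F is L lipschitz}.

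\textbf{Standard recursion.} I would run the usual two-step argument. First, I would bound $\E\Vert \widehat\nabla F_t-\nabla F(\bfx_t,\lambda_t)\Vert^2$ by induction. The error is written as $(1-\beta_t)$ times the previous error, plus $(1-\beta_t)$ times the gradient drift, plus $\beta_t$ times the fresh noise. The drift is at most $\bfL_F\alpha_{t-1}D$, and Young's inequality gives a bound of the form $Q/(t+8)^{2/3}$ with $Q=\max\{9\Vert\nabla F(\bfx_0,\lambda_0)\Vert^2,16\mathsf{V}_{\max}+2\bfL_F^2D^2\}$. Second, I would combine the smoothness descent inequality with the FW gap and Cauchy--Schwarz:
\[
h_{t+1}\le(1-\alpha_t)h_t+\alpha_t D\,\E\Vert\widehat\nabla F_t-\nabla F_t\Vert+\tfrac{\bfL_F}{2}\alpha_t^2D^2.
\]
Induction then closes with the rate $(t+9)^{-1/3}$. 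I will mostly cite this part rather than redo it.

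\textbf{Variance bound (main obstacle).} The real work is showing that $\mathsf{V}_{\max}$ bounds $\E\Vert\mathcal{G}^{\mathscr{J}}-\nabla F\Vert^2$. There is a subtlety here: the self-normalized estimator is only asymptotically unbiased. I would handle this either by treating the bias as absorbed, as the paper implicitly does, or by bounding the mean-squared error directly, since that is all the recursion uses.

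I would split the error into a mini-batch term and a Monte Carlo term. The mini-batch term comes from sampling without replacement: each summand has norm at most $\sqrt{C^2+C_c^2}$, and the finite-population correction $\frac{N-b}{b(N-1)}$ gives the second term of $\mathsf{V}_{\max}$.

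For the Monte Carlo term, I would use the standard bound for self-normalized importance sampling. For a bounded $\varphi$, the error is at most $\frac{4\Vert\varphi\Vert_\infty^2}{S}\,\frac{\E[\omega^2]}{(\E\omega)^2}$. The ratio needs estimates on the weights $\omega$ under the Gaussian:
\begin{itemize}
\item For the numerator, use $f\le\Vert f\Vert_\infty$ and $c\ge\mu_c\Vert\cdot\Vert^2$, so that $\E\omega^2\le e^{2\Vert f\Vert_\infty/\varepsilon}\,\E e^{-2\lambda\mu_c\Vert\xi-\zeta\Vert^2/\varepsilon}$.
\item For the denominator, use $f\ge-\Vert f\Vert_\infty$ and $c\le\bfL_c\Vert\cdot\Vert^2$, so that $\E\omega\ge e^{-\Vert f\Vert_\infty/\varepsilon}\,\E e^{-\lambda\bfL_c\Vert\xi-\zeta\Vert^2/\varepsilon}$.
\end{itemize}
Both Gaussian integrals are explicit: $\E e^{-a\Vert\sigma Z\Vert^2}=(1+2a\sigma^2)^{-d/2}$. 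This produces the factors $(1+4\lambda\mu_c\sigma^2/\varepsilon)^{-d/2}$ and $(1+2\lambda\bfL_c\sigma^2/\varepsilon)^{d}$. The second factor grows with $\lambda$, so I would evaluate it at $\lambda_{\max}$. The first factor decreases in $\lambda$, so its worst case is $\lambda=0$. The paper writes it at $\lambda_{\max}$, so I expect some care (or a looser bound) is needed at this step. Averaging over the $b$ independent batch elements supplies the $1/b$ factor.
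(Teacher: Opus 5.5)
Your skeleton matches the paper's. You invoke the momentum stochastic Frank--Wolfe theorem (Theorem~4.12 of the cited survey) after checking convexity, $\bfL_F$-smoothness and compactness of $\Xset\times[0,\lambda_{\max}]$. You then build $\mathsf{V}_{\max}$ from a finite-population mini-batch term plus a self-normalized importance-sampling term, using the same Gaussian bounds on $\E[\omega^2]$ and $\E[\omega]$. Your SNIS inequality is in fact a rigorous non-asymptotic bound. Let $\widehat\varphi$ be the self-normalized estimate, $\bar\varphi$ its target, and $\widehat{\E}$ the empirical mean over the $S$ draws. Write $\widehat\varphi-\bar\varphi=\bigl(\widehat{\E}(\omega\varphi)-\E(\omega\varphi)\bigr)/\E\omega+\widehat\varphi\,\bigl(\E\omega-\widehat{\E}\omega\bigr)/\E\omega$ and use $\Vert\widehat\varphi\Vert\le\Vert\varphi\Vert_\infty$. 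This gives the first term of $\mathsf{V}_{\max}$ with no hidden constant, unlike the paper's delta-method ``$\lesssim$''.

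\textbf{The gap is the bias.} Your fallback that the recursion only uses a mean-squared-error bound is false. Write $\nabla F_t\eqdef\nabla F(\bfx_t,\lambda_t)$ and $d_t-\nabla F_t=\beta_t(\mathcal{G}_t-\nabla F_t)+(1-\beta_t)(\nabla F_{t-1}-\nabla F_t)+(1-\beta_t)(d_{t-1}-\nabla F_{t-1})$, then square. The cross terms between the fresh noise and the other two pieces vanish only because $\E[\mathcal{G}_t\mid\text{past}]=\nabla F_t$. That is exactly what turns the noise into a $\beta_t^2\mathsf{V}_{\max}$ contribution. At fixed $S$, the self-normalized estimator has a conditional bias $B_t$ of order $1/S$; for $S=1$ its mean is the untilted Gaussian average of $\nabla_{\bfx}f$. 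So a term of order $\beta_t\Vert B_t\Vert\,\E\Vert d_{t-1}-\nabla F_{t-1}\Vert$ survives. Against a contraction of only $1-\beta_t/2$, the recursion cannot bring $\E\Vert d_t-\nabla F_t\Vert^2$ below order $\Vert B\Vert^2$; indeed the momentum average tracks $\nabla F+B$. You end with a suboptimality floor of order $D\Vert B\Vert$, not a rate to zero. ``Absorbing the bias'' is not an argument either. The paper's one-line proof has exactly the same hole: it feeds only the variance lemma into Theorem~4.12, although the estimator is only asymptotically unbiased.

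\textbf{The missing idea.} $\mathcal{G}^{\mathscr{J}}$ is an exactly unbiased gradient estimator of a different convex function,
\[
F_S(\bfx,\lambda)\eqdef\lambda\varrho+\frac{\varepsilon}{N}\sum_{k=1}^N\E\left[\log\left(\frac1S\sum_{s=1}^S\exp\left(\frac{f(\bfx,\zeta^{(k)}_s)-\lambda c(\widehat{\xi}_k,\zeta^{(k)}_s)}{\varepsilon}\right)\right)\right].
\]
Differentiating inside the expectation produces exactly the weights $w^{(k)}_s/\Vert w^{(k)}\Vert_1$. With the mini-batch lemma this gives $\E[\mathcal{G}^{\mathscr{J}}(\bfx,\lambda)]=\nabla F_S(\bfx,\lambda)$. $F_S$ is convex, being an expectation of log-sum-exps of functions convex in $(\bfx,\lambda)$, and smooth by the same argument as for $F$.

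Its variance is bounded by your MSE bound, since the mean minimizes the mean squared deviation. Do the batch averaging through the law of total variance, as the paper does, not on MSEs, because bias parts do not shrink with $b$. Theorem~4.12 then applies verbatim to $F_S$. Since $F_S\le F$ by Jensen, you obtain for $F$ the displayed rate (with constants computed for $F_S$) plus the additive term $\sup_{\Xset\times[0,\lambda_{\max}]}(F-F_S)$. That term vanishes as $S\to\infty$ but not as $t\to\infty$. So the statement exactly as written does not follow along this route.

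\textbf{Smaller points.}
\begin{itemize}
\item Your worry about evaluating at $\lambda_{\max}$ dissolves. With $u\eqdef4\mu_c\sigma^2/\varepsilon$ and $v\eqdef2\bfL_c\sigma^2/\varepsilon$, the product of the two factors equals $\bigl[(1+v\lambda)^2/(1+u\lambda)\bigr]^{d/2}$, and
\[
\frac{\mathrm{d}}{\mathrm{d}\lambda}\,\frac{(1+v\lambda)^2}{1+u\lambda}=\frac{(1+v\lambda)(2v-u+uv\lambda)}{(1+u\lambda)^2}\geq0,
\]
since $u\le2v$ is exactly $\mu_c\le\bfL_c$. The product is therefore nondecreasing, and its value at $\lambda_{\max}$ bounds it on $[0,\lambda_{\max}]$. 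Bounding the two factors separately would give a strictly larger constant than the displayed $\mathsf{V}_{\max}$.
\item Your descent step produces $\bfL_FD^2/2$, not the displayed $\bfL_FD/2$. The latter is dimensionally inconsistent with the other terms of $Q_{\mathrm{corr}}$ and is presumably a typo.
\item Like the paper, you apply $\Vert f\Vert_\infty$, $C$ and $C_c$, which are suprema over $\Xi$, to Gaussian draws that leave $\Xi$. Under an untruncated Gaussian, $c(\widehat{\xi}_k,\cdot)$ is unbounded. You must either confine the samples to $\Xi$ (e.g.\ by rejection sampling, which changes the exact Gaussian integrals) or assume bounds on all of $\R^d$.
\end{itemize}
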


The proof relies on the fact that under the given hypothesis, we can give explicit bounds on the variance of the gradient estimator. This is developed in the next important technical lemma.

\begin{Lemma}[Corrected variance bound for the gradient estimator]\label{lemma: variance of the gradient estimator}
Under the notation and the assumptions of \Cref{thm: convergence of SFW}.
For a mini-batch $\mathscr{J}$ sampled uniformly without replacement among all subsets of $\llbracket N\rrbracket$ with size $b$, and for a Monte Carlo budget $S$, the $\bfx$-component of the gradient estimator satisfies the total variance bound
\[
\mathrm{Var}\left[\mathcal{G}_{\bfx}^{\mathscr{J}}(\bfx,\lambda)\right]
\lesssim
\frac{4C^2}{bS}
\exp\left(\frac{4\Vert f\Vert_\infty}{\varepsilon}\right)
\left(1+\frac{4\lambda\mu_c\sigma^2}{\varepsilon}\right)^{-d/2}
\left(1+\frac{2\lambda\bfL_c\sigma^2}{\varepsilon}\right)^d
+
\frac{N-b}{b(N-1)}C^2.
\]
The same bound holds for the $\lambda$-component after replacing $C$ by $C_c$. Consequently, for the full estimator,
\[
\mathrm{Var}\left[\mathcal{G}^{\mathscr{J}}(\bfx,\lambda)\right]
\lesssim
\frac{4(C^2+C_c^2)}{bS}
\exp\left(\frac{4\Vert f\Vert_\infty}{\varepsilon}\right)
\left(1+\frac{4\lambda\mu_c\sigma^2}{\varepsilon}\right)^{-d/2}
\left(1+\frac{2\lambda\bfL_c\sigma^2}{\varepsilon}\right)^d
+
\frac{N-b}{b(N-1)}(C^2+C_c^2)
\]
where $\lesssim$ means that the left-hand sides are bounded up to a multiplicative constant, independent of the parameters.
\end{Lemma}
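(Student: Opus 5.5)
I will split the variance with the law of total variance, conditioning on the Monte Carlo $\sigma$-algebra $\mathcal{F}_S$ generated by all samples $(\zeta^{(k)}_s)$, exactly as in the unbiasedness proposition. Let $Y_{k,S}$ denote the self-normalized local estimator. Then
\[
\mathrm{Var}\left[\mathcal{G}_{\bfx}^{\mathscr{J}}\right]=\E\left[\mathrm{Var}\left[\tfrac{1}{b}\textstyle\sum_{k\in\mathscr{J}}Y_{k,S}\,\middle|\,\mathcal{F}_S\right]\right]+\mathrm{Var}\left[\E\left[\tfrac{1}{b}\textstyle\sum_{k\in\mathscr{J}}Y_{k,S}\,\middle|\,\mathcal{F}_S\right]\right].
\]
The first term is a mini-batch sampling-without-replacement term. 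The classical finite-population formula, in the spirit of \Cref{Lemma: mini-batch unbiasedness}, gives conditional variance $\frac{N-b}{b(N-1)}\cdot\frac1N\sum_k\Vert Y_{k,S}-\bar Y\Vert^2\le\frac{N-b}{b(N-1)}C^2$, since each $\Vert Y_{k,S}\Vert\le C$ as a convex combination. This produces the second summand of the bound. The second term equals $\mathrm{Var}[\frac1N\sum_k Y_{k,S}]$. Because the local estimators are independent across $k$, this is $\frac1{N^2}\sum_k\mathrm{Var}[Y_{k,S}]$, which is at most $\frac1b\max_k\mathrm{Var}[Y_{k,S}]$. So the remaining task is to bound the self-normalized importance-sampling variance for a single $k$.

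For fixed $k$, write $\omega=\omega_k$ and let $\pi=\bfpi_{\bfx,\lambda}(\cdot|\widehat\xi_k)$, $\Q=\normal{\widehat\xi_k}{\sigma^2\mathbf I}$, and $\varphi=\nabla_\bfx f(\bfx,\cdot)$ with $\Vert\varphi\Vert\le C$. I will use the standard delta-method / self-normalized IS bound
\[
\mathrm{Var}[Y_{k,S}]\lesssim\frac{1}{S}\,\frac{\E_\Q[\omega^2\Vert\varphi-\E_\pi\varphi\Vert^2]}{(\E_\Q\omega)^2}\le\frac{4C^2}{S}\,\frac{\E_\Q[\omega^2]}{(\E_\Q\omega)^2}.
\]
This holds up to the multiplicative constant hidden in $\lesssim$, which is why the statement is only up to constants. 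If a nonasymptotic version is wanted, I would write $Y-\E_\pi\varphi$ as a ratio, use boundedness by $2C$ on the event where the empirical denominator is small, and use a Chebyshev/Cauchy–Schwarz argument elsewhere. The $4C^2$ comes from $\Vert\varphi-\E_\pi\varphi\Vert\le 2C$.

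The main obstacle is bounding the second moment ratio $\E_\Q[\omega^2]/(\E_\Q\omega)^2$, and I will do this with Gaussian integrals. For the numerator, use $f\le\Vert f\Vert_\infty$ and $c\ge\mu_c\Vert\cdot\Vert^2$:
\[
\E_\Q\omega^2\le e^{2\Vert f\Vert_\infty/\varepsilon}\,\E\,e^{-2\lambda\mu_c\Vert\sigma Z\Vert^2/\varepsilon}=e^{2\Vert f\Vert_\infty/\varepsilon}\left(1+\tfrac{4\lambda\mu_c\sigma^2}{\varepsilon}\right)^{-d/2}.
\]
For the denominator, use $f\ge-\Vert f\Vert_\infty$ and $c\le\bfL_c\Vert\cdot\Vert^2$:
\[
\E_\Q\omega\ge e^{-\Vert f\Vert_\infty/\varepsilon}\left(1+\tfrac{2\lambda\bfL_c\sigma^2}{\varepsilon}\right)^{-d/2}.
\]
Squaring the denominator bound and dividing gives exactly $e^{4\Vert f\Vert_\infty/\varepsilon}(1+4\lambda\mu_c\sigma^2/\varepsilon)^{-d/2}(1+2\lambda\bfL_c\sigma^2/\varepsilon)^{d}$. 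Combining the three steps yields the $\bfx$-component bound.

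The $\lambda$-component follows verbatim with $\varphi=c(\widehat\xi_k,\cdot)$ bounded by $C_c$; the constant $\varrho$ does not affect the variance. The total variance is the sum of the component variances, which gives the final bound with $C^2+C_c^2$.
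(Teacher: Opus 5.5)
Your proof is correct. Its analytic core, namely the delta-method bound for the self-normalized estimator and the Gaussian integrals controlling $\E_{\Q}[\omega^2]$ and $\E_{\Q}\omega$, is the same as the paper's. What differs is the variable you condition on in the law of total variance.

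The paper conditions on the mini-batch $\mathscr{J}$. Its inner term $\E\bigl[\frac{1}{b^2}\sum_{k\in\mathscr{J}}\mathrm{Var}[Y_{k,S}]\bigr]$ is then purely Monte Carlo, and its outer term is the finite-population variance of the means $\E[Y_{k,S}]$, bounded through Jensen. You instead condition on $\mathcal{F}_S$. You apply the finite-population formula to the realized $Y_{k,S}$, bounded pointwise by $C$, and are left with the variance $\frac{1}{N^2}\sum_k\mathrm{Var}[Y_{k,S}]$ of the full-batch estimator.

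The paper's order mirrors the algorithm, where only indices in $\mathscr{J}$ are sampled, and it separates the two noise sources cleanly. Yours uses the harmless coupling in which samples are drawn for every $k$, which the unbiasedness proposition already uses. It leaves a share $\frac{N-b}{bN^2}\sum_k\mathrm{Var}[Y_{k,S}]$ of the Monte Carlo noise inside the mini-batch term. That is why your Monte Carlo term comes with the sharper factor $1/N$. After the crude bounds by $C^2$ and $1/N\le 1/b$, the two routes give the same estimate.

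As for the nonasymptotic version you mention, no event splitting is needed. Let $\hat A=\frac{1}{S}\sum_s\varphi(\zeta_s)\omega(\zeta_s)$ and $\hat B=\frac{1}{S}\sum_s\omega(\zeta_s)$, with means $A$ and $B$. The identity $Y_{k,S}-\E_{\pi}\varphi=\bigl((\hat A-A)-Y_{k,S}(\hat B-B)\bigr)/B$ together with $\Vert Y_{k,S}\Vert\le C$ gives $\mathrm{Var}[Y_{k,S}]\le\frac{4C^2}{S}\,\E_{\Q}[\omega^2]/(\E_{\Q}\omega)^2$ for every $S$. So the $\lesssim$ of the lemma can be replaced by $\le$, both in your proof and in the paper's.
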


\begin{proof}
We give the proof for the $\bfx$-component. The proof for the $\lambda$-component is identical, replacing the bounded vector $\nabla_{\bfx}f(\bfx,\cdot)$ by the bounded scalar $c(\widehat{\xi}_k,\cdot)$. For $k\in\llbracket N\rrbracket$, define the local self-normalized estimator
\[
\mathbf{G}_{k,S}(\bfx,\lambda)
\eqdef
\sum_{s=1}^S\nabla_{\bfx}f(\bfx,\zeta_s^{(k)})
\frac{w_s^{(k)}}{\Vert w^{(k)}\Vert_1},
\qquad
w_s^{(k)}
\eqdef
\exp\left(\frac{f(\bfx,\zeta_s^{(k)})-\lambda c(\widehat{\xi}_k,\zeta_s^{(k)})}{\varepsilon}\right),
\]
with $\zeta_s^{(k)}\sim\normal{\widehat{\xi}_k}{\sigma^2\mathbf{I}}$ independent across $s$ and $k$. The global estimator is
\[
\mathcal{G}_{\bfx}^{\mathscr{J}}(\bfx,\lambda)
=\frac{1}{b}\sum_{k\in\mathscr{J}}\mathbf{G}_{k,S}(\bfx,\lambda).
\]
Conditionally on the mini-batch $\mathscr{J}$, the variables $\mathbf{G}_{k,S}$ are independent across $k$. Therefore,
\[
\mathrm{Var}_{\mathrm{MC}}\left[
\mathcal{G}_{\bfx}^{\mathscr{J}}(\bfx,\lambda)
\middle|\mathscr{J}
\right]
=
\frac{1}{b^2}\sum_{k\in\mathscr{J}}
\mathrm{Var}_{\mathrm{MC}}\left[\mathbf{G}_{k,S}(\bfx,\lambda)\right]
\leq
\frac{1}{b}\sup_{k\in\llbracket N\rrbracket}
\mathrm{Var}_{\mathrm{MC}}\left[\mathbf{G}_{k,S}(\bfx,\lambda)\right].
\]
It remains to bound the local variance uniformly in $k$.

To do so, we fix $k$ and write
\[
w_k(\zeta)
\eqdef
\exp\left(\frac{f(\bfx,\zeta)-\lambda c(\widehat{\xi}_k,\zeta)}{\varepsilon}\right),
\qquad
\mu_k
\eqdef
\frac{\E\left[\nabla_{\bfx}f(\bfx,\zeta)w_k(\zeta)\right]}
{\E\left[w_k(\zeta)\right]},
\]
where the expectations are taken with respect to $\zeta\sim\normal{\widehat{\xi}_k}{\sigma^2\mathbf{I}}$. The classical delta-method linearization of the self-normalized estimator (see \eg \cite{owen2013montecarlo}) gives
\[
\mathrm{Var}_{\mathrm{MC}}\left[\mathbf{G}_{k,S}(\bfx,\lambda)\right]
\lesssim
\frac{1}{S}
\frac{\E\left[\Vert\nabla_{\bfx}f(\bfx,\zeta)-\mu_k\Vert^2w_k(\zeta)^2\right]}
{\E\left[w_k(\zeta)\right]^2}.
\]
Since $\Vert\nabla_{\bfx}f(\bfx,\cdot)\Vert\leq C$, we also have $\Vert\mu_k\Vert\leq C$, and hence
$
\Vert\nabla_{\bfx}f(\bfx,\zeta)-\mu_k\Vert^2\leq 4C^2.
$
Thus,
\[
\mathrm{Var}_{\mathrm{MC}}\left[\mathbf{G}_{k,S}(\bfx,\lambda)\right]
\lesssim
\frac{4C^2}{S}
\frac{\E\left[w_k(\zeta)^2\right]}
{\E\left[w_k(\zeta)\right]^2}.
\]
The numerator is bounded using the lower comparison
$c(\widehat{\xi}_k,\zeta)\geq\mu_c\Vert\widehat{\xi}_k-\zeta\Vert_2^2$
and $\vert f\vert\leq\Vert f\Vert_\infty$:
\[
\E\left[w_k(\zeta)^2\right]
\leq
\exp\left(\frac{2\Vert f\Vert_\infty}{\varepsilon}\right)
\E\left[
\exp\left(-\frac{2\lambda\mu_c}{\varepsilon}
\Vert\widehat{\xi}_k-\zeta\Vert_2^2\right)
\right].
\]
With $\zeta=\widehat{\xi}_k+\sigma Z$, $Z\sim\normal{0}{\mathbf{I}}$, this gives the uniform bound
\[
\E\left[w_k(\zeta)^2\right]
\leq
\exp\left(\frac{2\Vert f\Vert_\infty}{\varepsilon}\right)
\left(1+\frac{4\lambda\mu_c\sigma^2}{\varepsilon}\right)^{-d/2}.
\]
For the denominator, \Cref{assump: continuity of the cost function for Wasserstein} also gives the upper comparison
\[
c(\widehat{\xi}_k,\zeta)\leq \bfL_c\Vert\widehat{\xi}_k-\zeta\Vert_2^2.
\]
Since the cost appears with a negative sign in the exponential, we get
\[
w_k(\zeta)
\geq
\exp\left(-\frac{\Vert f\Vert_\infty}{\varepsilon}\right)
\exp\left(-\frac{\lambda\bfL_c}{\varepsilon}
\Vert\widehat{\xi}_k-\zeta\Vert_2^2\right)
\quad\text{and}\quad\E\left[w_k(\zeta)\right]^2
\geq
\exp\left(-\frac{2\Vert f\Vert_\infty}{\varepsilon}\right)
\left(1+\frac{2\lambda\bfL_c\sigma^2}{\varepsilon}\right)^{-d}.
\]
Combining the bounds above gives, uniformly in $k$, a bound on $\mathrm{Var}_{\mathrm{MC}}\left[\mathbf{G}_{k,S}(\bfx,\lambda)\right]$ and directly
\[
\E_{\mathscr{J}}\left[
\mathrm{Var}_{\mathrm{MC}}\left[
\mathcal{G}_{\bfx}^{\mathscr{J}}(\bfx,\lambda)
\middle|\mathscr{J}
\right]\right]
\lesssim
\frac{4C^2}{bS}
\exp\left(\frac{4\Vert f\Vert_\infty}{\varepsilon}\right)
\left(1+\frac{4\lambda\mu_c\sigma^2}{\varepsilon}\right)^{-d/2}
\left(1+\frac{2\lambda\bfL_c\sigma^2}{\varepsilon}\right)^d.
\]

It remains to account for the randomness of the mini-batch. Let
\[
m_{k,S}(\bfx,\lambda)\eqdef
\E_{\mathrm{MC}}\left[\mathbf{G}_{k,S}(\bfx,\lambda)\right],
\qquad
\bar m_S(\bfx,\lambda)
\eqdef
\frac{1}{N}\sum_{k=1}^N m_{k,S}(\bfx,\lambda).
\]
The finite-population variance formula for uniform sampling without replacement gives
\[
\mathrm{Var}_{\mathscr{J}}\left[
\frac{1}{b}\sum_{k\in\mathscr{J}}m_{k,S}(\bfx,\lambda)
\right]
=
\frac{N-b}{b(N-1)}
\cdot
\frac{1}{N}\sum_{k=1}^N
\left\Vert m_{k,S}(\bfx,\lambda)-\bar m_S(\bfx,\lambda)\right\Vert^2.
\]
Each $\mathbf{G}_{k,S}$ is a convex combination of gradients with norm at most $C$: Jensen's inequality gives $\Vert m_{k,S}(\bfx,\lambda)\Vert\leq C$. Therefore,
\[
\frac{1}{N}\sum_{k=1}^N
\left\Vert m_{k,S}(\bfx,\lambda)-\bar m_S(\bfx,\lambda)\right\Vert^2
\leq
\frac{1}{N}\sum_{k=1}^N
\left\Vert m_{k,S}(\bfx,\lambda)\right\Vert^2
\leq C^2.
\]
The law of total variance now gives
\[
\mathrm{Var}\left[\mathcal{G}_{\bfx}^{\mathscr{J}}(\bfx,\lambda)\right]
=
\E_{\mathscr{J}}\left[
\mathrm{Var}_{\mathrm{MC}}\left[
\mathcal{G}_{\bfx}^{\mathscr{J}}(\bfx,\lambda)
\middle|\mathscr{J}
\right]\right]
+
\mathrm{Var}_{\mathscr{J}}\left[
\E_{\mathrm{MC}}\left[
\mathcal{G}_{\bfx}^{\mathscr{J}}(\bfx,\lambda)
\middle|\mathscr{J}
\right]\right],
\]
which proves the announced bound for the $\bfx$-component.

For the $\lambda$-component, the estimator is a constant $\varrho$ minus a mini-batch average of self-normalized averages of $c(\widehat{\xi}_k,\zeta)$. Since $0\leq c(\xi,\zeta)\leq C_c$, the same proof applies with $C$ replaced by $C_c$. Summing the componentwise bounds gives the stated bound for the full gradient estimator.
\end{proof}

From the previous lemma, the proof of the convergence theorem then follows from existing convergence results \citep{braun2025conditionalgradientmethods}.

\begin{proof}[Proof of \Cref{thm: convergence of SFW}]
Use Theorem 4.12 in \citep{braun2025conditionalgradientmethods}. The assumptions needed there are satisfied because $F$ is convex and $\bfL_F$-smooth, $\Xset\times [0,\lambda_{\max}]$ has diameter $D$, and the stochastic gradients have uniformly bounded variance by \Cref{lemma: variance of the gradient estimator}. As \Cref{coro: bound on lambda_max} states, we can bound each $\lambda_t$ by $\lambda_{\max}\leq \Quotient{2\Vert f\Vert_{\infty}}{(\varrho-\bfL_c\sigma^2d)}$. Setting this as an upper bound for the iterates gives the result.
\end{proof}

\section{Numerical illustrations}\label{sec: Numerical Illustrations}

We illustrate our framework on our two running examples, the \TA and \ST. We first describe the common experimental protocol and the way the WDRO and ERM baselines are compared. We then study the two applications separately, highlighting how the distributionally robust approach adapts to different structural environments: the continuous \TA in \Cref{subsec: uncertain ta} and the relaxed \ST in \Cref{subsec: uncertain st}.

\subsection{Experimental setting}\label{subsec: experimental setting}

We compare the robustness of solutions obtained with our WDRO framework against solutions obtained by standard empirical risk minimization. Given a training data set $\widehat{\xi}_1,\dots,\widehat{\xi}_{N_{\text{train}}}$, both approaches use the same feasible region and the same linear minimization oracle; they differ only in the objective and in the gradient information used by the first-order method.

\begin{table}[H]
\centering
{\tabulinesep=1.2mm
\begin{tabu} to \linewidth {|c|X[c]|X[c]|}
\hline
 & ERM & WDRO \\ \hline
Objective & $\dfrac{1}{N_{\text{train}}}\displaystyle\sum_{k=1}^{N_{\text{train}}}f(\bfx,\widehat{\xi}_k)$ & $F(\bfx,\lambda)$ as in \Cref{eq: regularized WDRO with duality} \\ \hline
Gradient & $\dfrac{1}{N_{\text{train}}}\displaystyle\sum_{k=1}^{N_{\text{train}}}\nabla_{\bfx}f(\bfx,\widehat{\xi}_k)$ & $\mathcal{G}^{\mathscr{J}}(\bfx,\lambda)$ as in \Cref{eq: gradient estimate} \\ \hline
LMO & \multicolumn{2}{c|}{Same linear oracle} \\ \hline
Algorithm & \emph{Classical Frank-Wolfe} & \Cref{alg: Momentum stochastic FW} \\ \hline
\end{tabu}}
\end{table}

Once both solutions are computed, we evaluate their out-of-sample behavior on shifted test scenarios $\widetilde{\xi}_1,\dots,\widetilde{\xi}_{N_{\text{test}}}$. More precisely, we compare the parametrized losses $f(\bfx_{\text{WDRO}},\cdot)$ and $f(\bfx_{\text{ERM}},\cdot)$ on data drawn from a distribution that differs from the training distribution. The instance generators are chosen to make this shift controlled: for the \ST, we build synthetic graphs and cost matrices; for the \TA, we perturb a real base network. This lets us stress-test ERM while keeping the experimental setting interpretable.

Throughout the experiments, we use the squared Euclidean ground cost
$c(\xi,\zeta)\eqdef\Vert\xi-\zeta\Vert^2_2$
in the Wasserstein distance. The robust model also depends on the Wasserstein radius $\varrho>0$, the sampling variance $\sigma^2$, and the smoothing temperature $\varepsilon>0$. These parameters shape the robust objective itself, so we tune them empirically through validation experiments.

The remaining numerical issue is the upper bound $\lambda_{\max}$ for the dual variable. The bound must be large enough not to exclude the relevant minimizer, but excessively large values can create numerical instabilities in the exponential weights when $\lambda\Vert\widehat{\xi}_k-\zeta\Vert^2_2$ dominates $f(\bfx,\zeta)$. We therefore calibrate $\lambda_{\max}$ so that $\lambda_{\max}\overline{c}$ has the same order of magnitude as the typical dispersion $\Delta_f$ of the loss over sampled scenarios, where
$$
\Delta_f\eqdef\E_{\bfx}\left[\diam(f(\bfx,\Xi))\right]=\E_{\bfx}\left[\max_{\xi\in\Xi}f(\bfx,\xi)-\min_{\zeta\in\Xi}f(\bfx,\zeta)\right]\qquad\text{and}\qquad \overline{c}\eqdef\E_{\widehat{\xi}\sim\widehat{\Proba}_N}\left[\E_{\zeta}\left[\Vert\widehat{\xi}-\zeta\Vert^2_2\right]\right].
$$
Both quantities are only needed as calibration scales, so we estimate them by the following sampling heuristic.

\begin{algorithm}[H]
\caption{\emph{Calibration of $\lambda_{\max}$}}\label{alg: calibration of lambdamax}
\begin{algorithmic}
\Require training samples $\left\lbrace\widehat{\xi}_1,\dots,\widehat{\xi}_N\right\rbrace$ of the learning problem
\For{$k\in\llbracket N\rrbracket$}
\State Sample $\zeta^{(k)}_1,\dots,\zeta^{(k)}_S\sim\normal{\widehat{\xi}_k}{\sigma^2\mathbf{I}}$
\State $\bfx_k\gets \text{LMO}\left(\zeta_k\right)$ with $\zeta_k\sim\normal{\widehat{\xi}_k}{\sigma^2\mathbf{I}}$
\EndFor
\State $\widetilde{c}\gets\dfrac{1}{NS}\displaystyle\sum_{k=1}^N\sum_{s=1}^S \Vert\widehat{\xi}_k-\zeta^{(k)}_s\Vert^2_2$\Comment{Approximation of the average cost}
\State $\widetilde{\Delta_f}\gets\dfrac{1}{N}\displaystyle\sum^N_{k=1}\left(\max_{s\in\llbracket S\rrbracket}\left\lbrace f(\bfx_k,\zeta^{(k)}_s)\right\rbrace-\min_{s'\in\llbracket S\rrbracket}\left\lbrace f(\bfx_k,\zeta^{(k)}_s)\right\rbrace\right)$\Comment{Approximation of the average dispersion}
\State\Return $\lambda_{\max}\eqdef\Quotient{\widetilde{\Delta_f}}{2\widetilde{c}}$
\end{algorithmic}
\end{algorithm}

For all experiments, the primal variable $\bfx_0$ is initialized at random in $\Xset$, and the dual variable is initialized at the midpoint of the calibrated interval, $\lambda_0\eqdef\lambda_{\max}/2$. Both WDRO and ERM solvers are run with a maximum budget of $5000$ iterations.

\subsection{Uncertain \TA}\label{subsec: uncertain ta}

We construct our instances based on the ``SiouxFalls'' network, with 24 nodes and 76 links, from the \emph{TransportationNetwork} dataset \citep{xu2024unified}. For this problem, our linear minimization oracle is solving a shortest path problem on the network, with weights given by the current gradient \citep{mitradjieva2013stiff}. In our experiments, these shortest path subproblems are solved using \emph{Dijkstra's} algorithm, which runs in $\complexity{|A|+n\log(n)}$, where $n$ is the number of nodes of the network. We compute solutions to both models
\begin{equation*}
\bfx_{\text{WDRO}}\eqdef\underset{\bfx\in\Xset_{\mathrm{flow}}}{\argmin}\min_{\lambda\in [0,\lambda_{\max}]}F(\bfx,\lambda)
\qquad\text{and}\qquad \bfx_{\text{ERM}}\eqdef\underset{\bfx\in\Xset_{\mathrm{flow}}}{\argmin}\dfrac{1}{N_{\text{train}}}\sum^{N_{\text{train}}}_{k=1}\displaystyle\sum_{a\in A}\int^{\bfx_a}_0\left(t_k\right)^{(0)}_a\left(1+\alpha_k\left(\dfrac{s}{c_a}\right)^{\beta_k}\right)\mathrm{d}s.\label{eq: erm solution ta}
\end{equation*}
We build a shifted distribution $\widetilde{\mathbf{P}}_{\mathbf{Inst}}$, whose construction is detailed in \Cref{subseq: instances for the ta}, from which we sample a shifted set
$
\Xi_{\text{test}}\eqdef\{\widetilde{\xi}_1,\dots,\widetilde{\xi}_{N_{\text{test}}}\}
$
with $\widetilde{\xi}_1,\dots,\widetilde{\xi}_{N_{\text{test}}}\sim\widetilde{\mathbf{P}}_{\mathbf{Inst}}$. We evaluate the losses on the training dataset and the test dataset
$$
\mathscr{L}_{\Xi}(\bfx)\eqdef
\left\lbrace
f(\bfx,\xi)=\bfx^\top\xi \bfx
\,\middle|\,
\xi\in\Xi\right\rbrace,
\qquad
\bfx\in\{\bfx_{\text{ERM}},\bfx_{\text{WDRO}}\},
\quad
\Xi\in\{\Xi_{\text{train}},\Xi_{\text{test}}\}.
$$
For this experiment, the chosen WDRO parameters are: $\varepsilon=10^{-3}$ for the smoothing temperature, 
the Wasserstein radius has been set to $\varrho=17$, and we estimated each integral with $S=100$ samples and a mini-batch size of $b=10$.

\begin{figure}[H]
\centering
\begin{subcaptiongroup}
	\begin{minipage}{0.45\linewidth}
       \centering
       \includegraphics[width=1\linewidth,keepaspectratio]{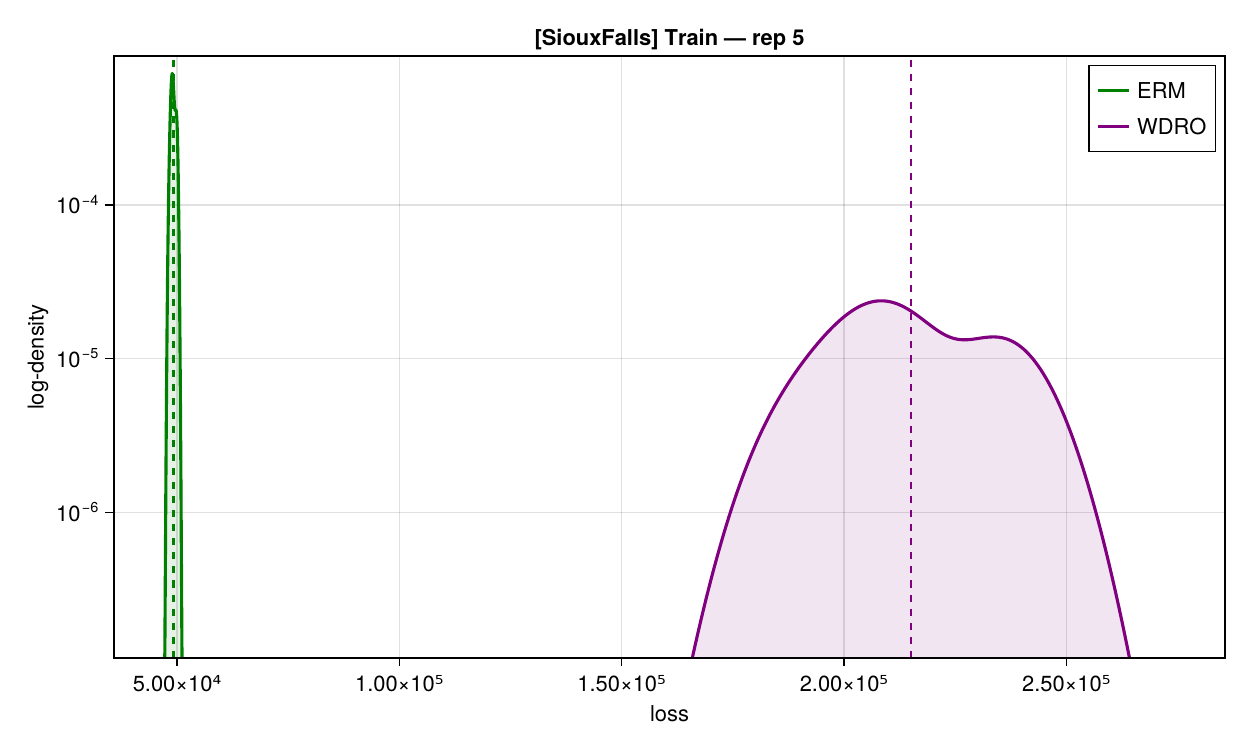}
       \caption{training data}
       \label{Fig: TA training test example}
   	\end{minipage}\hfill
    \begin{minipage}{0.45\linewidth}
       \centering
       \includegraphics[width=1\linewidth,keepaspectratio]{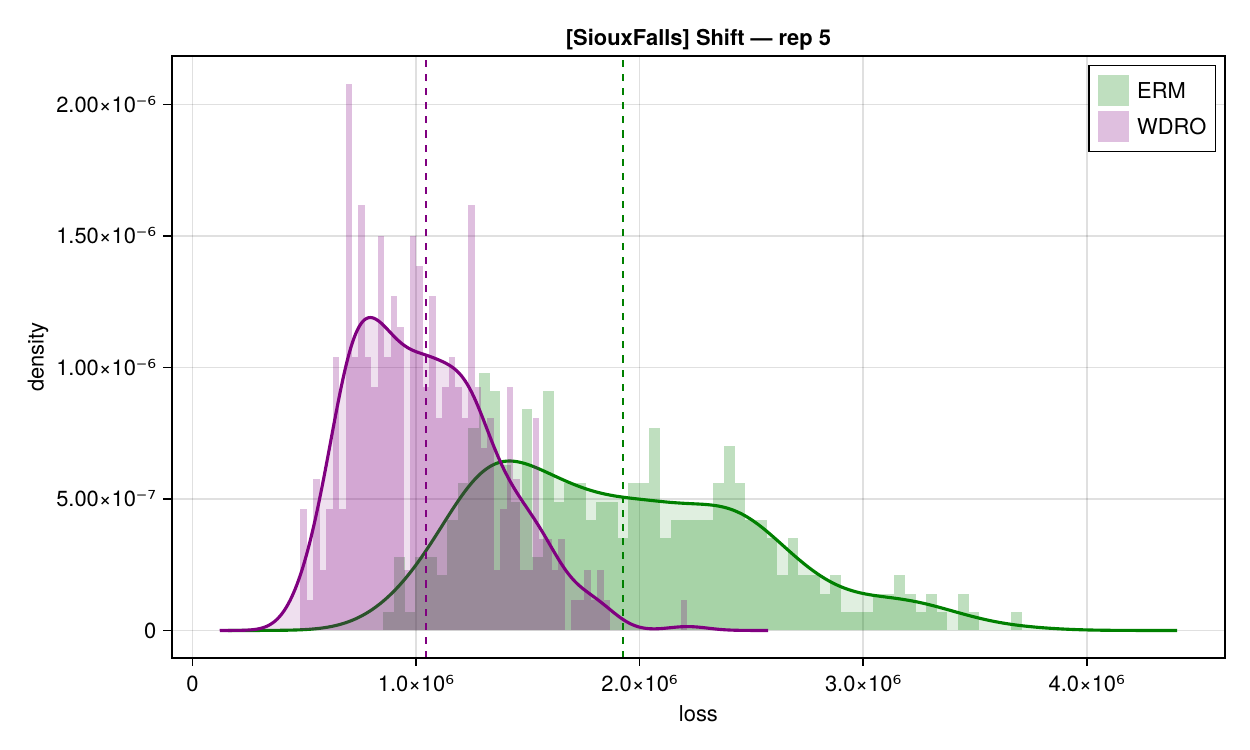}
       \caption{shift test}
       \label{Fig: TA shift test example}
   \end{minipage}
   \end{subcaptiongroup}
   \caption{example of \TA on ``SiouxFalls''. Purple histograms represent the losses obtained with the robust solution; green histograms represent the losses obtained with the ERM. For better readability, in \Cref{Fig: TA training test example} we represented the log-density of the Kernel density of the losses on the training data.}
\end{figure}

\Cref{Fig: TA training test example,Fig: TA shift test example} illustrate that on the training data, the ERM approach outperforms the robust method, achieving an overly optimistic low travel time. Conversely, the WDRO objective function leads to much higher total travel times on the training set, of the order of $2.18\times 10^5$. On the shifted test data, the ERM severely degrades and underperforms, with an average loss of $1.92\times 10^6$, while the robust solution remains stable and achieves a lower out-of-sample travel time, with an average objective of $1.04\times 10^6$.

\begin{figure}[H]
\centering
\begin{subcaptiongroup}
	\hfill
	\begin{minipage}{0.3\linewidth}
       \centering
       \includegraphics[width=1\linewidth,keepaspectratio]{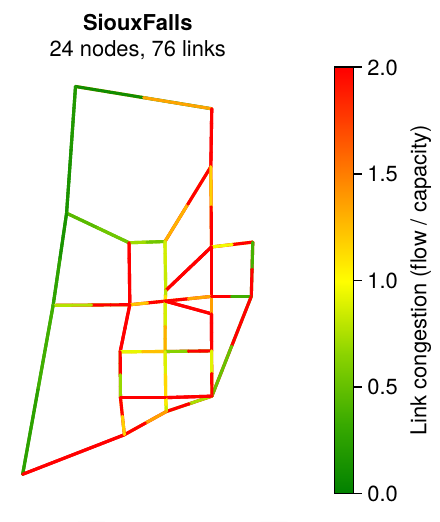}
       \caption{shift ERM flow}
       \label{Fig: TA erm test flow}
   	\end{minipage}\hfill
   \begin{minipage}{0.3\linewidth}
       \centering
       \includegraphics[width=1\linewidth,keepaspectratio]{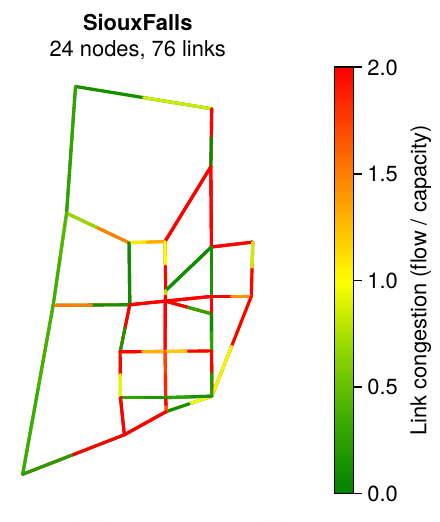}
       \caption{shift robust flow}
       \label{Fig: TA wdro test flow}
   \end{minipage}
   \hfill
   \end{subcaptiongroup}
   \caption{Illustrate the behavior of the ERM solution (left) and the robust solution (right) on ``SiouxFalls''. On each arc, a red shade indicates a high congestion-to-capacity ratio, while a green shade indicates that the considered link is uncongested.}
\end{figure}

In \Cref{Fig: TA erm test flow,Fig: TA wdro test flow},
we visualize the routing profiles yielded by the ERM and the robust solution on a common instance with a shifted distribution. We see that the robust solution alleviates congestion on many arcs in the city center, providing a solution with a reduced objective value.

\subsection{Uncertain \ST}\label{subsec: uncertain st}

For the \ST, we evaluate the robustness and out-of-sample performance of the fractional solutions produced by the convex relaxation. The instances are generated with varying degrees of freedom, through changes in the number of nonzero entries, the variance of the entries, and the support pattern; the full generation procedure is described in \Cref{eq: generation of instances for the ST} and \Cref{alg: generation of ST instances}. The linear minimization oracle is solved by Kruskal's algorithm \citep{schrijver2002combinatorial}, with complexity $\complexity{m\log(m)}$.

To test robustness, we build a shifted distribution $\widetilde{\mathbf{P}}_G$ using the same construction as the training distribution, but with a modified ground cost $\widetilde{\mu}_G$, a modified mask $\widetilde{M}$, and a larger variance on the entries. This shift is designed to represent a more unstable regime, where the test instances have more degrees of freedom than the training instances. We then compute the relaxed WDRO and ERM solutions
$$
\bfx_{\text{WDRO}}\in\underset{\bfx\in\Xset_{\mathrm{tree}}}{\argmin}\min_{\lambda\in [0,\lambda_{\max}]}F(\bfx,\lambda),
\qquad
\bfx_{\text{ERM}}\in\underset{\bfx\in\Xset_{\mathrm{tree}}}{\argmin}
\dfrac{1}{N_{\text{train}}}\sum^{N_{\text{train}}}_{k=1}\bfx^\top\widehat{\xi}_k\bfx.
$$

In practice, we set $\varepsilon=10^{-4}$, 
$\varrho=10$, and use a sampling budget $S=10$ with mini-batches of size $b=10$. The shifted test set is sampled as
$
\Xi_{\text{test}}\eqdef\{\widetilde{\xi}_1,\dots,\widetilde{\xi}_{N_{\text{test}}}\}
$
with $\widetilde{\xi}_1,\dots,\widetilde{\xi}_{N_{\text{test}}}\sim\widetilde{\mathbf{P}}_G$. We compare ERM and WDRO on both the training and test sets through the loss collections 
$$
\mathscr{L}_{\Xi}(\bfx)\eqdef
\left\lbrace
f(\bfx,\xi)=\bfx^\top\xi \bfx
\,\middle|\,
\xi\in\Xi\right\rbrace,
\qquad
\bfx\in\{\bfx_{\text{ERM}},\bfx_{\text{WDRO}}\},
\quad
\Xi\in\{\Xi_{\text{train}},\Xi_{\text{test}}\}.
$$
The histograms in \Cref{Fig: revised training test example,Fig: revised shift test example} show the resulting empirical loss distributions, with kernel-density estimates added for readability.

\begin{figure}[H]
\centering
\begin{subcaptiongroup}
	\begin{minipage}{0.5\linewidth}
       \centering
       \includegraphics[width=1\linewidth,keepaspectratio]{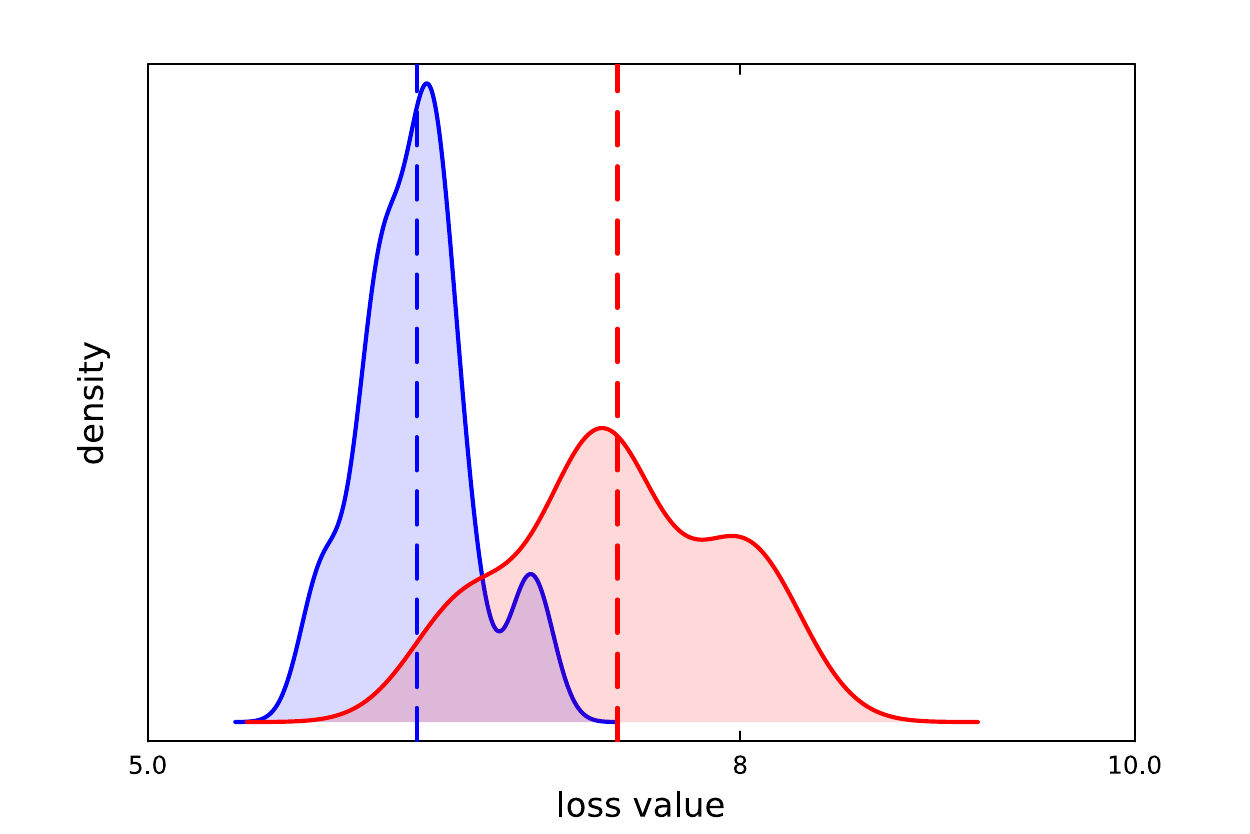}
       \caption{training data}
       \label{Fig: revised training test example}
   	\end{minipage}\hfill
       \begin{minipage}{0.5\linewidth}
       \centering
       \includegraphics[width=1\linewidth,keepaspectratio]{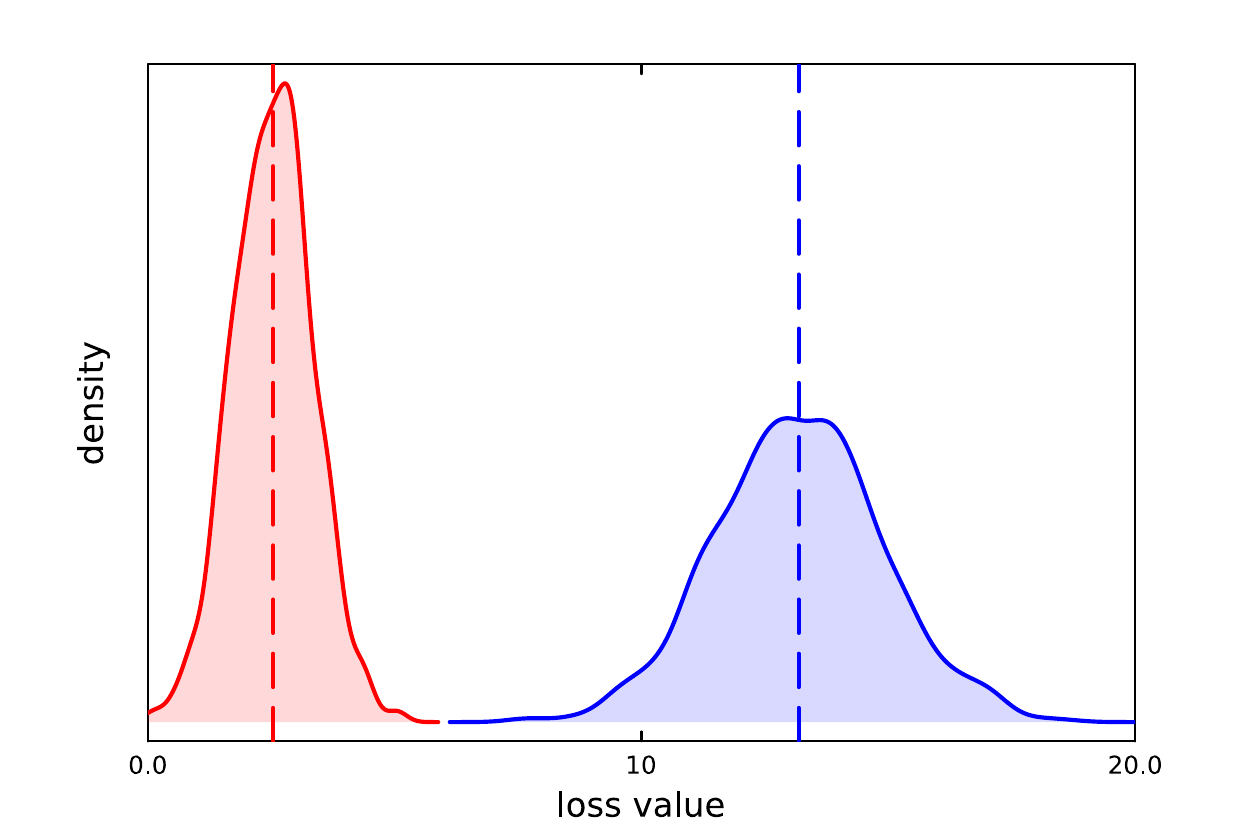}
       \caption{shift test}
       \label{Fig: revised shift test example}
   \end{minipage}
   \end{subcaptiongroup}
   \caption{\ST example on a graph $G$ on $n=50$ vertices and $m=331$ edges. Red smoothed histograms represent the losses obtained with the robust solution; blue histograms represent the losses obtained with the ERM. The ERM solution is overconfident and underperforms on novel data.}
\end{figure}

On the training data, \Cref{Fig: revised training test example} shows the expected advantage of ERM: it is optimized directly on the empirical distribution and therefore obtains the lowest average training losses, while the WDRO solution is deliberately more conservative. Under the shifted distribution, however, the ranking reverses\footnote{This behavior should be interpreted in light of \Cref{sec: Properties of ERM}. In highly constrained combinatorial problems, the feasible set may contain only a limited number of solutions, so ERM can already generalize well even under moderate distributional shifts. In particular, \Cref{thm:ermgeneralize,thm:erm_shift} establish the good performance of ERM in finite settings. The robust objective is more useful in looser settings, where computing an integer solution is in practice intractable, or where the feasible set is richer, and ERM has more opportunity to overfit the empirical sample. Consistently with \Cref{thm:ermgeneralize}, we study how the relative performance of WDRO changes as the problem becomes less constrained, using the graph size $n$ and the density $d(m,n)\eqdef 2m/(n(n-1))$ as complexity parameters.}. As shown in \Cref{Fig: revised shift test example}, the WDRO solution achieves smaller losses than ERM, illustrating the intended tradeoff: WDRO sacrifices empirical performance to obtain a solution that is less sensitive to distributional changes.

For a second experiment, we focus on a density test with fixed graph size $n=50$. For densities spread from sparse to complete graphs, we consider edge counts $m_k$ such that $d(m_k,n)\approx k/10$, $k\in\llbracket 10\rrbracket$. For each density, we generate $n_{\text{inst}}=10$ independent instances on the same graph size but with different training dynamics, namely different base costs, masks, and generation parameters. We aggregate the shifted-test advantage of WDRO over ERM through
$$
\mathscr{D}^{(m)}_{\text{text}}\eqdef
\bigcup_{i=1}^{n_{\text{inst}}}
\left\lbrace
f({\bfx_{\text{ERM}}^{(m,i)}},\xi)
-
f({\bfx_{\text{WDRO}}^{(m,i)}},\xi)
\,\middle|\,
\xi\in\Xi_{\text{test}}
\right\rbrace,
$$
where ${\bfx_{\text{ERM}}^{(m,i)}}$ and ${\bfx_{\text{WDRO}}^{(m,i)}}$ denote the ERM and WDRO solutions for the $i^{\text{th}}$ instance at edge count $m$.
\begin{wrapfigure}[18]{r}{0.5\textwidth}
  \centering
  \includegraphics[width=\linewidth,keepaspectratio]{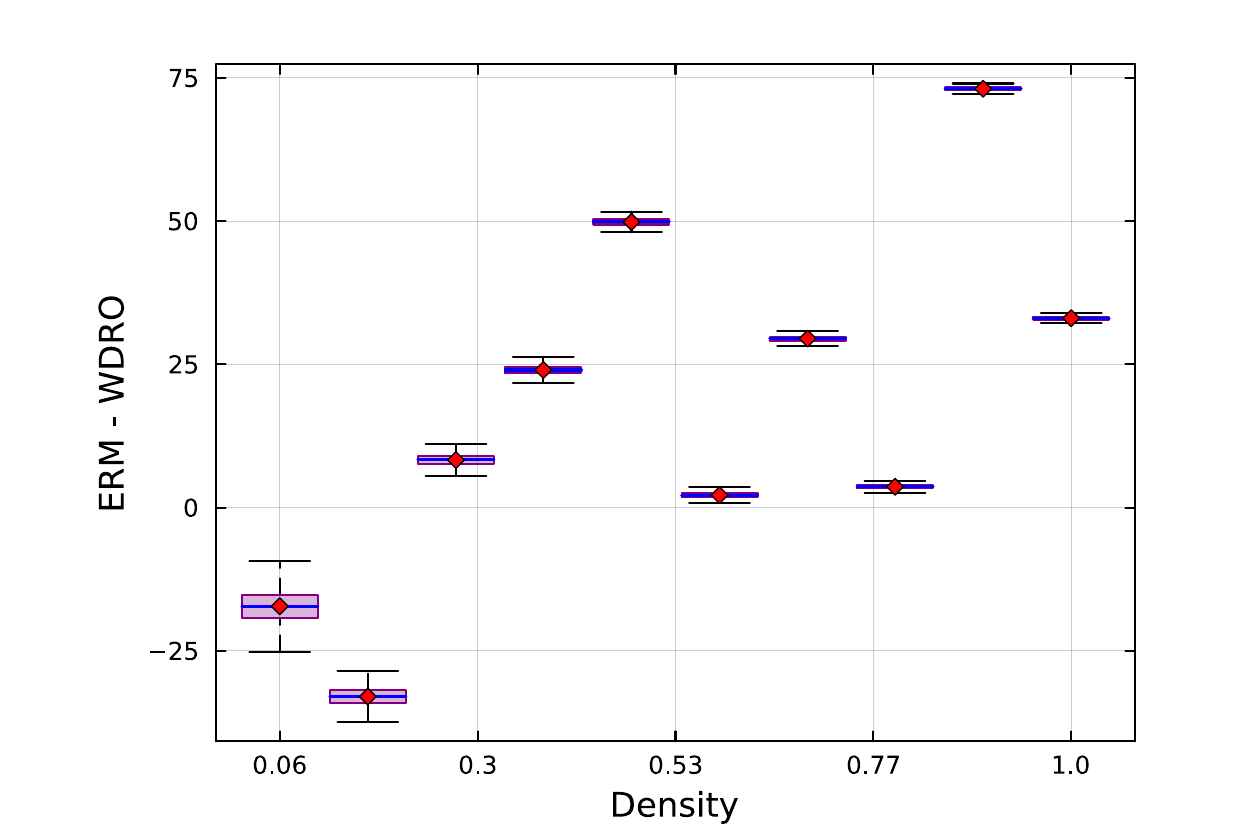}
  \caption{aggregated losses for instances of the \ST on a graph $G$ on $n=50$ vertices and $m\in\lbrace 75,203,\dots,1225\rbrace$ edges.}\label{Fig: revised shift test summary}
\end{wrapfigure}
\Cref{Fig: revised shift test summary} reports these aggregated shifted losses. The distributions remain instance-dependent, and their averages do not follow a perfectly monotone trend, reflecting the sensitivity of robustness gains to the particular shifted distribution $\widetilde{\mathbf{P}}_G$. Nevertheless, the averages of $\mathscr{D}^{(m)}_{\text{test}}$ are positive across densities, meaning that WDRO improves shifted-test performance on average. The dispersion also narrows as density increases: in denser, less constrained graphs, the advantage of the robust solution becomes more systematic. Overall, the experiment suggests that the benefit of WDRO appears when the shifted distribution is sufficiently different from the empirical distribution, especially when the training set has limited variability or when the scenario variance is large enough to expose different behaviors in the gradient estimates. This is precisely the regime where distributional shifts can change the effective decision problem, and where the WDRO formulation is expected to be most relevant.

\section{Conclusion}

This work proposes a general algorithmic framework bringing Wasserstein distributional robustness to constrained stochastic optimization. The main ingredient is to replace the nonsmooth inner worst-case term by an entropic approximation, which yields a smooth objective with computable stochastic gradients.
Coupled with a momentum stochastic Frank-Wolfe scheme, this makes the approach compatible with feasible sets described through a linear minimization oracle, and therefore with many continuous constraint sets, as well as convex relaxations of mixed-integer and combinatorial problems.
We prove the consistency and unbiasedness of the gradient estimators and derive convergence guarantees for the resulting method.
The numerical experiments on traffic assignment and quadratic spanning trees show the expected tradeoff: ERM remains competitive when the constraints restrict the solutions, whereas WDRO becomes valuable when the test distribution departs from the training data and the feasible set is rich enough for overfitting.
This supports the use of smooth WDRO as a practical robustness tool for structured decision problems, and opens many questions, notably on geometric properties of the feasible set which determine the empirical gains of robustness.

\section{Acknowledgments}


This work was supported by the LabEx \textsc{Persyval}-Lab (ANR-11-LABX-0025) and by MIAI @ Grenoble Alpes (ANR-19-P3IA-0003), whose support is gratefully acknowledged.
We thank Changhyun Kwon and Guillaume Dalle for the development of \texttt{TrafficAssignment.jl}. We also thank Mathis Azéma for relevant discussions on 
the unconstrained setting, and extend our sincere thanks to Waïss Azizian for his feedback and comments on the paper.

\bibliography{refs}

\appendix
\newpage
\crefalias{section}{appendix}
\crefalias{subsection}{appendix}

\section{Generation of instances}

This appendix details the instance-generation procedures used in the numerical section. Its purpose is twofold: first, to make explicit which part of each problem instance is kept fixed and which part is random; second, to clarify how the training and shifted distributions differ in the experiments. The two procedures below follow the same pattern: a nominal structure is fixed, and random scenarios are generated around it. In \Cref{sec: Numerical Illustrations},  the robust and ERM solutions are then compared on samples drawn either from the training distribution or from a shifted distribution.

\subsection{Instances for the Traffic Assignment}\label{subseq: instances for the ta}

The instances used for the \TA experiments come from the \emph{TransportationNetwork} dataset (\href{https://github.com/bstabler/TransportationNetworks}{\textcolor{citationcolor}{\texttt{https://github.com/bstabler/TransportationNetworks}}}). Each nominal instance provides a network $G=(V,A)$ with capacities $c_a>0$, free-flow times $t^{(0)}_a>0$ for all $a\in A$, along with a congestion multiplier $\alpha$ and a power parameter $\beta>0$ in the BPR-type travel-time function.

The network topology and capacities are kept fixed throughout the experiment. Uncertainty is introduced only through the travel-time parameters: free-flow times are multiplied arc-wise, while $\alpha$ and $\beta$ are sampled around shifted values. Positive values of $\mu_{\alpha}$ and $\mu_{\beta}$ make the training scenarios optimistic relative to the nominal parameters, which creates a controlled mismatch with the more pessimistic shifted scenarios used for out-of-sample evaluation. The uncertain training distribution is generated as follows:
\begin{algorithm}[H]
\caption{\emph{Generation of Uncertain \TA training instances}}\label{alg: generation of TA train instances}
\begin{algorithmic}
\Require $\mathbf{Inst}=\left(G=(V,A),\left(c_a\right)_{a\in A},(t^{(0)}_a)_{a\in A},\alpha,\beta\right)$ the nominal instance
\State Uncertainty parameters:
\State\hspace*{1\parindent}$\sigma_{\alpha}^2>0$\Comment{controls the variance of the uncertainty on the multiplier}
\State\hspace*{1\parindent}$B_{\beta}$>0\Comment{length of the sampling interval for the power parameter}
\State\hspace*{1\parindent}$\mu_{\alpha},\mu_{\beta}$: shifting the training parameters\Comment{$\mu_{\alpha},\mu_{\beta}>0$ shifts towards an optimistic setting}
\State Build distribution $\mathbf{P}_{\mathbf{Inst}}$ defined as
\State\hspace*{1\parindent}\textbf{function }$\mathbf{P}_{\mathbf{Inst}}$:
\State\hspace*{2\parindent}\textbf{for} $a\in A$ \textbf{do}
\State\hspace*{3\parindent}Sample $m_a\sim\mathcal{U}\left([0.75,1]\right)$\Comment{change the free flow time of arc $a$}
\State\hspace*{2\parindent}\textbf{end for}
\State\hspace*{2\parindent} Sample $\widetilde{\alpha}>0$ with $\widetilde{\alpha}\sim\normal{\alpha-\mu_{\alpha}}{\sigma_{\alpha}^2}$\Comment{normal perturbation}
\State\hspace*{2\parindent} Sample $\widetilde{\beta}>0$ with $\widetilde{\beta}\sim\mathcal{U}\left(\left[\left(\beta-\mu_{\beta}\right)\pm\Quotient{B_{\beta}}{2}\right]\right)$\Comment{uniform distribution for the power}
\State\hspace*{2\parindent}\Return  $\mathbf{Inst}_{\text{sample}}=\left(G=(V,A),\left(c_a\right)_{a\in A},(m_{\alpha}\times t^{(0)}_a)_{a\in A},\widetilde{\alpha},\widetilde{\beta}\right)$
\State\hspace*{1\parindent}\textbf{end}
\State\Return $\mathbf{P}_{\mathbf{Inst}}$
\end{algorithmic}
\end{algorithm}

This procedure yields uncertain \TA scenarios in which the objective parameters vary while the underlying road network remains unchanged. We optimize the robust objective and the ERM objective using training scenarios
$\widehat{\xi}_1,\dots,\widehat{\xi}_{N_{\text{train}}}\sim\mathbf{P}_{\mathbf{Inst}}.$
In contrast, the shifted scenarios are generated from a shifted distribution using the same method but with significantly more pessimistic parameters. Capacities are also slightly reduced. Typically, the results illustrated in \Cref{Fig: TA training test example,Fig: TA shift test example} and the illustration of \Cref{Fig: TA erm test flow,Fig: TA wdro test flow} are obtained with generating parameters $\sigma_{\alpha}^2=0.3\times\alpha$, $B_{\beta}=0.9$, $\mu_{\alpha}=0.15\times\alpha$, $\mu_{\beta}=1$.

\subsection{Instances for the Quadratic Minimum Spanning Tree}\label{eq: generation of instances for the ST}

For the \emph{Uncertain \ST}, the graph determines the combinatorial structure, while the random cost matrix determines the quadratic interaction between selected edges.
We first generate a connected graph and then sample a base cost matrix on its fixed edge set.
A binary mask is used to introduce sparsity and missing interactions, so that each scenario contains both noisy costs and incomplete information. The normalization step keeps the scale of the generated matrices comparable across instances. The full generation protocol is given in \Cref{alg: generation of ST instances}.
Each instance is described by a graph $G$ with $n$ vertices and $m$ edges, together with a ``true'' distribution $\mathbf{P}_G$ from which the training data are sampled:
$\widehat{\xi}_1,\dots,\widehat{\xi}_{N_{\text{train}}}\sim\mathbf{P}_G.$

\begin{algorithm}[H]
\caption{\emph{Generation of Uncertain \ST instances}}\label{alg: generation of ST instances}
\begin{algorithmic}
\Require $n$ the number of nodes, $m$ the number of edges
\State $G\gets$ random graph on $n$ vertices and $m$ edges, generated by Erdős–Rényi's algorithm
\While{$G$ is not connected}
\State $G\gets\text{Erdős–Rényi}(n,m)$
\EndWhile
\State Sample $\mu_{G}\in\R^{m\times m}_+$ random\Comment{Base cost}
\State Sample $M\in\lbrace 0,1\rbrace^{m\times m}$, with $M_{ij}\sim\mathcal{B}(0,7),\forall i,j\in\llbracket m\rrbracket$\Comment{30\% chance of missing data}
\State Build distribution $\mathbf{P}_{G}$ defined as
\State\hspace*{1\parindent}\textbf{function }$\mathbf{P}_{G}$:
\State\hspace*{2\parindent} Sample $\mathbf{C}$ with $\mathbf{C}_{ij}\sim\normal{0}{1}$\Comment{perturbation}
\State\hspace*{2\parindent} $N\gets M\otimes\left(\mu_G+0,1\times\mathbf{C}\right)$\Comment{Base cost + noise + missing data}
\State\hspace*{2\parindent}\Return $N/\Vert N\Vert_2$\Comment{normalized to facilitate comparison across instances}
\State\hspace*{1\parindent}\textbf{end}
\State\Return $\left(G,\mathbf{P}_G\right)$
\end{algorithmic}
\end{algorithm}

\section{Comparison between exact and smoothed WDRO}\label{app: exact vs smoothed wdro}

This appendix compares the smoothed WDRO formulation~\Cref{eq: regularized WDRO with duality} and the initial dual WDRO formulation~\Cref{eq: WDRO with duality} in a case where the inner supremum over $\zeta$ can be evaluated explicitly. The aim of this appendix is to show that both the exact and smoothed WDRO approaches may provide improved robustness under distributional shift compared with ERM, but that neither approach uniformly dominates the other.

We focus on the \ST objective with unconstrained cost matrices, namely $\Xi=\R^{m\times m}$, and use the squared Euclidean ground cost $c(\xi,\zeta)=\Vert\xi-\zeta\Vert_2^2$. This setting is simpler than the one considered in \Cref{subsec: uncertain st}, but it provides a useful reference point: the inner supremum in the dual WDRO problem admits a closed-form solution.

For a fixed spanning-tree decision $\bfx$ and a dual parameter $\lambda>0$, the sample-wise adversarial term is
$$
\sup_{\zeta\in\R^{m\times m}}
\left\{
\bfx^\top\zeta\bfx-\lambda\Vert\widehat{\xi}-\zeta\Vert_2^2
\right\}.$$
The function inside the supremum is strictly concave in $\zeta$. Its gradient vanishes at
$
\zeta^\star
=
\widehat{\xi}
+
\frac{1}{2\lambda}\bfx\bfx^\top,
$
which is therefore the unique maximizer. Substituting this optimal value into the dual formulation gives
$
\mathrm{WDRO}_{\mathrm{ST}}
\leq
\min_{\bfx\in\Xset_{\mathrm{tree}}}
\inf_{\lambda>0}
\lambda\varrho
+
\E_{\widehat{\xi}\sim\widehat{\mathbb{P}}_N}
\left[
\bfx^\top\widehat{\xi}\bfx
+
\frac{1}{4\lambda}\Vert\bfx\bfx^\top\Vert^2
\right]
=
\min_{\bfx\in\Xset_{\mathrm{tree}}}
\E_{\widehat{\xi}\sim\widehat{\mathbb{P}}_N}
\left[
\bfx^\top\widehat{\xi}\bfx
+
\inf_{\lambda>0}
\left\{
\lambda\varrho+\frac{\Vert\bfx\Vert^4}{4\lambda}
\right\}
\right].$
The remaining one-dimensional minimization is explicit: the minimizer is
$\lambda^\star=\Vert\bfx\Vert^2/(2\sqrt{\varrho})$
and yields
$$
\mathrm{WDRO}_{\mathrm{ST}}
\leq
\min_{\bfx\in\Xset_{\mathrm{tree}}}
\E_{\widehat{\xi}\sim\widehat{\mathbb{P}}_N}
\left[
\bfx^\top\widehat{\xi}\bfx+\sqrt{\varrho}\Vert\bfx\Vert^2
\right]
=
\min_{\bfx\in\Xset_{\mathrm{tree}}}
\E_{\widehat{\xi}\sim\widehat{\mathbb{P}}_N}
\left[
\bfx^\top\left(\widehat{\xi}+\sqrt{\varrho}\,\mathbf{I}\right)\bfx
\right].$$

Note that this closed-form expression relies on the unconstrained scenario space $\Xi=\R^{m\times m}$ and does not directly apply to the constrained scenario sets considered in the main experiments (see \Cref{eq: generation of instances for the ST}). In contrast, our smoothed approach can accommodate such constraints through sampling feasible scenarios, \eg, using rejection sampling (see \Cref{rem: resample during computation of the gradient}).

Thus, in this unconstrained setting, the exact robust counterpart can be solved as an empirical problem with shifted training matrices $\widehat{\xi}_k+\sqrt{\varrho}\,\mathbf{I}$. We denote the corresponding solution by $\bfx_{\mathrm{exact}}$ and compare it with the smoothed WDRO solution and the ERM solution.

\begin{figure}[H]
\centering
\begin{subcaptiongroup}
	\begin{minipage}{0.33\linewidth}
       \centering
       \includegraphics[width=1\linewidth,keepaspectratio]{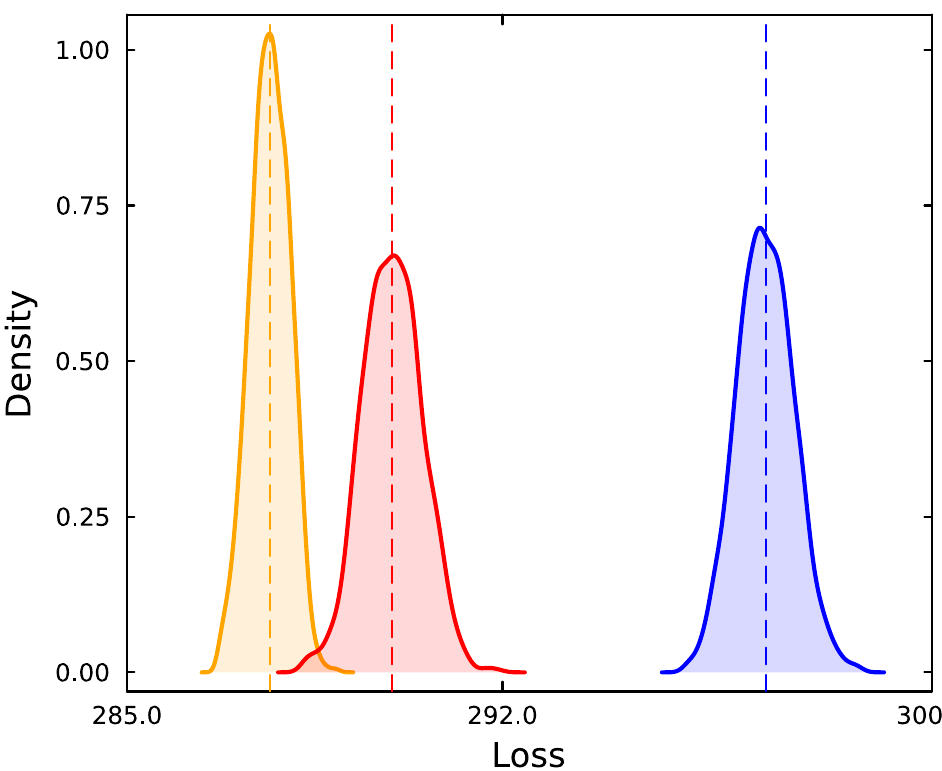}
       \caption{}
       \label{Fig: revised ST Exact vs skwdro1}
   	\end{minipage}\hfill
    \begin{minipage}{0.33\linewidth}
       \centering
       \includegraphics[width=1\linewidth,keepaspectratio]{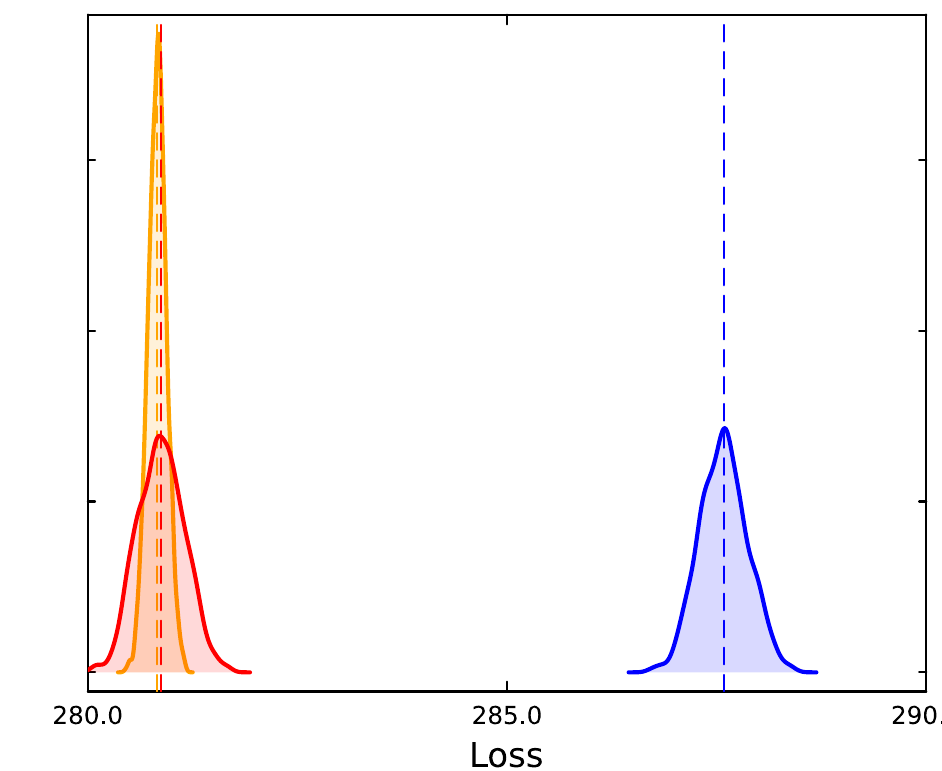}
       \caption{}
       \label{Fig: revised ST Exact vs skwdro2}
   \end{minipage}\hfill
    \begin{minipage}{0.33\linewidth}
       \centering
       \includegraphics[width=1\linewidth,keepaspectratio]{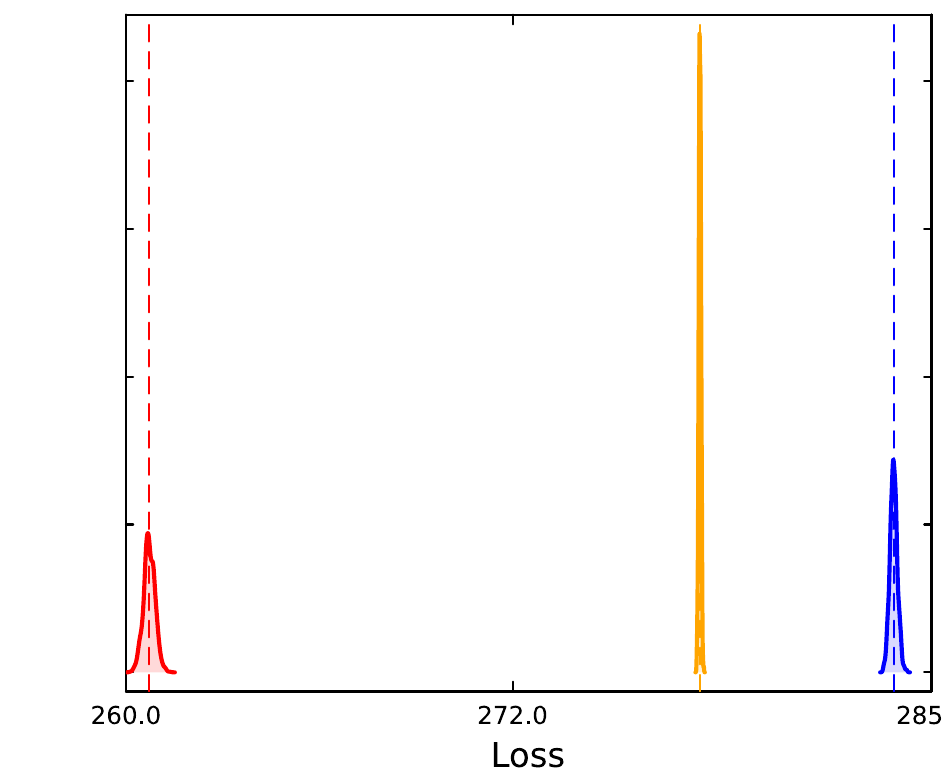}
       \caption{}
       \label{Fig: revised ST Exact vs skwdro3}
   \end{minipage}
   \end{subcaptiongroup}
   \caption{Losses for the \ST with unconstrained cost matrices on unseen data, for 3 different instances (a),(b),(c). Yellow histograms correspond to the exact robust solution, red histograms to the smoothed robust solution, and blue histograms to ERM. We see that both exact/smoothed robust approaches improve over ERM.}
   \label{Fig: revised ST Exact vs skwdro global}
\end{figure}

\hyperref[Fig: revised ST Exact vs skwdro global]{Figure \ref{Fig: revised ST Exact vs skwdro global}} shows that both robust variants improve over ERM and that the comparison between the two is instance-dependent. In some cases, the exact reformulation gives the best shifted performance \hyperref[Fig: revised ST Exact vs skwdro1]{(a)}; in others, the smoothed WDRO solution is comparable \hyperref[Fig: revised ST Exact vs skwdro2]{(b)} or better \hyperref[Fig: revised ST Exact vs skwdro3]{(c)}. This is consistent with the role of smoothing in the main framework: it is not intended to reproduce the exact worst-case solution in every special case, but to provide a tractable and broadly applicable surrogate when the exact inner supremum is unavailable or too costly to exploit.

\section{Properties of the empirical risk minimization in the discrete case}\label{sec: Properties of ERM}

In this section, we quantify the quality of a solution to \Cref{eq: ERM} for the underlying expectation minimization problem when the number of solutions is finite (i.e., $\Xset$ is a finite subset of $\Z^n$.
This highlights a complementary aspect of the behavior of ERM: it tends to perform well in generalization and robustness to moderate shifts in highly constrained settings, similarly to what we observe empirically in the main text for continuous problems.

\Cref{thm:ermgeneralize} formalizes the generalization guarantee for finite solution sets.
The analysis is similar to the ERM guarantees provided in \citet{kleywegt2002sample}, with a few differences.
We use the boundedness of $f$ as a simplifying assumption, which, although stronger than their setting using only finiteness of the moment-generating function of $f(x,\cdot)$, is realistic for many operations research problems.
This allows us to derive a non-asymptotic result that remains valid for any $N$.
We then extend this to the performance of ERM under a moderate distributional shift in \Cref{thm:erm_shift}.

\begin{Theorem}[ERM generalizes in finite sets]\label{thm:ermgeneralize}
Let $\Xset$ be a finite set of points. Let $\mathbf{P}$ be an unknown probability distribution over $\Xi$ and let $\widehat{\xi}_1, \dots, \widehat{\xi}_N$ be drawn i.i.d.~from $\mathbf{P}$.
Define the true risk and empirical risk as
\begin{align*}
\mathbf{F}(\bfx) \eqdef \mathbb{E}_{\xi \sim \mathbf{P}}[f(\bfx, \xi)], \qquad\text{and}\qquad \widehat{\mathbf{F}}_N(\bfx)\eqdef \dfrac{1}{N} \sum_{k=1}^N f(\bfx, \widehat{\xi}_k).
\end{align*}
Define the ERM solution $\bfx_{\mathrm{ERM}}\in\argmin_{\bfx\in\Xset}\widehat{\mathbf{F}}_N(\bfx)$ and the true optimal value $\mathbf{F}^\star\eqdef\min_{\bfx\in\Xset}\mathbf{F}(\bfx)$. Then, for a threshold $\delta \in (0,1)$, with probability at least $1 - \alpha$:
\begin{align}
\mathbf{F}(\bfx_{\mathrm{ERM}}) - \mathbf{F}^\star\leq 2\Vert f\Vert_\infty \sqrt{\frac{2\log\left(\Quotient{2|\Xset|}{\alpha}\right)}{N}} + \delta.\label{eq:generalizationbound}
\end{align}
\end{Theorem}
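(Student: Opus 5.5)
The argument is a uniform deviation bound over the finite set $\Xset$, obtained from Hoeffding's inequality and a union bound, followed by the standard ERM comparison. The additive slack $\delta$ is harmless: since $\delta\geq 0$, proving the bound without $\delta$ is enough. The parameter $\delta$ seems to be an artifact of an approximate minimizer, and if $\bfx_{\mathrm{ERM}}$ is only a $\delta$-approximate empirical minimizer, the same argument absorbs it in the last step.

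\emph{Step 1: concentration for a fixed point.} Fix $\bfx\in\Xset$. The variables $f(\bfx,\widehat{\xi}_k)$, $k\in\llbracket N\rrbracket$, are i.i.d., lie in $[-\Vert f\Vert_\infty,\Vert f\Vert_\infty]$ (an interval of length $2\Vert f\Vert_\infty$), and have mean $\mathbf{F}(\bfx)$. Hoeffding's inequality gives, for every $\tau>0$,
$$
\Proba\left(\left|\widehat{\mathbf{F}}_N(\bfx)-\mathbf{F}(\bfx)\right|>\tau\right)\leq 2\exp\left(-\frac{2N\tau^2}{(2\Vert f\Vert_\infty)^2}\right)=2\exp\left(-\frac{N\tau^2}{2\Vert f\Vert_\infty^2}\right).
$$

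\emph{Step 2: union bound over $\Xset$.} Summing this over the $|\Xset|$ points gives a failure probability of at most $2|\Xset|\exp(-N\tau^2/(2\Vert f\Vert_\infty^2))$. Setting this equal to $\alpha$ gives
$$
\tau=\Vert f\Vert_\infty\sqrt{\frac{2\log(2|\Xset|/\alpha)}{N}}.
$$
So with probability at least $1-\alpha$, we have $\sup_{\bfx\in\Xset}|\widehat{\mathbf{F}}_N(\bfx)-\mathbf{F}(\bfx)|\leq\tau$.

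\emph{Step 3: ERM comparison.} Let $\bfx^\star\in\argmin_{\bfx\in\Xset}\mathbf{F}(\bfx)$; it exists because $\Xset$ is finite. On the good event, decompose
$$
\mathbf{F}(\bfx_{\mathrm{ERM}})-\mathbf{F}^\star=\left[\mathbf{F}(\bfx_{\mathrm{ERM}})-\widehat{\mathbf{F}}_N(\bfx_{\mathrm{ERM}})\right]+\left[\widehat{\mathbf{F}}_N(\bfx_{\mathrm{ERM}})-\widehat{\mathbf{F}}_N(\bfx^\star)\right]+\left[\widehat{\mathbf{F}}_N(\bfx^\star)-\mathbf{F}(\bfx^\star)\right].
$$
The first and third brackets are each at most $\tau$. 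The middle bracket is $\leq 0$ by optimality of $\bfx_{\mathrm{ERM}}$, or $\leq\delta$ for a $\delta$-approximate minimizer. The total is therefore at most $2\tau+\delta$, which is exactly \eqref{eq:generalizationbound}.

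There is no serious obstacle in this argument. The only care needed is to use the range $2\Vert f\Vert_\infty$ in Hoeffding's inequality so that the constants match the statement. The uniform bound is essential because $\bfx_{\mathrm{ERM}}$ depends on the sample, and finiteness of $\Xset$ is what makes the union bound possible.
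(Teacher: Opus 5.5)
Your proposal is correct and follows the paper's proof essentially step for step. Both use Hoeffding's inequality on the range $[-\Vert f\Vert_\infty,\Vert f\Vert_\infty]$, then a union bound over the finite set $\Xset$ to get the uniform deviation event at level $\tau=\Vert f\Vert_\infty\sqrt{2\log(2|\Xset|/\alpha)/N}$, and finish with the three-term ERM comparison, in which $\delta$ enters as the suboptimality of a $\delta$-approximate empirical minimizer.
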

\begin{proof}
Fix any $\bfx\in \Xset$, the random variables $\left\lbrace f(\bfx,\widehat{\xi}_i)\right\rbrace_{i \in\llbracket N\rrbracket}$ are i.i.d.~with common expectation $\mathbf{F}(\bfx)$ and take values in the interval $[-\Vert f\Vert_\infty,\Vert f\Vert_\infty]$. For any $t > 0$, we can apply Hoeffding's inequality \citep{hoeffding1963probability}:
\begin{align*}
\Proba\left( |\widehat{\mathbf{F}}_N(\bfx) - \mathbf{F}(\bfx)| \geq t \right) \leq 2\exp\left( -\frac{Nt^2}{2\Vert f\Vert_\infty^2} \right).
\end{align*}
By applying the union bound to the event of a deviation $t$ between the empirical and true risks over all $\bfx \in \Xset$,
\begin{align}
\Proba\left( \sup_{\bfx\in \Xset} |\widehat{\mathbf{F}}_N(\bfx) - \mathbf{F}(\bfx)| \geq t \right) & \leq  \sum_{\bfx\in \Xset}\Proba\left( |\widehat{\mathbf{F}}_N(\bfx) - \mathbf{F}(\bfx)| \geq t \right)\label{eq:chain1event} \\
& \leq \sum_{\bfx\in \Xset} 2\exp\left( -\frac{Nt^2}{2\Vert f\Vert_\infty^2} \right) = 2 |\Xset| \exp\left( -\frac{Nt^2}{2\Vert f\Vert_\infty^2} \right) \nonumber.
\end{align}
We can derive the requirement to upper-bound the right-hand side by $\alpha$:
\begin{align*}
2|\Xset| \exp\left( -\frac{Nt^2}{2\Vert f\Vert_\infty^2} \right) \leq \alpha \quad \Rightarrow \quad t \geq \Vert f\Vert_\infty \sqrt{\frac{2\log\left(\Quotient{2|\Xset|}{\alpha}\right)}{N}}.
\end{align*}
Define the uniform convergence event:
\begin{align}
\mathcal{E} = \left\{\sup_{\bfx\in\Xset} |\widehat{\mathbf{F}}_N(\bfx) - \mathbf{F}(\bfx) | \leq \Vert f\Vert_\infty\sqrt{\frac{2\log\left(\Quotient{2|\Xset|}{\alpha}\right)}{N}}\right\}\label{eq:def_unifprobevent}
\end{align}
From complementing the event in the probability in \eqref{eq:chain1event}, we obtain $\Proba(\mathcal{E}) \geq 1-\alpha$.

Denote with $\bfx_{\mathrm{ERM}}$ a $\delta$-approximate optimizer of $\widehat{\mathbf{F}}_N$ over $\Xset$: $\widehat{\mathbf{F}}_N(\bfx_{\mathrm{ERM}}) \leq \delta + \min_{\bfx\in\Xset}\widehat{\mathbf{F}}_N(\bfx)$,
and with $\bfx^\star$ an optimizer of $\mathbf{F}$ over $\Xset$. We have, with probability $1 - \alpha$:
\begin{align}
\mathbf{F}(\bfx_{\mathrm{ERM}}) & \leq \widehat{\textbf{F}}_N(\bfx_{\mathrm{ERM}}) + \Vert f\Vert_\infty \sqrt{\frac{2\log\left(\Quotient{2|\Xset|}{\alpha}\right)}{N}} \label{eq:stepunif1} \\
& \leq \widehat{\mathbf{F}}_N(\bfx^\star) + \delta + \Vert f\Vert_\infty \sqrt{\frac{2\log\left(\Quotient{2|\Xset|}{\alpha}\right)}{N}} \label{eq:stepunif2} \\
& \leq\mathbf{F}(\bfx^\star) + \delta + 2 \Vert f\Vert_\infty\sqrt{\frac{2\log\left(\Quotient{2|\Xset|}{\alpha}\right)}{N}}. \label{eq:stepunif3}
\end{align}
Inequalities \eqref{eq:stepunif1} and \eqref{eq:stepunif3} are obtained from the uniform convergence event valid at
$\bfx_{\mathrm{ERM}}$ and $\bfx^\star$ respectively, and \eqref{eq:stepunif2} comes from $\bfx_{\mathrm{ERM}}$ being a minimizer of $\widehat{\mathbf{F}}_N$.
Rearranging provides \eqref{eq:generalizationbound} with probability $1-\alpha$.
\end{proof}

We now state a technical lemma bounding the change in expectation of a continuous function of a random variable under distributional shift.
The result is akin to the Kantorovich–Rubinstein theorem but without the requirement for the ground cost $c$ to be a metric; this is closely related, \eg, to the weak duality result of \citet{blanchet2019quantifying}.

\begin{Lemma}[Bounding deviations of expectations]\label{lemma:lipschitzineqbound}
Under \Cref{assump: continuity of the cost function for Wasserstein} and \Cref{assump: Xi is compact} (i), suppose $h:\Xi\to\R$ satisfies $|h(\xi) - h(\xi')| \leq \bfL_h \norm{\xi - \xi'}_2$ for all $\xi, \xi' \in \Xi$.
Then for any two distributions $\Q_1,\Q_2$ over $\Xi$:
\begin{align*}
\left|\E_{\xi\sim\Q_1}[h(\xi)] - \E_{\xi'\sim\Q_2}[h(\xi')]\right| \leq \frac{\bfL_h}{\sqrt{\mu_c}} \sqrt{ W_c(\Q_1,\Q_2)}.
\end{align*}
\end{Lemma}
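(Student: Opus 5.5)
The plan is a coupling argument: pick a near-optimal transport plan for $W_c(\Q_1,\Q_2)$, use the Lipschitz property of $h$ pointwise along that plan, and pass from $\Vert\xi-\xi'\Vert_2$ to $c(\xi,\xi')$ via Jensen's inequality and the lower bound in \Cref{assump: continuity of the cost function for Wasserstein}.

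\textbf{Integrability and coupling.} Because $\Xi$ is compact by \Cref{assump: Xi is compact} (i) and $h$ is Lipschitz, $h$ is bounded and continuous. Both expectations are therefore finite. Let $\pi$ be any coupling with marginals $[\pi]_1=\Q_1$ and $[\pi]_2=\Q_2$. Since $h$ depends only on one coordinate, the marginal conditions give
$$\E_{\Q_1}[h]-\E_{\Q_2}[h]=\int_{\Xi\times\Xi}\left(h(\xi)-h(\xi')\right)\mathrm{d}\pi(\xi,\xi').$$

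\textbf{Bounding the difference along the coupling.} Taking absolute values and applying the Lipschitz bound gives
$$\left|\E_{\Q_1}[h]-\E_{\Q_2}[h]\right|\leq \bfL_h\int\Vert\xi-\xi'\Vert_2\,\mathrm{d}\pi.$$
By Jensen's inequality (or Cauchy–Schwarz) for the probability measure $\pi$,
$$\int\Vert\xi-\xi'\Vert_2\,\mathrm{d}\pi\leq\left(\int\Vert\xi-\xi'\Vert_2^2\,\mathrm{d}\pi\right)^{1/2}.$$
The lower comparison $\mu_c\Vert\xi-\xi'\Vert_2^2\leq c(\xi,\xi')$ then bounds the right-hand side by
$$\mu_c^{-1/2}\left(\int c\,\mathrm{d}\pi\right)^{1/2}.$$

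\textbf{Passing to the Wasserstein cost.} Combining the two steps, we have for every coupling $\pi$
$$\left|\E_{\Q_1}[h]-\E_{\Q_2}[h]\right|\leq\frac{\bfL_h}{\sqrt{\mu_c}}\left(\int c\,\mathrm{d}\pi\right)^{1/2}.$$
The left-hand side does not depend on $\pi$, and $t\mapsto\sqrt t$ is continuous and nondecreasing. Taking the infimum over couplings therefore yields $\frac{\bfL_h}{\sqrt{\mu_c}}\sqrt{W_c(\Q_1,\Q_2)}$, as defined in \Cref{eq: wasserstein distance}. This avoids needing existence of an optimal plan. An optimal plan does in fact exist, since $c$ is continuous on the compact $\Xi\times\Xi$ and the set of couplings is weakly compact, but that fact is not required.

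\textbf{Main obstacle.} The only delicate point is the squared-versus-linear mismatch: $h$ is Lipschitz in the norm, while $c$ behaves like a squared norm. The Jensen step is what produces the square root. Note that the upper constant $\bfL_c$ is not needed, and the statement is empty when $W_c=+\infty$. It cannot be infinite here anyway, because $c$ is bounded on the compact $\Xi\times\Xi$.
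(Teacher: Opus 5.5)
Your proof is correct and follows essentially the same route as the paper's: a coupling, the pointwise Lipschitz bound, Jensen's inequality, the lower cost comparison $\mu_c\Vert\xi-\xi'\Vert_2^2\leq c(\xi,\xi')$, and then the infimum over couplings using monotonicity of the square root. One small point: your side remark that an optimal plan exists because $c$ is continuous is not justified, since the ground-cost assumption only sandwiches $c$ between multiples of $\Vert\xi-\zeta\Vert_2^2$ and does not assert continuity. As you note, that fact is never used, so the argument is unaffected.
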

\begin{proof}
Let $\pi$ be a coupling, \ie a joint distribution over $\Xi \times \Xi$ with marginals $\Q_1$ and $\Q_2$, then
\begin{align*}
|\E_{\xi \sim \Q_1}[h(\xi)] - \E_{\xi'\sim\Q_2}[h(\xi')]| = |\E_{(\xi,\xi')\sim\pi}[h(\xi) - h(\xi')]| \leq \E_{(\xi,\xi')\sim\pi}[|h(\xi) - h(\xi')|] \leq \bfL_h \E_{(\xi,\xi')\sim\pi}[ \norm{\xi - \xi'}_2 ].
\end{align*}
Using Jensen's inequality, $\E_{(\xi,\xi')\sim\pi}[ \norm{\xi - \xi'}_2 ] \leq \sqrt{\E_{(\xi,\xi')\sim\pi}[ \norm{\xi - \xi'}_2^2 ]}$.
Combining with the lower bound on the cost from \cref{assump: continuity of the cost function for Wasserstein}, we obtain
\begin{align*}
|\E_{\xi \sim \Q_1}[h(\xi)] - \E_{\xi'\sim\Q_2}[h(\xi')]| \leq \frac{\bfL_h}{\sqrt{\mu_c}} \sqrt{\E_{(\xi,\xi')\sim\pi}[ c(\xi, \xi') ]}.
\end{align*}
The left-hand side does not depend of the chosen coupling $\pi$; taking the infimum of the right-hand side over possible couplings, and using the fact that $\sqrt{\cdot}$ is nondecreasing, we have
\begin{align*}
\inf_{\pi} \sqrt{\E[c]} = \sqrt{\inf_{\pi} \E_{\pi}[c]} = \sqrt{W_c(\Q_1, \Q_2)},
\end{align*}
concluding the proof.
\end{proof}

We can now extend the generalization bound to a performance guarantee of ERM under distributional shift.
For simplicity of exposition, we will consider $\bfx_{\mathrm{ERM}}$, the exact minimizer of the ERM problem, i.e., with $\delta=0$ following the notation of \cref{thm:ermgeneralize}.
\begin{Theorem}[ERM robustness to distribution shift]\label{thm:erm_shift}
Retaining the notation of \Cref{thm:ermgeneralize}, suppose \Cref{assump: Xi is compact}(i) and \Cref{assump: continuity of the cost function for Wasserstein} hold, and let $\bfL_f$ be the Lipschitz constant of $f(\bfx,\cdot)$ valid at all $\bfx \in \Xset$:
\begin{align*}
    |f(\bfx,\xi) - f(\bfx,\xi')| \leq \bfL_f \norm{\xi - \xi'}_2 \quad \forall \xi, \xi' \in \Xi, \bfx \in \Xset.
\end{align*}
Let $\Q$ be a distribution over $\Xi$ such that for some $\varrho > 0$, $W_c(\Proba,\Q) \leq \varrho$.
Define $\mathbf{F}_{\Q}(\bfx)$, $\mathbf{F}(\bfx)$ as the risk of $\bfx$ under distribution $\Q$, $\Proba$ respectively and $\bfx_{\Q} \in \argmin_{\bfx\in\Xset}\mathbf{F}_{\Q}(\bfx)$.
For any $\alpha \in (0, 1)$, with probability at least $1-\alpha$, we have:
\begin{align}
\mathbf{F}_{\Q}(\bfx_{\mathrm{ERM}}) - \mathbf{F}_{\Q}(\bfx_{\Q}) \leq 2\Vert f\Vert_\infty\sqrt{\frac{2\log\left(\Quotient{2|\Xset|}{\alpha}\right)}{N}} + 2\bfL_f\sqrt{ \frac{\varrho}{\mu_c} }.
\end{align}
\end{Theorem}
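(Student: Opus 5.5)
The plan is a three-term telescoping decomposition that isolates the sampling error from the distribution-shift error. We write
\[
\mathbf{F}_{\Q}(\bfx_{\mathrm{ERM}}) - \mathbf{F}_{\Q}(\bfx_{\Q})
= \big[\mathbf{F}_{\Q}(\bfx_{\mathrm{ERM}}) - \mathbf{F}(\bfx_{\mathrm{ERM}})\big]
+ \big[\mathbf{F}(\bfx_{\mathrm{ERM}}) - \mathbf{F}(\bfx_{\Q})\big]
+ \big[\mathbf{F}(\bfx_{\Q}) - \mathbf{F}_{\Q}(\bfx_{\Q})\big],
\]
and bound the two outer (shift) terms deterministically and the middle (generalization) term on a high-probability event.

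For the shift terms we apply \Cref{lemma:lipschitzineqbound} with $h\eqdef f(\bfx,\cdot)$ at the fixed decisions $\bfx=\bfx_{\mathrm{ERM}}$ and $\bfx=\bfx_{\Q}$. The Lipschitz constant $\bfL_f$ is uniform in $\bfx$ by hypothesis. Since $W_c(\Proba,\Q)\leq\varrho$, each of the two terms is bounded in absolute value by $\bfL_f\sqrt{\varrho/\mu_c}$. This holds for every realization of the data, even though $\bfx_{\mathrm{ERM}}$ is random, because the lemma is applied pointwise in $\bfx$ and the distributions $\Proba,\Q$ do not depend on the sample. Together these give the $2\bfL_f\sqrt{\varrho/\mu_c}$ contribution.

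For the middle term, the key observation is that $\bfx_{\Q}\in\Xset$ is deterministic. We work on the uniform convergence event $\mathcal{E}$ from \eqref{eq:def_unifprobevent} in the proof of \Cref{thm:ermgeneralize}, which has probability at least $1-\alpha$. On $\mathcal{E}$, we chain three inequalities:
\[
\mathbf{F}(\bfx_{\mathrm{ERM}}) \leq \widehat{\mathbf{F}}_N(\bfx_{\mathrm{ERM}})+t \leq \widehat{\mathbf{F}}_N(\bfx_{\Q})+t \leq \mathbf{F}(\bfx_{\Q})+2t,
\qquad t\eqdef\Vert f\Vert_\infty\sqrt{\tfrac{2\log(2|\Xset|/\alpha)}{N}}.
\]
The first and last steps use the uniform bound on $\mathcal{E}$, and the middle step uses the exact optimality of $\bfx_{\mathrm{ERM}}$ (we take $\delta=0$). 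This is precisely the argument of \eqref{eq:stepunif1}--\eqref{eq:stepunif3}, with $\bfx^\star$ replaced by $\bfx_{\Q}$, and it is valid because the event $\mathcal{E}$ is uniform over all of $\Xset$. Adding the three bounds gives the claim.

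The main subtlety is the middle term, and the care needed is in reusing the \emph{uniform} event $\mathcal{E}$ rather than an event depending on $\bfx^\star$. A simpler alternative would be to invoke \Cref{thm:ermgeneralize} directly, giving $\mathbf{F}(\bfx_{\mathrm{ERM}})\leq\mathbf{F}^\star+2t$, and then use $\mathbf{F}^\star\leq\mathbf{F}(\bfx_{\Q})$, since $\mathbf{F}^\star$ is the minimum of $\mathbf{F}$ over $\Xset$. This route gives the same bound directly. No continuity in $\bfx$ is needed, since $\Xset$ is finite and all infima are attained.
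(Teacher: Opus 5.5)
Your proposal is correct and essentially identical to the paper's proof, which chains the same five inequalities: the shift lemma at $\bfx_{\mathrm{ERM}}$, the uniform event $\mathcal{E}$ at $\bfx_{\mathrm{ERM}}$, exact optimality of $\bfx_{\mathrm{ERM}}$, $\mathcal{E}$ at the deterministic point $\bfx_{\Q}$, and the shift lemma at $\bfx_{\Q}$. Your shortcut through \Cref{thm:ermgeneralize} (with $\delta=0$) combined with $\mathbf{F}^\star\leq\mathbf{F}(\bfx_{\Q})$ is also valid and gives the same bound.
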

\begin{proof}
For any fixed $\bfx\in\Xset$, applying \Cref{lemma:lipschitzineqbound} with distributions $\Q,\Proba$ with $W_c(\Proba,\Q) \leq \varrho$
and $h = f(\bfx, \cdot)$,
\begin{align}
|\mathbf{F}(\bfx) - \mathbf{F}_{\Q}(\bfx)| = |\E_{\xi'\sim\Proba}[f(\bfx,\xi')] - \E_{\xi \sim\Q}[f(\bfx,\xi)]| \leq \bfL_f \sqrt{\frac{W_c(\Proba,\Q)}{\mu_c}}  \leq \bfL_f \sqrt{\frac{\varrho}{\mu_c}} \quad \forall \bfx\in \Xset.\label{eq:boundfromlemma}
\end{align}

Using the uniform convergence event $\mathcal{E}$ from \Cref{thm:ermgeneralize}:
\begin{align}
\mathcal{E} = \left\{\sup_{\bfx\in\Xset} |\widehat{\mathbf{F}}_N(\bfx) - \mathbf{F}(\bfx) | \leq \Vert f\Vert_\infty\sqrt{\frac{2\log\left(\Quotient{2|\Xset|}{\alpha}\right)}{N}} \right\}
\end{align}
holding with probability $1-\alpha$, we can derive:
\begin{subequations}
\begin{align}
\mathbf{F}_{\Q}(\bfx_{\mathrm{ERM}}) \leq & \mathbf{F}(\bfx_{\mathrm{ERM}}) + \bfL_f \sqrt{\frac{\varrho}{\mu_c}}  \label{eq:chainwasserstein_1} \\
\leq &\widehat{\mathbf{F}}_N(\bfx_{\mathrm{ERM}}) + \Vert f\Vert_\infty\sqrt{\frac{2\log\left(\Quotient{2|\Xset|}{\alpha}\right)}{N}} + \bfL_f \sqrt{\frac{\varrho}{\mu_c}} \label{eq:chainwasserstein_2} \\
\leq & \widehat{\mathbf{F}}_N(\bfx_{\Q}) + \Vert f\Vert_\infty\sqrt{\frac{2\log\left(\Quotient{2 |\Xset|}{\alpha}\right)}{N}} + \bfL_f \sqrt{\frac{\varrho}{\mu_c}} \label{eq:chainwasserstein_3} \\
\leq & \mathbf{F}(\bfx_{\Q}) + 2\Vert f\Vert_\infty\sqrt{\frac{2\log\left(\Quotient{2 |\Xset|}{\alpha}\right)}{N}} + \bfL_f \sqrt{\frac{\varrho}{\mu_c}} \label{eq:chainwasserstein_4} \\
\leq & \mathbf{F}_{\Q}(\bfx_{\Q}) + 2\Vert f\Vert_\infty\sqrt{\frac{2\log\left(\Quotient{2 |\Xset|}{\alpha}\right)}{N}} + 2 \bfL_f \sqrt{\frac{\varrho}{\mu_c}} \label{eq:chainwasserstein_5}
\end{align}
\end{subequations}
where \eqref{eq:chainwasserstein_1} comes from applying \eqref{eq:boundfromlemma} at $\bfx_{\mathrm{ERM}}$,
\eqref{eq:chainwasserstein_2} uses the uniform convergence event bound holding at $\bfx_{\mathrm{ERM}}$ for $\Proba$,
\eqref{eq:chainwasserstein_3} uses optimality of $\bfx_{\mathrm{ERM}}$ for $\widehat{\mathbf{F}}_{N}$,
\eqref{eq:chainwasserstein_4} applies the uniform convergence event bound, this time at $\bfx_{\Q}$,
and \eqref{eq:chainwasserstein_5} uses \eqref{eq:boundfromlemma} a second time at $\bfx_{\Q}$.
\end{proof}

The key take-away of \Cref{thm:ermgeneralize} and \Cref{thm:erm_shift} is that if the number of vertices of the feasible region
remains polynomial in the dimension $n$ with maximum degree $k$, we obtain a generalization bounded by a quantity
$\complexity{\sqrt{k\log(n)} N^{-1/2}}$.
This analysis heavily relies on the finiteness of $\Xset$ and on the union bound, and it leaves open the question of a similar bound for mixed-integer sets.

\section{Supporting lemmas}\label{app: technical lemmas}

For the sake of completeness, we provide proofs of elementary lemmas used in the convergence analysis.

\begin{Lemma}\label{Lemma: mini-batch unbiasedness}
Let $x_1,\dots,x_N\in\R^d$. Let $\mathscr{J}\subseteq\llbracket N\rrbracket$ be a random subset of size $b$, chosen uniformly and independently among all subsets of size $b$. Then, the empirical average over $\mathscr{J}$ is an unbiased estimator of the average over $\llbracket N\rrbracket$:
$$\E\left[\frac{1}{|\mathscr{J}|}\sum_{k\in\mathscr{J}} x_k\right] = \frac{1}{N} \sum_{k=1}^N x_k.$$
\end{Lemma}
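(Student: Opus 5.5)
The plan is to prove the identity by linearity of expectation, after rewriting the random sum with indicator variables. For each $k\in\llbracket N\rrbracket$ let $\mathbb{1}_{\{k\in\mathscr{J}\}}$ be the indicator that index $k$ is selected. Since $|\mathscr{J}|=b$ is deterministic, we have
\[
\frac{1}{|\mathscr{J}|}\sum_{k\in\mathscr{J}}x_k=\frac{1}{b}\sum_{k=1}^N x_k\,\mathbb{1}_{\{k\in\mathscr{J}\}} .
\]
This turns a sum over a random index set into a fixed finite sum of deterministic vectors times random scalars. Taking expectations componentwise in $\R^d$ and using linearity then gives $\frac{1}{b}\sum_{k=1}^N x_k\,\Proba(k\in\mathscr{J})$.

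It therefore remains to compute the inclusion probability $\Proba(k\in\mathscr{J})$. Under uniform sampling among all $\binom{N}{b}$ subsets of size $b$, the subsets containing a fixed $k$ are in bijection with the $(b-1)$-subsets of the remaining $N-1$ indices. Hence
\[
\Proba(k\in\mathscr{J})=\binom{N-1}{b-1}\Big/\binom{N}{b}=\frac{b}{N}.
\]
A symmetry argument gives the same value directly: the probability is the same for every $k$, and $\sum_k\Proba(k\in\mathscr{J})=\E|\mathscr{J}|=b$.

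Substituting this probability, the factor $\frac{1}{b}\cdot\frac{b}{N}=\frac{1}{N}$ appears and yields the claim. There is no real obstacle; the only point needing care is that $|\mathscr{J}|=b$ almost surely, so the normalization can be pulled out of the expectation. This is where the fixed-size, uniform assumption is used, and the same argument would fail for a random batch size.
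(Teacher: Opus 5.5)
Your proof is correct and follows essentially the same route as the paper: rewrite the average with indicators using $|\mathscr{J}|=b$, apply linearity, and compute $\Proba(k\in\mathscr{J})=\binom{N-1}{b-1}/\binom{N}{b}=b/N$. Your additional symmetry argument for the inclusion probability is a valid alternative check.
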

\begin{proof}
For any $\mathscr{J}\subseteq\llbracket N\rrbracket$, since $\mathscr{J}$ has a known size $b$, we can write $\frac{1}{|\mathscr{J}|}\displaystyle\sum_{k\in\mathscr{J}} x_k$ as $\dfrac{1}{b}\displaystyle\sum^N_{k=1} x_k\mathbb{1}_{k\in\mathscr{J}}$, thus giving, by linearity:
$$\E\left[\frac{1}{b}\sum_{k=1}^N x_k\mathbb{1}_{k\in\mathscr{J}}\right]=\frac{1}{b}\sum_{k=1}^N x_k\E\left[\mathbb{1}_{k\in\mathscr{J}}\right]=\frac{1}{b}\sum_{k=1}^N x_k\Proba\left(k\in\mathscr{J}\right)=\frac{1}{b}\sum_{k=1}^N x_k\dfrac{\binom{N-1}{b-1}}{\binom{N}{b}}=\frac{1}{b}\sum_{k=1}^N x_k\dfrac{b}{N}=\frac{1}{N}\sum_{k=1}^N x_k.$$
\end{proof}

\begin{Lemma}\label{lemma: log-sum-exp lowers the function}
Let \Cref{assump: Xi is compact}(i) hold, let $\Q\in\mathrm{Proba}(\Xi)$ and $g:\Xi\rightarrow\R$ a bounded $\Q$-measurable function. Then
$$\E_{\zeta\sim\Q^g}\left[g(\zeta)\right]\geq\log\left(\E_{\zeta\sim\Q}\left[\exp\left(g(\zeta)\right)\right]\right)$$
where $\Q^g$ is the distribution defined by $\mathrm{d}\Q^g(\zeta)\propto\exp\left(g(\zeta)\right)\mathrm{d}\Q(\zeta)$.
\end{Lemma}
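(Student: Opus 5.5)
We will prove the inequality by computing the Kullback--Leibler divergence between $\Q^g$ and $\Q$ explicitly and using its nonnegativity (Gibbs' inequality).

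Since $\Xi$ is compact (\Cref{assump: Xi is compact}(i)) and $g$ is bounded, say $|g|\leq M$, we have $e^{-M}\leq\exp(g)\leq e^{M}$ on $\Xi$. Hence the normalizing constant
$$Z\eqdef\E_{\zeta\sim\Q}\left[\exp\left(g(\zeta)\right)\right]$$
is finite and strictly positive. The tilted measure $\Q^g$ is therefore well defined, with density
$$\frac{\mathrm{d}\Q^g}{\mathrm{d}\Q}(\zeta)=\frac{\exp(g(\zeta))}{Z}.$$
This density is bounded above and below by positive constants, so $\Q^g$ and $\Q$ are mutually absolutely continuous. Moreover, $\log\frac{\mathrm{d}\Q^g}{\mathrm{d}\Q}=g-\log Z$ is bounded, so all the integrals below are finite.

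Next we compute the divergence directly:
$$\mathrm{KL}(\Q^g\,\|\,\Q)=\E_{\zeta\sim\Q^g}\left[\log\frac{\mathrm{d}\Q^g}{\mathrm{d}\Q}(\zeta)\right]=\E_{\zeta\sim\Q^g}\left[g(\zeta)\right]-\log Z.$$
The inequality to prove is exactly $\mathrm{KL}(\Q^g\,\|\,\Q)\geq0$.

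For a self-contained argument, we derive this from Jensen's inequality applied to the concave function $\log$, under the measure $\Q^g$:
$$-\mathrm{KL}(\Q^g\,\|\,\Q)=\E_{\Q^g}\left[\log\frac{\mathrm{d}\Q}{\mathrm{d}\Q^g}\right]\leq\log\E_{\Q^g}\left[\frac{\mathrm{d}\Q}{\mathrm{d}\Q^g}\right]=\log 1=0.$$
The identity $\E_{\Q^g}\left[\frac{\mathrm{d}\Q}{\mathrm{d}\Q^g}\right]=1$ uses the mutual absolute continuity established above.

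Rearranging $\E_{\zeta\sim\Q^g}[g(\zeta)]-\log Z\geq0$ gives the claimed bound.

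The only delicate point is the well-definedness of all quantities: finiteness and positivity of $Z$, and the equivalence of $\Q^g$ and $\Q$. Both follow from the boundedness of $g$, as shown in the first step. The inequality itself is then immediate.
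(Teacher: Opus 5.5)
Your proof is correct, but it takes a different route from the paper. The paper works with the cumulant generating function $\Lambda(t)\eqdef\log\E_{\zeta\sim\Q}[\exp(tg(\zeta))]$. It asserts that $\Lambda$ is convex and differentiable with $\Lambda'(t)=\E_{\zeta\sim\Q^{tg}}[g(\zeta)]$, and evaluates the tangent inequality $\Lambda(t)\geq\Lambda(1)+(t-1)\Lambda'(1)$ at $t=0$, where $\Lambda(0)=0$. You instead identify the gap $\E_{\Q^g}[g]-\log\E_{\Q}[e^{g}]$ exactly as $\mathrm{KL}(\Q^g\,\|\,\Q)$ and show it is nonnegative with a single application of Jensen's inequality.

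The two arguments are really the same identity seen from two sides. The tangent-line gap $\Lambda(0)-\Lambda(1)-(0-1)\Lambda'(1)$ is the Bregman divergence of the log-partition function of the exponential family $(\Q^{tg})_{t\in\R}$, and that Bregman divergence equals $\mathrm{KL}(\Q^{g}\,\|\,\Q)$.

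Your version is more elementary. It needs neither differentiation under the integral sign nor the convexity of $\Lambda$ (via H\"older), both of which the paper takes for granted. It also gives the equality case for free: equality holds iff $g$ is $\Q$-a.s. constant. The paper's version fits its surrounding analysis, where the same log-sum-exp functions and their tilted-expectation derivatives drive \Cref{prop: F is convex} and \Cref{prop: upper bound on lambda max}. It also yields the whole family of tangent bounds at every $t$, not only at $t=0$.

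A minor point: compactness of $\Xi$ plays no role in your argument, nor in the paper's. Boundedness of $g$ alone gives $0<Z<\infty$ and the mutual absolute continuity of $\Q^g$ and $\Q$.
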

\begin{proof}
Consider the function $t\mapsto\log\left(\E_{\zeta\sim\Q}\left[\exp\left(tg(\zeta)\right)\right]\right)$. This function is convex and differentiable with derivative
$$\partial_t\log\left(\E_{\zeta\sim\Q}\left[\exp\left(tg(\zeta)\right)\right]\right)=\dfrac{\E_{\zeta\sim\Q}\left[g(\zeta)\exp\left(tg(\zeta)\right)\right]}{\E_{\zeta\sim\Q}\left[\exp\left(tg(\zeta)\right)\right]}=\E_{\zeta\sim\Q^{tg}}\left[g(\zeta)\right]$$
with $\mathrm{d}\Q^{tg}(\zeta)\propto\exp\left(tg(\zeta)\right)\mathrm{d}\Q(\zeta)$.
The function is thus above its tangent at $1$:
$$\log\left(\E_{\zeta\sim\Q}\left[\exp\left(tg(\zeta)\right)\right]\right)\geq\E_{\zeta\sim\Q^{g}}\left[g(\zeta)\right](t-1)+\log\left(\E_{\zeta\sim\Q}\left[\exp\left(g(\zeta)\right)\right]\right)\qquad\forall t\in\R.$$
Applying the inequality at $t=0$ provides the desired inequality.
\end{proof}
\end{document}